\theoremstyle{plain}
\newtheorem{thm}{\protect\theoremname}
\theoremstyle{plain}
\newtheorem{lem}[thm]{\protect\lemmaname}
\theoremstyle{definition}
\newtheorem{defn}[thm]{\protect\definitionname}
\theoremstyle{plain}
\newtheorem{cor}[thm]{\protect\corollaryname}
\newcommand{\suppress}[1]{}
\DeclareMathOperator{\re}{Re}
\providecommand{\corollaryname}{Corollary}
\providecommand{\definitionname}{Definition}
\providecommand{\lemmaname}{Lemma}
\providecommand{\theoremname}{Theorem}
\providecommand{\corollaryname}{Corollary}
\providecommand{\definitionname}{Definition}
\providecommand{\lemmaname}{Lemma}
\providecommand{\theoremname}{Theorem}
\begin{document}
\title{Edge Expansion and Spectral Gap of Nonnegative Matrices}
\author{Jenish C. Mehta \\ California Institute of Technology \\ jenishc@gmail.com \and Leonard J. Schulman \\ California Institute of Technology \\ schulman@caltech.edu}
\maketitle
\begin{abstract}
The classic graphical Cheeger inequalities state that if $M$ is an
$n\times n$ \emph{symmetric} doubly stochastic matrix, then 
\[
\frac{1-\lambda_{2}(M)}{2}\leq\phi(M)\leq\sqrt{2\cdot(1-\lambda_{2}(M))}
\]
where $\phi(M)=\min_{S\subseteq[n],|S|\leq n/2}\left(\frac{1}{|S|}\sum_{i\in S,j\not\in S}M_{i,j}\right)$
is the edge expansion of $M$, and $\lambda_{2}(M)$ is the second
largest eigenvalue of $M$. We study the relationship between $\phi(A)$
and the spectral gap $1-\re\lambda_{2}(A)$ for \emph{any} doubly
stochastic matrix $A$ (not necessarily symmetric), where $\lambda_{2}(A)$
is a nontrivial eigenvalue of $A$ with maximum real part. Fiedler
showed that the upper bound on $\phi(A)$ is unaffected, i.e., $\phi(A)\leq\sqrt{2\cdot(1-\re\lambda_{2}(A))}$.
With regards to the lower bound on $\phi(A)$, there are known constructions
with 
\[
\phi(A)\in\Theta\left(\frac{1-\re\lambda_{2}(A)}{\log n}\right),
\]
indicating that at least a mild dependence on $n$ is necessary to
lower bound $\phi(A)$.

In our first result, we provide an \emph{exponentially} better construction
of $n\times n$ doubly stochastic matrices $A_{n}$, for which

\[
\phi(A_{n})\leq\frac{1-\re\lambda_{2}(A_{n})}{\sqrt{n}}.
\]
In fact, \emph{all} nontrivial eigenvalues of our matrices are $0$,
even though the matrices are highly \emph{nonexpanding}. We further
show that this bound is in the correct range (up to the exponent of
$n$), by showing that for any doubly stochastic matrix $A$, 
\[
\phi(A)\geq\frac{1-\re\lambda_{2}(A)}{35\cdot n}.
\]
As a consequence, unlike the symmetric case, there is a (necessary)
loss of a factor of $n^{\alpha}$ for $\frac{1}{2}\leq\alpha\leq1$
in lower bounding $\phi$ by the spectral gap in the nonsymmetric
setting.

Our second result extends these bounds to general matrices $R$ with
nonnegative entries, to obtain a two-sided \emph{gapped} refinement
of the Perron-Frobenius theorem. Recall from the Perron-Frobenius
theorem that for such $R$, there is a nonnegative eigenvalue $r$
such that all eigenvalues of $R$ lie within the closed disk of radius
$r$ about $0$. Further, if $R$ is irreducible, which means $\phi(R)>0$
(for suitably defined $\phi$), then $r$ is positive and all other
eigenvalues lie within the \textit{open} disk, so (with eigenvalues
sorted by real part), $\re\lambda_{2}(R)<r$. An extension of Fiedler's
result provides an upper bound and our result provides the corresponding
lower bound on $\phi(R)$ in terms of $r-\re\lambda_{2}(R)$, obtaining
a two-sided quantitative version of the Perron-Frobenius theorem. 
\end{abstract}
\pagebreak{}

\section{Introduction}

\subsection{Motivation and main result}

We study the relationship between edge expansion and second eigenvalue
of nonnegative matrices. We restrict to doubly stochastic matrices
for exposition in the introduction, since the definitions are simpler
and it captures most of the key ideas. The extension to general nonnegative
matrices is treated in Section \ref{subsec:genl-nonneg}. Let $A$
be an $n\times n$ doubly stochastic matrix, equivalently interpreted
as a bi-regular weighted digraph. The \emph{edge expansion }of $A$,
denoted as $\phi(A)$, is defined as

\[
\phi(A)=\min_{S\subseteq[n],|S|\leq n/2}\frac{\sum\limits _{i\in S,j\not\in S}A_{i,j}}{|S|}.
\]
The fact that $A$ is doubly stochastic implies that $\phi(A)=\phi(A^{T})$.
$\phi$ is a measure of \emph{how much} the graph would have to be
modified for it to lose strong-connectedness; it also lower bounds
\emph{how frequently}, in steady state, the associated Markov chain
switches between the blocks of any bi-partition; thus, it fundamentally
expresses the \emph{extent} to which the graph is connected.

The connection between edge expansion and the second eigenvalue has
been of central importance in the case of \emph{symmetric} doubly
stochastic matrices $M$ (equivalently, reversible Markov chains with
uniform stationary distribution). For such $M$, let the eigenvalues
be $1=\lambda_{1}\geq\lambda_{2}\geq\ldots\geq\lambda_{n}\geq-1$.
The Cheeger inequalities give two-sided bounds between the edge expansion
$\phi(M)$ and the \emph{spectral gap} $1-\lambda_{2}(M)$. 
\begin{thm}
\emph{\label{thm:cheeg}\cite{dodziuk84,AlonM85,alon86}} \emph{(Cheeger's
inequalities for symmetric doubly stochastic matrices)} Let $M$ be
a \emph{symmetric} doubly stochastic matrix, then 
\[
\frac{1-\lambda_{2}(M)}{2}\ \overset{}{\leq}\ \phi(M)\ \overset{}{\leq}\ \sqrt{2\cdot(1-\lambda_{2}(M))}.
\]
\end{thm}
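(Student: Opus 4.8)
The plan is to prove the two inequalities separately, working throughout with the Laplacian $L = I - M$, whose quadratic form is $x^{\top}Lx = \tfrac{1}{2}\sum_{i,j}M_{i,j}(x_i-x_j)^2$, and with the variational characterization $1-\lambda_{2}(M) = \min_{x\perp\vo} x^{\top}Lx/\|x\|^{2}$ (valid because $M$ symmetric gives real eigenvalues and a real eigenvector for $\lambda_{2}$ orthogonal to $\vo$). Write $\gamma = 1-\lambda_{2}(M)$ and, for $S\subseteq[n]$, $\partial S = \sum_{i\in S, j\notin S}M_{i,j}$, so $\phi(M)=\min_{|S|\le n/2}\partial S/|S|$.

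\noindent\textbf{Lower bound on $\phi(M)$.} Given $S$ with $s:=|S|\le n/2$, I would test the variational formula with $x=\vo_{S}-(s/n)\vo$, which is orthogonal to $\vo$. A direct computation gives $x^{\top}Lx=\partial S$ (since $(x_i-x_j)^2$ is $1$ when exactly one of $i,j$ lies in $S$ and $0$ otherwise) and $\|x\|^{2}=s(1-s/n)\ge s/2$ (using $s\le n/2$). Hence $\gamma\le \partial S/(s/2)=2\,\partial S/s$, and minimizing over $S$ yields $\gamma/2\le\phi(M)$.

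\noindent\textbf{Upper bound on $\phi(M)$ (the rounding step, and the main obstacle).} Let $v$ be an eigenvector for $\lambda_{2}$, normalized (replacing $v$ by $-v$ if needed) to have at most $n/2$ strictly positive coordinates; let $x=v^{+}$ be its positive part, so $x\ge 0$, $x\neq 0$ (as $v\perp\vo$, $v\neq 0$), and $|\operatorname{supp}(x)|\le n/2$. The first key point is the truncation inequality $x^{\top}Lx\le\gamma\|x\|^{2}$: for each $i$ with $x_i>0$ we have $((I-M)x)_i = x_i - \sum_j M_{i,j}v_j^{+} \le x_i - \sum_j M_{i,j}v_j = ((I-M)v)_i = \gamma v_i$, and multiplying by $x_i=v_i>0$ and summing gives the claim (this is the step where symmetry of $M$ is really used, so that $L$ is symmetric and the one-sided sign argument suffices). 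Next, for a threshold $t$ set $S_t=\{i: x_i>t\}$, which always has size $\le n/2$, and choose $t$ at random so that $t^{2}$ is uniform on $[0,\max_i x_i^{2}]$. Then $\mathbb{E}\,|S_t| = \|x\|^{2}/\max_i x_i^{2}$ and $\mathbb{E}\,\partial S_t = \tfrac{1}{2\max_i x_i^{2}}\sum_{i,j}M_{i,j}|x_i-x_j|(x_i+x_j)$. Applying Cauchy--Schwarz to this last sum, and bounding $\sum_{i,j}M_{i,j}(x_i-x_j)^{2}=2x^{\top}Lx\le 2\gamma\|x\|^{2}$ and $\sum_{i,j}M_{i,j}(x_i+x_j)^{2}\le 2\sum_{i,j}M_{i,j}(x_i^{2}+x_j^{2})=4\|x\|^{2}$ (the last equality using that rows and columns of $M$ sum to $1$), gives $\mathbb{E}\,\partial S_t\le\sqrt{2\gamma}\;\mathbb{E}\,|S_t|$.

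Therefore there is a threshold $t^{\ast}$ with $\partial S_{t^{\ast}}\le\sqrt{2\gamma}\,|S_{t^{\ast}}|$; since $t^{\ast}<\max_i x_i$ with probability one, $S_{t^{\ast}}$ is nonempty, and with $|S_{t^{\ast}}|\le n/2$ this yields $\phi(M)\le\partial S_{t^{\ast}}/|S_{t^{\ast}}|\le\sqrt{2\gamma}=\sqrt{2(1-\lambda_{2}(M))}$. The delicate parts are precisely the truncation inequality and the Cauchy--Schwarz/co-area estimate above; the remaining manipulations (the expectation computations, the convexity bound $(x_i+x_j)^{2}\le 2(x_i^{2}+x_j^{2})$, and verifying nonemptiness of the level set) are routine.
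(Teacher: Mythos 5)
The paper does not give its own proof of Theorem~\ref{thm:cheeg}; it states the result and attributes it to the cited literature (Dodziuk, Alon--Milman, Alon), so there is no internal proof to compare against. Your proof is a correct and self-contained argument along the standard lines: the easy direction is the Rayleigh-quotient test with $x=\vo_S-(s/n)\vo$, and the hard direction is the truncation inequality for $v^{+}$ together with the random-threshold (co-area) rounding and Cauchy--Schwarz. I checked the delicate steps: the truncation bound $x^{\top}Lx\le\gamma\|x\|^2$ is correctly justified by $v_j^{+}\ge v_j$ and nonnegativity of $M$; the identity $\mathbb{E}\,\partial S_t=\frac{1}{2\max_i x_i^2}\sum_{i,j}M_{i,j}|x_i-x_j|(x_i+x_j)$ correctly uses symmetry of $M$; the normalization $\sum_{i,j}M_{i,j}(x_i+x_j)^2\le 4\|x\|^2$ correctly uses double stochasticity; the sign normalization of $v$ is legitimate because its positive and negative supports are disjoint so one of them has size at most $n/2$; and $v^{+}\neq 0$ because $v\perp\vo$ forces $v$ to have entries of both signs. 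The only degenerate case is $\lambda_2(M)=1$ (disconnected graph), where one must pick the $\lambda_2$-eigenvector inside the $1$-eigenspace orthogonal to $\vo$; this is a measure-zero wrinkle and your argument still yields $\phi(M)=0$ there. So the proof is correct, but since the paper simply cites this theorem, your contribution is a valid proof of a classical result rather than an alternative to something the paper proves.
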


This is closely related to earlier versions for Riemannian manifolds
\cite{cheeger70,Buser82}. Notably, the inequalities in Theorem \ref{thm:cheeg}
do not depend on $n$. Further, they are tight up to constants --
the upper bound on $\phi$ is achieved by the cycle and the lower
bound by the hypercube.

The key question we address in this work is whether or to what extent
the Cheeger inequalities survive for \textit{nonsymmetric} doubly
stochastic matrices (the question was already asked, for e.g., in
\cite{MonT06}). Let $A$ be a doubly stochastic matrix, not necessarily
symmetric. The eigenvalues of $A$ lie in the unit disk around the
origin in the complex plane, with an eigenvalue $1$ called the \emph{trivial}
or \emph{stochastic} eigenvalue corresponding to the eigenvector $\mathbf{1}$
(the all $1$'s vector), and all other eigenvalues considered \emph{nontrivial}.
Let the eigenvalues of $A$ be ordered so that $1=\lambda_{1}\geq\re\lambda_{2}\geq\ldots\geq\re\lambda_{n}\geq-1$.
The spectral gap will be defined as $1-\re\lambda_{2}(A)$. There
are three motivations for this definition: first, the continuous-time
Markov chain based on $A$ is $\exp(t\cdot(A-I))$, and this spectral
gap specifies the largest norm of any of the latter's nontrivial eigenvalues;
second, $\re\lambda_{2}(A)=1$ if and only if $\phi=0$ (i.e., if
the matrix is reducible); finally, this gap lower bounds the distance
(in the complex plane) between the trivial and any nontrivial eigenvalue.

It was noted by Fiedler \cite{fiedler1995estimate} that the upper
bound on $\phi$ in Cheeger's inequality (Theorem \ref{thm:cheeg})
carries over easily to general doubly stochastic matrices, because
for $M=(A+A^{T})/2$, $\phi(A)=\phi(M)$ and $\re\lambda_{2}(A)\leq\lambda_{2}(M)$.
(In fact, Fiedler made a slightly different conclusion with these
observations, but they immediately give the upper bound on $\phi$,
see Appendix \ref{sec:Proof-of-gen-fied} for an extension and proof). 
\begin{lem}
\emph{\label{lem:fied} (Fiedler \cite{fiedler1995estimate})} Let
$A$ be \emph{any} $n\times n$ doubly stochastic matrix, then 
\[
\phi(A)\leq\sqrt{2\cdot(1-\re\lambda_{2}(A))}.
\]
\end{lem}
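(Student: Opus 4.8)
The plan is to reduce the claim to the symmetric Cheeger inequality (Theorem~\ref{thm:cheeg}) via symmetrization. Given any doubly stochastic $A$, set $M=(A+A^{T})/2$; this is again doubly stochastic and, crucially, symmetric, so Theorem~\ref{thm:cheeg} applies to it and yields $\phi(M)\leq\sqrt{2\cdot(1-\lambda_{2}(M))}$. It therefore suffices to establish two elementary facts: first, that $\phi(A)=\phi(M)$, and second, that $\re\lambda_{2}(A)\leq\lambda_{2}(M)$, since together these give $\phi(A)=\phi(M)\leq\sqrt{2\cdot(1-\lambda_{2}(M))}\leq\sqrt{2\cdot(1-\re\lambda_{2}(A))}$.

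For the first fact, I would observe that the edge-expansion numerator $\sum_{i\in S,j\notin S}A_{i,j}$ depends on $A$ only through the symmetric quantity that survives averaging: writing $M_{i,j}=(A_{i,j}+A_{j,i})/2$, one has $\sum_{i\in S,j\notin S}M_{i,j}=\tfrac12\sum_{i\in S,j\notin S}A_{i,j}+\tfrac12\sum_{i\in S,j\notin S}A_{j,i}=\tfrac12\sum_{i\in S,j\notin S}A_{i,j}+\tfrac12\sum_{i\notin S,j\in S}A_{i,j}$. The second sum is the flow out of $[n]\setminus S$ into $S$; because $A$ is doubly stochastic, the total weight leaving $S$ equals the total weight entering $S$ (each equals $|S|$ minus the weight staying inside $S$), so the cross-cut is balanced in both directions and this equals $\sum_{i\in S,j\notin S}A_{i,j}$. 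Hence the numerators for $M$ and $A$ coincide for every $S$, the denominators are identical, and $\phi(A)=\phi(M)$. (This also re-proves the remark $\phi(A)=\phi(A^{T})$.)

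For the second fact, I would use the variational/numerical-range characterization restricted to the subspace orthogonal to $\mathbf{1}$. For a unit complex vector $x\perp\mathbf{1}$, $\re\langle x,Ax\rangle=\langle x,Mx\rangle$ since the skew-Hermitian part $(A-A^{T})/2$ contributes a purely imaginary value to the quadratic form. If $\lambda_{2}(A)$ is attained at eigenvector $x$ (which may be taken orthogonal to $\mathbf{1}$, as $A\mathbf{1}=\mathbf{1}$ and $A^{T}\mathbf{1}=\mathbf{1}$ imply the nontrivial spectrum lives on $\mathbf{1}^{\perp}$; one should be slightly careful with complex eigenvectors, but taking real and imaginary parts and using that $M$ is symmetric handles this), then $\re\lambda_{2}(A)=\re\langle x,Ax\rangle=\langle x,Mx\rangle\leq\max_{y\perp\mathbf{1},\|y\|=1}\langle y,My\rangle=\lambda_{2}(M)$, the last equality being the Courant--Fischer characterization of the second-largest eigenvalue of the symmetric matrix $M$ (using $\lambda_{1}(M)=1$ with eigenvector $\mathbf{1}$). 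Combining the two facts finishes the proof.

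I do not expect a serious obstacle here; the one point requiring mild care is the handling of complex eigenvectors of the nonsymmetric matrix $A$ when passing to the real quadratic form of $M$ — one wants to argue that $\re\lambda_{2}(A)$ is genuinely bounded by the Rayleigh quotient of $M$ on some real vector in $\mathbf{1}^{\perp}$, which follows by decomposing the (possibly complex) eigenvector into real and imaginary parts and noting the bound $\re\lambda_2(A)\le\lambda_2(M)$ survives. Everything else is bookkeeping with double stochasticity. This argument is exactly the content of the cited observation of Fiedler; the paper defers a fuller treatment (including the general nonnegative case) to Appendix~\ref{sec:Proof-of-gen-fied}.
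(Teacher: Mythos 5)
Your proposal is correct and takes essentially the same route as the paper's proof in Appendix~\ref{sec:Proof-of-gen-fied}: both pass to the additive symmetrization $M=(A+A^{T})/2$, verify $\phi(A)=\phi(M)$ via the Eulerian/balanced-cut identity, verify $\re\lambda_{2}(A)\leq\lambda_{2}(M)$ via the Rayleigh quotient together with orthogonality of nontrivial eigenvectors to the stochastic eigenvector, and then invoke the symmetric Cheeger inequality for $M$. The paper proves this for general nonnegative $R$ (after conjugating to a matrix with identical left/right PF eigenvector $w$), of which the doubly stochastic statement is the special case $w=\tfrac{1}{\sqrt n}\mathbf{1}$; your handling of the complex-eigenvector point via real/imaginary decomposition is a minor alternative to the paper's direct use of the complex Courant--Fischer characterization, and your appeal to ``the nontrivial spectrum lives on $\mathbf{1}^{\perp}$'' should, for full rigor, dispatch the degenerate case $\lambda_{2}(A)=1$ separately (there $\phi(A)=0$ and the bound is trivial), as the paper does.
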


It remained to investigate the other direction, i.e., the lower bound
on $\phi(A)$ in terms of the spectral gap $1-\re\lambda_{2}(A)$.
Towards this end, we define the function 
\begin{defn}
\label{def:gamma} $\Gamma(n)=\min\left\{ \dfrac{\phi(A)}{1-\re\lambda_{2}(A)}:\ A\ \text{is an \ensuremath{n\times n} doubly stochastic matrix}\right\} .$ 
\end{defn}

For symmetric doubly stochastic matrices this minimum is no less than
$1/2$. However, for doubly stochastic matrices $A$ that are not
necessarily symmetric, there are known (but perhaps not widely known)
examples demonstrating that it is impossible to have a function $\Gamma$
entirely independent of $n$. These examples, discussed in Section
\ref{subsec:Known-Constructions}, show that 
\begin{equation}
\Gamma(n)\leq\dfrac{1}{\log n}.\label{eq:intro-gamm-log-bound}
\end{equation}

One reason that $\phi$ and $1-\re\lambda_{2}$ are important is their
connection to mixing time $\tau$ -- the number of steps after which
a random walk starting at any vertex converges to the uniform distribution
over the vertices. For the case of symmetric doubly stochastic matrices
-- or in general reversible Markov chains -- it is simple to show
that $\tau\in O\left(\frac{\log n}{1-\lambda_{2}}\right)$, and by
Cheeger's inequality (Theorem \ref{thm:cheeg}), it further gives
$\tau\in O\left(\frac{\log n}{\phi^{2}}\right)$ where $n$ is the
number of vertices. For the case of general doubly stochastic matrices
-- or in general not-necessarily-reversible Markov chains -- it
still holds that $\tau\in O\left(\frac{\log n}{\phi^{2}}\right)$
by a result of Mihail (\cite{mihail1989conductance}, and see \cite{fill1991eigenvalue}).
Depending on the situation, either $\re\lambda_{2}$ or $\phi$ may
be easier to estimate. Most often, one is interested in concisely-specified
chains on exponential-size sets, i.e., the number of vertices is $n$
but the complexity parameter is $\log n$. In this case, either $\phi$
or $\lambda_{2}$ (for the reversible case) can be used to estimate
$\tau$. However, if the matrix or chain is given explicitly, then
one reason the Cheeger inequalites are useful is because it is simpler
to estimate $\tau$ using $\lambda_{2}$ which is computable in P
while computing $\phi$ is NP-hard \cite{garey1974some}.

From the point of view of applications to Markov Chain Monte Carlo
(MCMC) algorithms, a $\log n$ loss in the relation between $\phi$
and $1-\re\lambda_{2}$ as implied by \eqref{eq:intro-gamm-log-bound},
is not a large factor. For MCMC algorithms, since ``$n$'' is the
size of the state space being sampled or counted and the underlying
complexity parameter is $\log n$, if it were true that $\Gamma(n)\geq\log^{-c}n$
for some constant $c$, then the loss in mixing time estimates would
be polynomial in the complexity parameter, and thus, the quantity
$1-\re\lambda_{2}$ could still be used to obtain a reasonable estimate
of the mixing time even in the case of nonsymmetric doubly stochastic
matrices.

However, the truth is much different. Our main result is that $\Gamma(n)$
does not scale as $\log^{-c}n$, but is \emph{exponentially} smaller. 
\begin{thm}
\label{thm:main} \emph{(Bounds on $\Gamma$)} 
\[
\dfrac{1}{35\cdot n}\leq\Gamma(n)\leq\dfrac{1}{\sqrt{n}}.
\]
\end{thm}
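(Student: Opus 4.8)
The plan is to prove the two bounds separately, since they are of genuinely different character: the upper bound $\Gamma(n)\le 1/\sqrt n$ is a construction, and the lower bound $\Gamma(n)\ge 1/(35n)$ is a structural argument valid for all doubly stochastic matrices.

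For the upper bound, I would exhibit an explicit family $A_n$ of $n\times n$ doubly stochastic matrices, all of whose nontrivial eigenvalues vanish, yet whose edge expansion is at most $1/\sqrt n$. The abstract already tells us such a family exists with $\phi(A_n)\le (1-\re\lambda_2(A_n))/\sqrt n = 1/\sqrt n$. The natural candidate is a ``chain-like'' or ``cyclic shift composed with averaging'' construction: take a block structure on $\sqrt n$ blocks of size $\sqrt n$, where within-block the matrix contracts everything toward the block average (which makes the within-block eigenvalues $0$) and between blocks one moves along a directed cycle. A cut that separates the first half of the cycle of blocks from the second half crosses only $O(\sqrt n)$ of the ``cycle'' mass while having $|S|\approx n/2$, giving $\phi(A_n)\le O(1/\sqrt n)$; one then tunes constants to land exactly at $1/\sqrt n$. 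The key computation is verifying nilpotency of the nontrivial part — I would exhibit the eigenvectors/generalized eigenvectors explicitly, or argue that $A_n$, modulo the all-ones direction, is similar to a strictly block-upper-triangular (nilpotent) matrix.

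For the lower bound, I would argue contrapositively: given a doubly stochastic $A$ with small edge expansion $\phi(A)=\phi$, I want to produce an eigenvalue with real part close to $1$, specifically $\re\lambda_2(A)\ge 1-35n\phi$. The strategy is to find a set $S$ witnessing $\phi$, consider the ``lazy'' indicator or a smoothed version of $\vo_S$, and show that the subspace of functions supported near $S$ is nearly invariant under $A$, so that $A$ restricted (and suitably projected) to this subspace has an eigenvalue near $1$. Concretely, I expect to use the quantity $\langle \vo_S, A\vo_S\rangle / \langle \vo_S,\vo_S\rangle$ which is $1-\phi$ by double stochasticity and the cut being exactly balanced-ish, but for a nonsymmetric $A$ this Rayleigh-type quotient controls only $\re\lambda_2$ of the symmetrization $M=(A+A^T)/2$, not of $A$ itself — and Fiedler's inequality runs the wrong way here. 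So instead I would work with the whole orbit: consider the vectors $\vo_S, A\vo_S, A^2\vo_S,\dots$ and show that the flow out of (a neighborhood of) $S$ is small at every step, which traps a nontrivial eigenvalue in a small disk around $1$; quantitatively, the escape probability per step from the ``heavy'' side is $O(n\phi)$, and summing a geometric-type bound yields the factor $n$. The constant $35$ will come from being slightly wasteful in these estimates.

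The main obstacle will be the lower bound, and specifically handling nonsymmetry: the usual Cheeger lower-bound proof (test vector $\vo_S$ in the Rayleigh quotient) simply does not see the eigenvalues of a nonsymmetric $A$, so one cannot just quote it. I would need an argument that genuinely tracks the non-normal structure — e.g. showing that if every eigenvalue other than $1$ had real part below $1-35n\phi$, then the semigroup $\exp(t(A-I))$ (or the powers $A^t$) would contract the balanced test function too fast, contradicting that the cut $S$ leaks only $\phi|S|$ mass per step; the bookkeeping that converts ``leaks slowly out of $S$'' into ``has an eigenvalue near $1$'' in the absence of self-adjointness is where the real work lies, and where the linear-in-$n$ loss (as opposed to the $\sqrt n$ of the construction) is forced.
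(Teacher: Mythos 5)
Your lower-bound strategy is in the same spirit as the paper's: bound the leakage out of $S$ over $k$ steps, then show that if every nontrivial eigenvalue has real part far from $1$, the chain must have mixed after $k\approx n/(1-\re\lambda_2)$ steps, giving a contradiction. The paper formalizes ``slow leakage'' as the submultiplicativity $\phi(R^k)\leq k\,\phi(R)$ (Lemma \ref{lem:pow_bound}), and formalizes ``must have mixed'' via a Schur decomposition together with a combinatorial bound on $\|T^k\|_2$ for upper-triangular $T$ with small diagonal and $\|T\|_2\le 1$ (Lemmas \ref{lem:T_norm_bound}, \ref{lem:T_norm_1_bound}); the linear-in-$n$ loss you correctly anticipate comes precisely from the ${k+n\choose n}$ term in that bound, not from geometric summation of escape probabilities. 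You also omit a step the paper needs: the Schur argument naturally controls $1-|\lambda_m|$ (modulus), not $1-\re\lambda_2$ (real part), and the conversion is done via lazification $\tilde R=pI+(1-p)R$ and a limit $p\to 1$ (Lemma \ref{lem:real-lambda-bound}). So the skeleton of your lower-bound argument matches, but the two hard lemmas are exactly the ``bookkeeping'' you defer, and they are the entire content.

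Your upper-bound construction, however, does not work as described, and this is the genuine gap. A directed cycle on $\sqrt n$ blocks, even composed with within-block averaging, has the nontrivial eigenvalues of the length-$\sqrt n$ cyclic shift, namely $e^{2\pi i j/\sqrt n}$ for $j\neq 0$, whose maximal real part is $\cos(2\pi/\sqrt n)\approx 1-2\pi^2/n$. That is nowhere near $0$; in fact with $\phi\approx 1/\sqrt n$ you would get $\phi/(1-\re\lambda_2)\approx \sqrt n/(2\pi^2)$, which is a vacuous upper bound on $\Gamma$, not $1/\sqrt n$. The actual construction (Theorem \ref{thm:constr_main}) realizes a Schur form whose nontrivial block is an open chain, a shifted nilpotent Jordan block of length $n-2$ with off-diagonal entries $r_n=1-1/(\sqrt n+2)$, not a cycle; the whole difficulty is exhibiting a unitary $U_n$ (a Householder-type reflection built from a vector with two distinct entry values plus the uniform first column) conjugating that chain to a genuinely doubly stochastic nonnegative matrix. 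Closing the chain into a cycle to make the construction obviously doubly stochastic is precisely what reintroduces large-real-part eigenvalues, so this is not a matter of ``tuning constants''; the open-chain Schur form and the specific $U_n$ are where the content lies, and a block-cyclic sketch would have to be replaced wholesale.
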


We give an \emph{explicit} construction of doubly stochastic matrices
$A_{n}$ for the upper bound on $\Gamma$. This construction of highly
nonexpanding doubly stochastic matrices has, in addition, the surprising
property that \emph{every} nontrivial eigenvalue is $0$. Thus, for
non-reversible Markov chains, the connection between $\phi$ and $\re\lambda_{2}$
breaks down substantially, in that the upper bound on mixing time
obtained by lower bounding $\phi$ by the spectral gap $1-\re\lambda_{2}$
can be be exponentially weaker (when the complexity parameter is $\log n$)
than the actual mixing time, whereas for reversible chains there is
at worst a quadratic loss.

This theorem has a very natural extension to general nonnegative matrices,
as we next describe.

\subsection{\label{subsec:genl-nonneg}General nonnegative matrices and a two-sided
quantitative refinement of the Perron-Frobenius theorem}

We extend our results to general nonnegative matrices $R$. By the
Perron-Frobenius theorem (see Theorem \ref{thm:(Perron-Frobenius)}),
since $R$ is nonnegative, it has a nonnegative eigenvalue $r$ (called
the PF eigenvalue) that is also largest in magnitude amongst all eigenvalues,
and the corresponding left and right eigenvectors $u$ and $v$ have
all nonnegative entries. Further, if $R$ is irreducible, i.e., the
underlying weighted digraph on edges with positive weight is strongly
connected (for every $(i,j)$ there is a $k$ such that $R^{k}(i,j)>0$),
then $r$ is a simple eigenvalue, and $u$ and $v$ have all positive
entries. We henceforth assume that nonzero nonnegative $R$ has been
scaled (to $\frac{1}{r}R$) so that $r=1$. Thus we can again write
the eigenvalues of $R$ as $1=\lambda_{1}(R)\geq\re\lambda_{2}(R)\geq\cdots\geq\re\lambda_{n}(R)\geq-1$.
Scaling changes the spectrum but not the edge expansion (which we
define next), so this canonical scaling is necessary before the two
quantities can be compared.

Defining the edge expansion of $R$ is slightly delicate so we explain
the reasoning after Definition \ref{def:gen-edge-expansion}, and
present only the final definition (see Definition \ref{def:gen-edge-expansion})
here. Consider $u$ and $v$ as left and right eigenvectors for eigenvalue
1 of $R$. If \emph{every} such pair $(u,v)$ for $R$ has some $i$
such that $u(i)=0$ or $v(i)=0$, then define $\phi(R)=0$. Otherwise,
let $u$ and $v$ be some positive eigenvectors for eigenvalue 1 of
$R$, normalized so that $\langle u,v\rangle=1$, and define the edge
expansion of $R$ as 
\[
\phi(R)=\min_{S\subseteq[n],\sum_{i\in S}u_{i}\cdot v_{i}\leq\frac{1}{2}}\dfrac{\sum\limits _{i\in S,j\in\overline{S}}R_{i,j}\cdot u_{i}\cdot v_{j}}{\sum\limits _{i\in S}u_{i}\cdot v_{i}}.
\]
Given this definition of $\phi(R)$, we show the following. 
\begin{thm}
\label{thm:gen-phi-lambda2}Let $R$ be a nonnegative matrix with
PF eigenvalue $1$, and let $u$ and $v$ be any corresponding left
and right eigenvectors. Let $\kappa=\min_{i}u_{i}\cdot v_{i}$, and
if $\kappa>0$, let $u$ and $v$ be normalized so that $\langle u,v\rangle=1$.
Then
\[
\frac{1}{30}\cdot\dfrac{1-\re\lambda_{2}(R)}{n+\ln\left(\frac{1}{\kappa}\right)}\ \overset{}{\leq}\ \phi(R)\ \overset{}{\leq}\ \sqrt{2\cdot\left(1-\re\lambda_{2}(R)\right)}.
\]
\end{thm}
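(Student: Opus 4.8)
The plan is to conjugate $R$ into a stochastic matrix, turning both inequalities into statements about a (non-reversible) finite Markov chain, and then to prove the hard direction by extracting a near-invariant distribution from an optimal cut and feeding it into the chain's fundamental matrix. Put $D=\operatorname{diag}(v)$ and $P=D^{-1}RD$. Since $Rv=v$, $P$ is row-stochastic; since $u^{T}R=u^{T}$, the vector $\pi$ with $\pi_{i}=u_{i}v_{i}$ is a left eigenvector of $P$ for eigenvalue $1$, and $\langle u,v\rangle=1$ makes it a probability vector with $\min_{i}\pi_{i}=\kappa$. Conjugation preserves the spectrum, so $\re\lambda_{2}(P)=\re\lambda_{2}(R)$; and $R_{ij}u_{i}v_{j}=\pi_{i}P_{ij}$, so $\phi(R)$ equals the $\pi$-weighted conductance $\phi_{\pi}(P):=\min_{\pi(S)\le 1/2}\bigl(\sum_{i\in S,\,j\notin S}\pi_{i}P_{ij}\bigr)/\pi(S)$. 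If $\kappa=0$ then $\phi(R)=0$ and $\ln(1/\kappa)=\infty$, so both bounds hold vacuously; so assume $\kappa>0$, i.e. $R$ (hence $P$) irreducible, whence $1-\re\lambda_{2}(P)>0$. Throughout one uses the flow identity $\sum_{i\in S,\,j\notin S}\pi_{i}P_{ij}=\sum_{i\notin S,\,j\in S}\pi_{i}P_{ij}$, which follows from $P\vo=\vo$ and $\pi^{T}P=\pi^{T}$.

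\textbf{Upper bound, and the setup for the lower bound.} For the upper bound: the additive reversibilisation $\widetilde P=\tfrac12(P+P^{\star})$ with $P^{\star}_{ij}=\pi_{j}P_{ji}/\pi_{i}$ is reversible with respect to $\pi$, has $\phi_{\pi}(\widetilde P)=\phi_{\pi}(P)$ (flow identity) and $\re\lambda_{2}(P)\le\lambda_{2}(\widetilde P)$ (comparison of Rayleigh quotients in $\langle\cdot,\cdot\rangle_{\pi}$, exactly as in Lemma~\ref{lem:fied}), so the weighted form of Theorem~\ref{thm:cheeg} applied to $\widetilde P$ gives $\phi(R)\le\sqrt{2(1-\re\lambda_{2}(R))}$; this is the content of Appendix~\ref{sec:Proof-of-gen-fied}. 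For the lower bound, write $\phi=\phi_{\pi}(P)$, let $S$ attain it, and set $\mu=\pi|_{S}/\pi(S)$ and $\nu=\mu-\pi$. Then $\nu\vo=0$, $\|\nu\|_{1}=2(1-\pi(S))\in[1,2]$, and a short calculation with the flow identity (using $\pi(I-P)=0$) gives $\|\nu(I-P)\|_{1}=\|\mu(I-P)\|_{1}=2\phi$. Let $Z=(I-P+\vo\pi^{T})^{-1}$ be the fundamental matrix (it exists since $P$ is irreducible); from $Z\vo=\vo$ and $(I-P)Z=I-\vo\pi^{T}$ one gets that $Z$ inverts $I-P$ on the invariant subspace $W:=\{w:w\vo=0\}$, hence $\tfrac{1}{2\phi}=\tfrac{\|\nu\|_{1}}{\|\nu(I-P)\|_{1}}\le\bigl\|Z|_{W}\bigr\|_{1\to1}$.

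\textbf{Bounding $\bigl\|Z|_{W}\bigr\|_{1\to1}$.} The extreme points of the unit $\ell_{1}$-ball of $W$ are $\pm\tfrac12(e_{k}-e_{l})$, so $\bigl\|Z|_{W}\bigr\|_{1\to1}=\tfrac12\max_{k\ne l}\sum_{j}|Z_{kj}-Z_{lj}|$. By the standard identity $Z_{jj}-Z_{ij}=\pi_{j}\,\mathbb{E}_{i}[\tau_{j}]$ (the expected hitting time of state $j$ from state $i$), $\sum_{j}|Z_{kj}-Z_{lj}|=\sum_{j}\pi_{j}\,\bigl|\mathbb{E}_{l}[\tau_{j}]-\mathbb{E}_{k}[\tau_{j}]\bigr|\le\sum_{j}\pi_{j}\bigl(\mathbb{E}_{k}[\tau_{j}]+\mathbb{E}_{l}[\tau_{j}]\bigr)=2\mathcal{K}$, where $\mathcal{K}=\sum_{j}\pi_{j}\mathbb{E}_{i}[\tau_{j}]$ is Kemeny's constant and is independent of $i$. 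Finally $\mathcal{K}=\operatorname{tr}(Z)-1=\sum_{\ell=2}^{n}\tfrac{1}{1-\lambda_{\ell}(P)}$; this sum is real (conjugate eigenvalues pair up), so $\mathcal{K}=\sum_{\ell\ge2}\tfrac{1-\re\lambda_{\ell}}{|1-\lambda_{\ell}|^{2}}\le\sum_{\ell\ge2}\tfrac{1}{1-\re\lambda_{\ell}}\le\tfrac{n-1}{1-\re\lambda_{2}}$. Chaining everything, $\phi(R)\ge\tfrac{1}{2\mathcal{K}}\ge\tfrac{1-\re\lambda_{2}(R)}{2(n-1)}$; since $\ln(1/\kappa)\ge\ln n\ge0$, this is already at least as strong as the stated bound (and comes with a sharper constant than $\tfrac{1}{30(n+\ln(1/\kappa))}$).

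\textbf{The main obstacle.} Because $\re\lambda_{2}$ has no variational characterisation, the symmetric-case device of testing an indicator $\vo_{S}$ against a Rayleigh quotient (as in the proof of Theorem~\ref{thm:cheeg}) is unavailable; the fundamental matrix $Z$ is the substitute, and the mechanism is the tension it encodes — a low-conductance cut forces $\bigl\|Z|_{W}\bigr\|$ to be large, while $\operatorname{tr}(Z)=1+\mathcal{K}$ is controlled coordinatewise by $1-\re\lambda_{2}$. The two steps that need genuine care are the identity $\|\nu(I-P)\|_{1}=2\phi$ and the extreme-point evaluation of $\bigl\|Z|_{W}\bigr\|_{1\to1}$; everything else is routine Markov-chain linear algebra. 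One should also double-check the degenerate cases (e.g. periodic $P$, where $Z$ still exists). Specialising to uniform $\pi$ recovers — and sharpens — the lower bound of Theorem~\ref{thm:main}.
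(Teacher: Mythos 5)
Your lower-bound argument is correct and takes a genuinely different route from the paper. The paper lower-bounds $\phi$ via matrix powering: transform $R$ to $A$ with $\|A\|_2=1$ (Lemma~\ref{lem:transRtoA}), take a Schur form $A=w w^T+UTU^*$, bound $\|T^k\|_2$ combinatorially by binomial estimates (Lemmas~\ref{lem:T_norm_bound},~\ref{lem:T_norm_1_bound}), use sub-multiplicativity of $\phi$ under powering (Lemma~\ref{lem:pow_bound}) to convert this into a lower bound on $\phi$ in terms of $1-|\lambda_m|$ (Lemma~\ref{lem:mod_lambda_bound}), and then lazify to pass from $|\lambda_m|$ to $\re\lambda_2$ (Lemma~\ref{lem:real-lambda-bound}). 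Your argument instead conjugates $R$ to a stochastic $P$ with stationary $\pi$, feeds the signed measure $\nu=\mu-\pi$ (with $\mu$ the conditional stationary measure on an optimal cut) into the fundamental matrix $Z=(I-P+\vo\pi^T)^{-1}$, and controls $\|Z|_W\|_{1\to1}$ by Kemeny's constant $\mathcal{K}=\operatorname{tr}(Z)-1=\sum_{\ell\ge 2}(1-\lambda_\ell)^{-1}$, for which the real-part trick $\mathcal{K}=\sum_{\ell\ge 2}\frac{1-\re\lambda_\ell}{|1-\lambda_\ell|^2}\le\frac{n-1}{1-\re\lambda_2}$ directly delivers $\re\lambda_2$ without the lazification step. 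I checked each link: the flow identity and the computation $\|\nu(I-P)\|_1=2\phi$ are correct (the ``$=$'' in ``$\tfrac{1}{2\phi}=\tfrac{\|\nu\|_1}{\|\nu(I-P)\|_1}$'' should be ``$\le$'' since $\tfrac{\|\nu\|_1}{\|\nu(I-P)\|_1}=\tfrac{1-\pi(S)}{\phi}$, but the chain still holds); the extreme points of $\{w:w\vo=0,\|w\|_1\le 1\}$ are indeed $\pm\tfrac12(e_k-e_l)$; the hitting-time identity $Z_{jj}-Z_{ij}=\pi_j\,\mathbb{E}_i[\tau_j]$ and Kemeny's constant remain valid for periodic irreducible chains, since $Z$ exists as the algebraic inverse and the derivation of the identity from $(I-P)M=\vo\vo^T-\operatorname{diag}(1/\pi)$ and $M_{jj}=0$ is purely linear-algebraic. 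The resulting bound $\phi(R)\ge\frac{1-\re\lambda_2(R)}{2(n-1)}$ is strictly stronger than the stated one: it eliminates the $\ln(1/\kappa)$ term entirely and improves the constant, yet remains consistent with the $\Theta(1/\sqrt{n})$ upper bound from Theorem~\ref{thm:constr_main}. Where the paper's approach retains an advantage: it bounds $\phi$ directly against $1-|\lambda_m|$ (Lemma~\ref{lem:mod_lambda_bound}), a quantity relevant to the mixing-time results of Section~\ref{sec:Mix}, and it stays within norm estimates on Schur blocks without invoking the probabilistic nonnegativity of hitting times that underlies $\operatorname{tr}(Z)-1\ge\|Z|_W\|_{1\to1}$; your Kemeny-constant route is cleaner and sharper for the theorem as stated.
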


The upper bound in Theorem \ref{thm:main} is a straightforward extension
of Fiedler's bound (Lemma \ref{lem:fied}) based on the above mentioned
definition of $\phi$ (Definition \ref{def:gen-edge-expansion}).
Also note that the lower bound in Theorem \ref{thm:main} can be obtained
by setting $\kappa=\frac{1}{n}$ in Theorem \ref{thm:gen-phi-lambda2}.
The upper bound is proven in Appendix \ref{sec:Proof-of-gen-fied}
and the lower bound is shown in Section \ref{sec:lower-bound-on-phi-gen}.

Since Theorem \ref{thm:gen-phi-lambda2} gives a two-sided relation
between the \emph{second} eigenvalue of nonnegative matrices and their
edge expansion, it gives a two-sided quantitative refinement of the
Perron-Frobenius theorem. Although the Perron-Frobenius theorem implies
that the nontrivial eigenvalues of an irreducible nonnegative matrix
$R$ with PF eigenvalue $1$ have real part strictly less than $1$,
it does not give any concrete separation. Further, it also does not
provide a qualitative (or quantitative) implication in the other direction
-- whether a nonnegative matrix $R$ with all nontrivial eigenvalues
having real part strictly less than 1 \emph{implies} that $R$ is
irreducible. Theorem \ref{thm:gen-phi-lambda2} comes to remedy this
by giving a lower and upper bound on the spectral gap in terms of
$\phi$, a quantitative measure of the irreducibility of $R$. We
are not aware of any previous result of this form.

\subsection{Mixing time}

The third quantity we study is the mixing time of general nonnegative
matrices, and we relate it to their singular values, edge expansion,
and spectral gap. This helps us obtain new bounds on mixing time,
and also obtain elementary proofs for known results. These results
are treated in detail in the second part of the paper, in Section
\ref{sec:Mix}.

\subsection{Perspectives}

\subsubsection{\label{subsec:Matrix-perturbations}Matrix perturbations}

Let $A$ be a nonnegative matrix with PF eigenvalue 1 and corresponding
positive left and right eigenvector $w$ with $\langle w,w\rangle=1$,
and let $\kappa=\min_{i}w_{i}^{2}$. Given such $A$, it is certainly
easier to calculate its spectrum than its edge expansion. However,
in other cases, e.g., if the matrix is implicit in a nicely structured
randomized algorithm (as in the canonical paths method \cite{SinclairJ89}),
the edge expansion may actually be easier to bound. From this point
of view, a lower bound on $\phi(A)$ in terms of the spectral gap
is an \textit{eigenvalue perturbation bound.} Specifically, one might
write a nonnegative matrix $A$ with small edge expansion $\phi(A)$
as a perturbation of another nonnegative matrix $A_{0}$, i.e., 
\[
A=A_{0}+\delta\cdot B
\]
where $A_{0}$ has disconnected components $S,S^{c}$ (for $S$ achieving
$\phi(A)$), and $B$ is a matrix such that $\|B\|_{2}\leq1$, $Bw=0$
and $B^{T}w=0$. Due to the conditions on $A$ and $B$, $A_{0}$
has PF eigenvalue 1 with left and right eigenvector $w$, and since
$BD_{w}\mathbf{1}=0$, writing $\mathbf{1}_{\overline{S}}=\mathbf{1}-\mathbf{1}_{S}$,
we have, 
\[
\left|\frac{\langle\mathbf{1}_{S},D_{w}BD_{w}\mathbf{1}_{\overline{S}}\rangle}{\langle\mathbf{1}_{S},D_{w}D_{w}\mathbf{1}_{S}\rangle}\right|=\left|\frac{\langle\mathbf{1}_{S},D_{w}BD_{w}\mathbf{1}_{S}\rangle}{\langle\mathbf{1}_{S},D_{w}D_{w}\mathbf{1}_{S}\rangle}\right|=\left|\frac{\langle D_{w}\mathbf{1}_{S},BD_{w}\mathbf{1}_{S}\rangle}{\langle D_{w}\mathbf{1}_{S},D_{w}\mathbf{1}_{S}\rangle}\right|\leq\|B\|_{2}\leq1,
\]
and it follows then that $\phi(R_{0})-\delta\leq\phi(R)\leq\phi(R_{0})+\delta$,
and since in this case $\phi(R_{0})=0$, so $\phi(A)\leq\delta.$
Edge expansion is therefore stable with respect to perturbation by
$B$. What about the spectral gap?

$A_{0}$ has (at least) a double eigenvalue at 1, and $A$ retains
a simple eigenvalue at $1$, so it is natural to try to apply eigenvalue
stability results, specifically Bauer-Fike (\cite{bauer1960norms},
\cite{Bhatia97} \S VIII), to obtain an upper bound on $|1-\lambda_{2}(A)|$
(and therefore also on $1-\re\lambda_{2}(A)$). However, Bauer-Fike
requires $A_{0}$ to be diagonalizable, and the quality of the bound
depends upon the condition number of the diagonalizing change of basis\footnote{The condition number is $\inf(\|B\|\cdot\|B^{-1}\|)$ over $B$ such
that $BAB^{-1}$ is diagonal, with $\|\cdot\|$ being operator norm.}. There are extensions of Bauer-Fike which do not require diagonalizability,
but deteriorate exponentially in the size of the Jordan block, and
the bound still depends on the condition number of the (now Jordan-izing)
change of basis (\cite{Saad11} Corollary 3.2). Since there is no
a priori (i.e., function of $n$) bound on these condition numbers,
these tools unfortunately do not imply any, even weak, result analogous
to Theorem \ref{thm:main}.

In summary, the lower bound in Theorem \ref{thm:main} should be viewed
as a new eigenvalue perturbation bound: 
\[
1-\re\lambda_{2}(A)\leq30\cdot\delta\cdot\left(n+\ln\left(\frac{1}{\kappa}\right)\right).
\]
A novel aspect of this bound, in comparison with the prior literature
on eigenvalue perturbations, is that it does not depend on the condition
number (or even the diagonalizability) of $A$ or of $A_{0}$.

\subsubsection{Relating eigenvalues of a nonnegative matrix and its additive symmetrization}

Let $A$ be any nonnegative matrix with PF eigenvalue 1, and corresponding
positive left and right eigenvector $w$. Our result helps to give
the following bounds between the second eigenvalue of a nonnegative
matrix $A$ and that of its \textit{additive symmetrization} $\frac{1}{2}(A+A^{T})$. 
\begin{lem}
\label{lem:bound-eig-A-and_AplusAT}Let $A$ be any nonnegative matrix
with PF eigenvalue 1 and corresponding positive left and right eigenvector
$w$, let $\kappa=\min_{i}w_{i}^{2}$ and $M=\frac{1}{2}(A+A^{T})$.Then
\[
\frac{1}{1800}\left(\frac{1-\re\lambda_{2}(A)}{n+\ln\left(\frac{1}{\kappa}\right)}\right)^{2}\overset{}{\leq}1-\lambda_{2}\left(M\right)\overset{\text{\text{}}}{\leq}1-\re\lambda_{2}(A).
\]
\end{lem}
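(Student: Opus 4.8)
The plan is to sandwich $1-\lambda_2(M)$ between two expressions, using $\phi(A)$ as the intermediary and invoking Theorem~\ref{thm:gen-phi-lambda2} twice. For the \textbf{upper bound} $1-\lambda_2(M)\le 1-\re\lambda_2(A)$, note that $M=\frac12(A+A^T)$ is symmetric and nonnegative, and $w$ is a positive left and right eigenvector of $M$ with eigenvalue $1$ (since $Mw=\frac12(Aw+A^Tw)=w$ and similarly $w^TM=w^T$), so $1$ is the PF eigenvalue of $M$. The edge expansion is insensitive to additive symmetrization: in the $w$-weighted bilinear form defining $\phi$, the numerator $\sum_{i\in S,j\in\overline S}M_{i,j}w_iw_j=\frac12\sum_{i\in S,j\in\overline S}(A_{i,j}+A_{j,i})w_iw_j$, and because $\langle \mathbf 1_S,D_wAD_w\mathbf 1_{\overline S}\rangle+\langle \mathbf 1_{\overline S},D_wAD_w\mathbf 1_S\rangle = \langle \mathbf 1_S,D_w(A+A^T)D_w\mathbf 1_{\overline S}\rangle$ up to the diagonal-block terms that cancel (here one uses $D_wAD_w\mathbf 1 = D_wAw\cdot(\text{componentwise})$, i.e. $A$'s row sums against $w$), one gets $\phi(M)=\phi(A)$. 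Then Fiedler's bound (the upper bound of Theorem~\ref{thm:gen-phi-lambda2}, equivalently Lemma~\ref{lem:fied} in the doubly stochastic case and its extension in Appendix~\ref{sec:Proof-of-gen-fied}) actually proceeds \emph{through} $M$: it shows $\re\lambda_2(A)\le\lambda_2(M)$ directly, which is exactly the claimed upper bound. So this direction is essentially a restatement of how Fiedler's inequality is proved, and requires only that I make the $\phi(A)=\phi(M)$ identity and the $\re\lambda_2(A)\le\lambda_2(M)$ comparison explicit.

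For the \textbf{lower bound}, I would chain three inequalities. First, apply the \emph{easy} direction of Cheeger (the lower bound in Theorem~\ref{thm:cheeg}) to the symmetric nonnegative matrix $M$ — after checking that $M$ with eigenvector $w$ falls under the weighted version of Cheeger's inequality, which is standard — to get $\phi(M)\le\sqrt{2\,(1-\lambda_2(M))}$, i.e. $1-\lambda_2(M)\ge \frac12\phi(M)^2=\frac12\phi(A)^2$ using $\phi(M)=\phi(A)$ from the previous paragraph. Second, apply the lower bound of Theorem~\ref{thm:gen-phi-lambda2} to $A$ itself: $\phi(A)\ge\frac{1}{30}\cdot\frac{1-\re\lambda_2(A)}{n+\ln(1/\kappa)}$. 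Combining, $1-\lambda_2(M)\ge\frac12\cdot\frac{1}{900}\left(\frac{1-\re\lambda_2(A)}{n+\ln(1/\kappa)}\right)^2=\frac{1}{1800}\left(\frac{1-\re\lambda_2(A)}{n+\ln(1/\kappa)}\right)^2$, which is exactly the stated constant.

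The \textbf{main obstacle} is not any single inequality — each of the three ingredients is either proved earlier in the paper or is classical — but rather the bookkeeping around the edge-expansion identity $\phi(A)=\phi(M)$ in the \emph{weighted, non-doubly-stochastic} setting of Definition~\ref{def:gen-edge-expansion}, together with verifying that the weighted Cheeger lower bound and the weighted Fiedler upper bound both apply to $M$ with the eigenvector $w$ (equivalently, that conjugating by $D_w$ and looking at $D_wMD_w$ reduces $M$ to the doubly-stochastic-like situation where $\langle u,v\rangle=1$ with $u=v=D_w\mathbf 1$). Once the normalization $\langle w,w\rangle=1$ and $\kappa=\min_i w_i^2$ are in place, these reductions are routine, so I would state the $D_w$-conjugation reduction as a short lemma or inline remark and then simply cite Theorems~\ref{thm:cheeg} and~\ref{thm:gen-phi-lambda2}. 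I do not expect any deeper difficulty; the lemma is a corollary of the two-sided bound in Theorem~\ref{thm:gen-phi-lambda2} applied once to $A$ and once (via its symmetric special case) to $M$.
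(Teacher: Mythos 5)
Your proof is correct and matches the paper's (very terse) intended argument: the paper simply states that the lemma ``immediately follows from Theorem~\ref{thm:gen-phi-lambda2} and the fact that $\phi$ is unchanged by additive symmetrization,'' which is exactly what you unpacked, and your constant $\frac{1}{2}\cdot\frac{1}{900}=\frac{1}{1800}$ is the right arithmetic. You were also right to notice that the upper bound $1-\lambda_2(M)\le 1-\re\lambda_2(A)$ does not follow from the \emph{statement} of Theorem~\ref{thm:gen-phi-lambda2} alone (chaining through $\phi$ in that direction gives only a much weaker bound), but rather from the intermediate inequality $\re\lambda_2(A)\le\lambda_2(M)$ established in the proof of its upper bound in Appendix~\ref{sec:Proof-of-gen-fied}.
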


The bounds immediately follows from Theorem \ref{thm:gen-phi-lambda2}
and the fact that $\phi$ is unchanged by additive symmetrization.
We remark that any improved lower bound on $1-\lambda_{2}(M)$ in
terms of $1-\re\lambda_{2}(A)$ will help to improve the lower bound
in Theorem \ref{thm:gen-phi-lambda2}. As a consequence of Lemma \ref{lem:bound-eig-A-and_AplusAT},
any bound based on the second eigenvalue of symmetric nonnegative
matrices can be applied, with dimension-dependent loss, to nonnegative
matrices that have identical left and right eigenvector for the PF
eigenvalue.

An example application of Lemma \ref{lem:bound-eig-A-and_AplusAT}
is the following. For some doubly stochastic $A$ (not necessarily
symmetric), consider the continuous time Markov Chain associated with
it, $\exp(t\cdot(A-I))$. It is well-known (for instance, \cite{diaconis1996logarithmic})
that for any standard basis vector $x_{i}$,

\[
\left\Vert \exp(t\cdot(A-I))x_{i}-\frac{1}{n}\mathbf{1}\right\Vert _{1}^{2}\leq n\cdot\exp\left(-2\cdot\left(1-\lambda_{2}\left(M\right)\right)\cdot t\right).
\]
Thus, using Lemma \ref{lem:bound-eig-A-and_AplusAT}, we immediately
get the bound in terms of the second eigenvalue of $A$ itself (instead
of its additive symmetrization), 
\[
\left\Vert \exp(t\cdot(A-I))x_{i}-\frac{1}{n}\mathbf{1}\right\Vert _{1}^{2}\leq n\cdot\exp\left(-\frac{1}{1800}\left(\frac{1-\re\lambda_{2}(A)}{n+\ln\left(\frac{1}{\kappa}\right)}\right)^{2}\cdot t\right).
\]

\subsubsection{\label{subsec:The-role-of-singular}The role of singular values}

Although in the symmetric case singular values are simply the absolute
values of the eigenvalues, the two sets can be much less related in
the nonsymmetric case. It is not difficult to show the following (see
Appendix \ref{sec:Proof-of-sigmabound}). 
\begin{lem}
\label{lem:sigma-bound-phi} Let $A$ be a nonnegative matrix with
PF eigenvalue 1, and let $w$ be the positive vector such that $Aw=w$
and $A^{T}w=w$. Then 
\[
\frac{1-\sigma_{2}(A)}{2}\leq\phi(A),
\]
where $\sigma_{2}(A)$ is the second largest singular value of $A$. 
\end{lem}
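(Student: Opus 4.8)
The plan is to prove the equivalent statement $\sigma_{2}(A)\ge 1-2\phi(A)$ by the same test-function device that yields the easy half of Cheeger's inequality, carried out in the inner product twisted by $w$. Throughout, normalize $\langle w,w\rangle=1$ (the normalization in Definition~\ref{def:gen-edge-expansion}), write $\pi_{i}=w_{i}^{2}$ and $\pi(S)=\sum_{i\in S}\pi_{i}$, and let $D_{w}=\operatorname{diag}(w)$; note that for a set $S$ achieving $\phi(A)$ we have $\pi(S)\le\tfrac12$ and $\sum_{i\in S,\,j\in\overline{S}}A_{ij}w_{i}w_{j}=\phi(A)\,\pi(S)$.

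The first step is the one structural input that makes \emph{singular} values (rather than eigenvalues) cooperate. The matrix $A^{T}A$ is nonnegative, being a product of nonnegative matrices, and $A^{T}A\,w=A^{T}(Aw)=A^{T}w=w$ with $w>0$; by the Perron--Frobenius theorem a nonnegative matrix possessing a positive eigenvector has that eigenvalue equal to its spectral radius, so $\rho(A^{T}A)=1$ and hence $\sigma_{1}(A)=\|A\|_{2}=1$, with $w$ an eigenvector of $A^{T}A$ for its largest eigenvalue $1=\sigma_{1}(A)^{2}$. Fixing an orthonormal eigenbasis of the symmetric positive semidefinite matrix $A^{T}A$ whose first vector is $w$, every real $x\perp w$ therefore satisfies $\|Ax\|^{2}=\langle x,A^{T}Ax\rangle\le\sigma_{2}(A)^{2}\|x\|^{2}$, and so, by Cauchy--Schwarz, $\langle x,Ax\rangle\le\|x\|\,\|Ax\|\le\sigma_{2}(A)\,\|x\|^{2}$. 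This inequality, valid for every real $x\perp w$, is what I will feed a test vector into.

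For the test vector I would take $x=D_{w}\mathbf{1}_{S}-\pi(S)\,w$, where $S$ attains $\phi(A)$. Then $\langle x,w\rangle=\pi(S)-\pi(S)=0$ and $\|x\|^{2}=\pi(S)-\pi(S)^{2}=\pi(S)(1-\pi(S))$. To evaluate $\langle x,Ax\rangle$ I would replace $A$ by its symmetric part $N=\tfrac12(A+A^{T})$ (which does not change a real quadratic form, and is nonnegative with $Nw=w$), expand, and use $\langle D_{w}\mathbf{1}_{S},w\rangle=\langle w,ND_{w}\mathbf{1}_{S}\rangle=\pi(S)$ to obtain $\langle x,Ax\rangle=\langle x,Nx\rangle=\langle D_{w}\mathbf{1}_{S},ND_{w}\mathbf{1}_{S}\rangle-\pi(S)^{2}$. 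Since $\langle D_{w}\mathbf{1}_{S},ND_{w}\mathbf{1}_{S}\rangle=\sum_{i,j\in S}w_{i}A_{ij}w_{j}$ and $\sum_{i\in S}w_{i}(Aw)_{i}=\pi(S)$, the first term equals $\pi(S)-\phi(A)\,\pi(S)$, so $\langle x,Ax\rangle=\pi(S)\big(1-\phi(A)-\pi(S)\big)$.

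Combining the two displays, $\sigma_{2}(A)\ \ge\ \dfrac{\langle x,Ax\rangle}{\|x\|^{2}}\ =\ \dfrac{1-\phi(A)-\pi(S)}{1-\pi(S)}\ =\ 1-\dfrac{\phi(A)}{1-\pi(S)}\ \ge\ 1-2\phi(A)$, the last step because $1-\pi(S)\ge\tfrac12$; rearranging gives $\tfrac{1-\sigma_{2}(A)}{2}\le\phi(A)$. I expect the only genuinely non-routine point to be the first step — recognizing that $w$ is not merely a fixed vector of $A$ but simultaneously a top singular vector, which is exactly what licenses restricting to $w^{\perp}$ and invoking the spectral bound on $A^{T}A$; everything else is the familiar computation with the indicator $\mathbf{1}_{S}$ minus its $\pi$-mean, and the constant $2$ is just the usual cost of the constraint $\pi(S)\le\tfrac12$. (Equivalently one could run the argument through the row-stochastic chain $D_{w}^{-1}AD_{w}$, which has stationary distribution $\pi$, conductance $\phi(A)$, and the same singular values as $A$ in $\ell^{2}(\pi)$; this is cosmetic.)
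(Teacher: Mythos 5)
Your proof is correct and takes essentially the same route as the paper: both use the test vector $D_{w}\mathbf{1}_{S}-\pi(S)w$, the key fact that $\|Ax\|_{2}\leq\sigma_{2}(A)\|x\|_{2}$ for $x\perp w$, and Cauchy--Schwarz, with only an algebraic re-arrangement of the final estimate. The one minor difference is how that key fact is established (you invoke Perron--Frobenius for $A^{T}A$; the paper builds an explicit SVD of $A$ block-diagonalized by a unitary with first column $w$), but both amount to the same observation that $w$ is a top singular vector.
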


\begin{proof}
The proof is given in Appendix~\ref{sec:Proof-of-sigmabound}. 
\end{proof}
Despite appearances, this tells us little about edge expansion\suppress{\footnote{Lazy-fying does not help since it can change singular values of nonsymmetric
matrices sharply (unlike eigenvalues), so it would not be possible
to obtain an estimate on $\phi$ in terms of the singular values of
the original matrix $A$.}}. To see this, consider the directed cycle on $n$ vertices, for
which every singular value (and in particular $\sigma_{2}$) is 1,
so Lemma \ref{lem:sigma-bound-phi} gives a lower bound of 0 for $\phi$,
although $\phi=2/n$. A meaningful lower bound should be 0 if and
only if the graph is disconnected, i.e. it should be continuous in
$\phi$. An even more striking example is that of de Bruijn graphs
(described in Section \ref{subsec:Known-Constructions}), for which
half the singular values are 1, although $\phi=\Theta(1/\log n)$.
Eigenvalues, on the other hand, are more informative, since a nonnegative
matrix with PF eigenvalue 1 has a multiple \emph{eigenvalue} at $1$
if and only if, as a weighted graph, it is disconnected.

Despite these observations, singular values can be useful to infer
information about mixing time, and can be used to recover all known
upper bounds on mixing time using $\phi$, as discussed in Section
\ref{sec:Mix} and Lemma \ref{lem:mixt-singval}. \\

Outline: We state the preliminary definitions, theorems, and notations
in Section \ref{sec:Preliminaries}. We give the construction for
the upper bound on $\Gamma$ in Theorem \ref{thm:main} in Section
\ref{sec:Construction-of-non-expanding}, and the lower bound on $\Gamma$
will follow from the general lower bound on $\phi$ in Theorem \ref{thm:gen-phi-lambda2}.
We show the upper bound on $\phi$ in Theorem \ref{thm:gen-phi-lambda2}
in Appendix \ref{sec:Proof-of-gen-fied}, and show the lower bound
on $\phi$ in Section \ref{sec:lower-bound-on-phi-gen}. We relate
mixing time to singular values, edge expansion, and the spectral gap
in Section \ref{sec:Mix} respectively. We defer all proofs to the
Appendix.

\section{\label{sec:Preliminaries}Preliminaries}

We consider \emph{doubly stochastic }matrices $A\in\mathbb{R}^{n\times n}$
which are nonnegative matrices with entries in every row and column
summing to $1$. We also consider general nonnegative matrices $R$,
and say that $R$ is \emph{strongly connected} or \emph{irreducible},
if there is a path from $s$ to $t$ for every pair $(s,t)$ of vertices
in the underlying digraph on edges with positive weight, i.e. for
every $(s,t)$ there exists $k>0$ such that $R^{k}(s,t)>0$. We say
$R$ is \emph{weakly connected}, if there is a pair of vertices $(s,t)$
such that there is a path from $s$ to $t$ but no path from $t$
to $s$ in the underlying digraph (on edges with positive weight).
We restate the Perron-Frobenius theorem for convenience. 
\begin{thm}
\emph{\label{thm:(Perron-Frobenius)}(Perron-Frobenius theorem \cite{perron1907theorie,frobenius1912matrizen})}
Let $R\in\mathbb{R}^{n\times n}$ be a nonnegative matrix. Then the
following hold for $R$. 
\begin{enumerate}
\item $R$ has some nonnegative eigenvalue $r$, such that all other eigenvalues
have magnitude at most $r$, and $R$ has nonnegative left and right
eigenvectors $u$ and $v$ for $r$. 
\item If $R$ has some \emph{positive} left and right eigenvectors $u$
and $v$ for some eigenvalue $\lambda$, then $\lambda=r$. 
\item If $R$ is irreducible, then $r$ is positive and simple (unique),
$u$ and $v$ are positive and unique, and all other eigenvalues have
real part strictly less than $r$. 
\end{enumerate}
\end{thm}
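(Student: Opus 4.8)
The plan is to derive all three parts from the spectral radius $r := \rho(R)$, treating first the strictly positive case and then general nonnegative $R$ by a perturbation-and-compactness argument. For Part 1 with $R$ entrywise positive, the classical route is to apply Brouwer's fixed point theorem to the continuous self-map $x \mapsto Rx/\|Rx\|_{1}$ of the probability simplex $\Delta = \{x\ge 0:\sum_{i}x_{i}=1\}$ (well-defined since $Rx>0$ for $x\in\Delta$); the fixed point is a strictly positive right eigenvector $v$ with positive eigenvalue $\mu$, and the same applied to $R^{T}$ gives a strictly positive left eigenvector $u$. One then identifies $\mu$ with $r$ using the elementary entrywise domination $|\lambda|\,|z|\le R|z|$ valid for every eigenpair $(\lambda,z)$, paired against the positive vector $u$ (this simultaneously shows every eigenvalue has magnitude at most $r$). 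Alternatively this step can be done via the Collatz--Wielandt variational characterization, at the cost of an upper-semicontinuity/attainment argument. For general nonnegative $R$, I would apply the positive case to $R_{\varepsilon}:=R+\varepsilon J$ ($J$ all-ones), obtaining $u_{\varepsilon},v_{\varepsilon}\in\Delta$ and $r_{\varepsilon}=\rho(R_{\varepsilon})$, then extract a subsequence $\varepsilon\to 0$ along which $u_{\varepsilon}\to u$, $v_{\varepsilon}\to v$ (in $\Delta$, hence nonnegative and nonzero) and $r_{\varepsilon}\to r'$; passing to the limit gives $Rv=r'v$ and $R^{T}u=r'u$, and since $R^{k}\le R_{\varepsilon}^{k}$ entrywise, Gelfand's formula yields $\rho(R)\le\rho(R_{\varepsilon})=r_{\varepsilon}$, so $r'\ge\rho(R)$, while $r'$ being an eigenvalue of $R$ gives $r'\le\rho(R)$; hence $r'=r$. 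This proves Part 1.

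Part 2 is then immediate: if $v'>0$ satisfies $Rv'=\lambda v'$ then $\lambda$ is real (as $\lambda v'=Rv'$ is real and $v'\ne 0$ is real), and pairing with the nonnegative nonzero left eigenvector $u$ for $r$ from Part 1 gives $\lambda\langle u,v'\rangle=\langle R^{T}u,v'\rangle=r\langle u,v'\rangle$ with $\langle u,v'\rangle=\sum_{i}u_{i}v'_{i}>0$, so $\lambda=r$. For Part 3, assume $R$ irreducible. Strong connectivity forces $\operatorname{tr}(R^{k})>0$ for some $k\le n$, so $r>0$; it also makes $(I+R)^{n-1}$ entrywise positive, so applying it to the nonnegative Perron vector $v$ gives $(1+r)^{n-1}v=(I+R)^{n-1}v>0$, whence $v>0$ (and symmetrically $u>0$). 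Geometric simplicity: any real eigenvector for $r$ not proportional to $v$ can be combined with $v$ to produce a nonnegative, nonzero eigenvector for $r$ with a vanishing coordinate, contradicting the strict positivity just shown; this also gives uniqueness of $u,v$ up to scaling. Algebraic simplicity is the one cute step: a Jordan block at $r$ would yield $w$ with $(R-rI)w=v$, and pairing with the strictly positive $u$ gives the contradiction $0=\langle u,(R-rI)w\rangle=\langle u,v\rangle>0$. Finally, any other eigenvalue $\lambda$ satisfies $\re\lambda\le|\lambda|\le r$, and $\re\lambda=r$ would force $|\lambda|=\re\lambda=r$, i.e.\ $\lambda=r$; hence $\re\lambda<r$.

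I expect the main obstacle to be Part 1 for possibly-reducible $R$: the existence of the Perron eigenvector in the positive case is the only genuinely non-elementary ingredient (Brouwer, or an attainment argument for Collatz--Wielandt), and the limiting argument requires knowing that the spectral radius is monotone under entrywise domination of nonnegative matrices. Once Part 1 and the strictly positive left eigenvector are in hand, Parts 2 and 3 follow by short pairing arguments and the positivity-boosting trick with $(I+R)^{n-1}$.
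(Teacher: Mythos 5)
The paper does not prove this theorem: it is cited as the classical Perron--Frobenius result (Perron 1907, Frobenius 1912) and invoked as background, so there is no in-paper proof to compare your attempt against. That said, your proof is correct and complete, and it follows one of the standard textbook routes. A few small observations. In Part~1, the identification $\mu=\mu'$ (that the Brouwer eigenvalues for $R$ and $R^{T}$ coincide) is implicit in your domination/pairing argument, but it would be worth stating explicitly before concluding $\mu=\rho(R)$. In the perturbation step you could alternatively just invoke continuity of the spectrum to get $r_{\varepsilon}\to\rho(R)$, but your Gelfand/monotonicity route is more self-contained and avoids any appeal to Rouch\'e-type results, which is a reasonable choice. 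In Part~3, your geometric-simplicity argument (produce a nonnegative eigenvector for $r$ with a vanishing coordinate and contradict the $(I+R)^{n-1}$ positivity boost) should note, as a preliminary reduction, that one may take the competing eigenvector to be real, since $r$ is real and $R$ is real; and your existence of a short closed walk giving $\operatorname{tr}(R^{k})>0$ for some $k\le n$ is fine, though one can argue $r>0$ even more directly from the fact that irreducible $R$ cannot be nilpotent. Your final observation --- that the only genuinely nonelementary ingredient is the existence statement in Part~1 (via Brouwer, or an attainment argument for Collatz--Wielandt), together with monotonicity of the spectral radius under entrywise domination --- is exactly right, and Parts~2 and~3 indeed reduce to short pairing arguments once a strictly positive left eigenvector is available.
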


We denote the all $1$'s vector by $\boldsymbol{1}$, and note that
for a doubly stochastic matrix $A$, $\boldsymbol{1}$ is both the
left and right eigenvector with eigenvalue 1. We say that $1$ is
the \emph{trivial (or stochastic) eigenvalue}, and $\boldsymbol{1}$
is the \emph{trivial (or stochastic) eigenvector, }of $A$. All other
eigenvalues of $A$ (corresponding to eigenvectors that are not trivial)
will be called \emph{nontrivial} \emph{(or nonstochastic) eigenvalues
}of $A$. Similarly, by Perron-Frobenius (Theorem \ref{thm:(Perron-Frobenius)},
part 1), a nonnegative matrix $R$ will have a simple nonnegative
eigenvalue $r$ such that all eigenvalues have magnitude at most $r$,
and it will be called the \emph{trivial} or PF eigenvalue of $R$,
and all other eigenvalues of $R$ will be called \emph{nontrivial.
} The left and right eigenvectors corresponding to $r$ will be called
the \emph{trivial or PF left eigenvector} and \emph{trivial or PF
right eigenvector}. This leads us to the following definition. 
\begin{defn}
\emph{(Second eigenvalue)} If $A$ is a doubly stochastic matrix,
then $\lambda_{2}(A)$ is the nontrivial eigenvalue of $A$ with the
maximum real part, and $\lambda_{m}(A)$ is the nontrivial eigenvalue
that is largest in magnitude. Similarly, if $R$ is any general nonnegative
matrix with PF eigenvalue 1, then $\lambda_{2}(R)$ is the nontrivial
eigenvalue with the maximum real part, i.e., $1=\lambda_{1}(R)\geq\re\lambda_{2}(R)\geq\cdots\geq\re\lambda_{n}(R)\geq-1$,
and $\lambda_{m}(R)$ is the nontrivial eigenvalue that is largest
in magnitude. 
\end{defn}

We will also consider singular values of nonnegative matrices $A$
with identical positive left and right eigenvector $w$ for PF eigenvalue
1, and denote them as $1=\sigma_{1}(A)\geq\sigma_{2}(A)\geq\cdots\geq\sigma_{n}(A)\geq0$
(see Lemma \ref{lem:transRtoA} for proof of $\sigma_{1}(A)=1$).
We denote $(i,j)$'th entry of $M\in\mathbb{C}^{n\times n}$ by $M(i,j)$
or $M_{i,j}$ depending on the importance of indices in context, denote
the conjugate-transpose of $M$ as $M^{*}$ and the transpose of $M$
as $M^{T}$. Any $M\in\mathbb{C}^{n\times n}$ has a \emph{Schur decomposition}
(see, e.g., \cite{lax07}) $M=UTU^{*}$ where $T$ is an upper triangular
matrix whose diagonal entries are the eigenvalues of $M$, and $U$
is a unitary matrix. When we write ``vector'' we mean by default
a column vector. For a vector $v$, we write $v(i)$ or $v_{i}$ to
denote its $i$'th entry. For any two vectors $x,y\in\mathbb{C}^{n}$,
we use the standard \emph{inner product }$\langle x,y\rangle=\sum_{i=1}^{n}x_{i}^{*}\cdot y_{i}$
defining the norm $\|x\|_{2}=\sqrt{\langle x,x\rangle}$. We write
$u\perp v$ to indicate that $\langle u,v\rangle=0$. Note that $\langle x,My\rangle=\langle M^{*}x,y\rangle$.
We denote the operator norm of $M$ by $\|M\|_{2}=\max_{u:\|u\|_{2}=1}\|Mu\|_{2}$,
and recall that the operator norm is at most the Frobenius norm, i.e.,
$\|M\|_{2}\leq\sqrt{\sum_{i,j}|M_{i,j}|^{2}}.$ We write $D_{u}$
for the diagonal matrix whose diagonal contains the vector $u$. Recall
the Courant-Fischer variational characterization of eigenvalues for
symmetric real matrices, applied to the second eigenvalue: 
\[
\max_{u\perp v_{1}}\frac{\langle u,Mu\rangle}{\langle u,u\rangle}=\lambda_{2}(M),
\]
where $v_{1}$ is the eigenvector for the largest eigenvalue of $M$.
We will use the symbol $J$ for the all $1$'s matrix divided by $n$,
i.e., $J=\frac{1}{n}\mathbf{1}\cdot\mathbf{1}^{T}$.

We say that any subset $S\subseteq[n]$ is a \emph{cut}, denote its
complement by $\overline{S}$, and denote the \emph{characteristic
vector of a cut} as $\boldsymbol{1}_{S}$, where $\boldsymbol{1}_{S}(i)=1$
if $i\in S$ and 0 otherwise. 
\begin{defn}
\emph{\label{def:(Edge-expansion-of}(Edge expansion of doubly stochastic
matrices)} For a doubly stochastic matrix $A$, the \emph{edge expansion
of the cut $S$} is defined as 
\[
\phi_{S}(A):=\dfrac{\langle\mathbf{1}_{S},A\mathbf{1}_{\overline{S}}\rangle}{\min\{\langle\mathbf{1}_{S},A\mathbf{1}\rangle,\langle\mathbf{1}_{\overline{S}},A\mathbf{1}\rangle\}}
\]
and the \emph{edge expansion of $A$} is defined as 
\[
\phi(A)=\min_{S\subseteq[n]}\phi_{S}(A)=\min_{S\subseteq[n]}\dfrac{\langle\mathbf{1}_{S},A\mathbf{1}_{\overline{S}}\rangle}{\min\{\langle\mathbf{1}_{S},A\mathbf{1}\rangle,\langle\mathbf{1}_{\overline{S}},A\mathbf{1}\rangle\}}=\min_{S,|S|\leq n/2}\frac{\sum\limits _{i\in S,j\in\overline{S}}A_{i,j}}{|S|}.
\]
\end{defn}

We wish to extend these notions to general nonnegative matrices $R$.
Since eigenvalues and singular values of real matrices remain unchanged
whether we consider $R$ or $R^{T}$, the same should hold of a meaningful
definition of edge expansion. However, note that Definition \ref{def:(Edge-expansion-of}
has this independence only if the matrix is Eulerian, i.e., $R\mathbf{1}=R^{T}\mathbf{1}$.
Thus, to define edge expansion for general matrices, we transform
$R$ using its left and right eigenvectors $u$ and $v$ for eigenvalue
1 to obtain $D_{u}RD_{v}$, which is indeed Eulerian, since 
\[
D_{u}RD_{v}\mathbf{1}=D_{u}Rv=D_{u}v=D_{u}D_{v}\mathbf{1}=D_{v}D_{u}\mathbf{1}=D_{v}u=D_{v}R^{T}u=D_{v}R^{T}D_{u}\mathbf{1}.
\]
Since $D_{u}RD_{v}$ is Eulerian, we can define the edge expansion
of $R$ similar to that for doubly stochastic matrices:
\begin{defn}
\label{def:gen-edge-expansion} \emph{(Edge expansion of nonnegative
matrices)} Let $R\in\mathbb{R}^{n\times n}$ be a nonnegative matrix
with PF eigenvalue $1$. If there are no positive (i.e., \textit{everywhere}
positive) left and right eigenvectors $u$ and $v$ for eigenvalue
1, then define the edge expansion $\phi(R)=0$. Else, let $u$ and
$v$ be any (see Lemma \ref{lem:phiR-ind-uv} for justification) positive
left and right eigenvectors for eigenvalue 1, normalized so that $\langle u,v\rangle=1$.
The \emph{edge expansion of the cut $S$} is defined as 
\begin{equation}
\phi_{S}(R):=\dfrac{\langle\mathbf{1}_{S},D_{u}RD_{v}\mathbf{1}_{\overline{S}}\rangle}{\min\{\langle\mathbf{1}_{S},D_{u}RD_{v}\mathbf{1}\rangle,\langle\mathbf{1}_{\overline{S}},D_{u}RD_{v}\mathbf{1}\rangle\}}\label{phiSR}
\end{equation}
and the \emph{edge expansion of $R$} is defined as 
\[
\phi(R)=\min_{S\subseteq[n]}\phi_{S}(R)=\min_{S\subseteq[n],\sum_{i\in S}u_{i}\cdot v_{i}\leq\frac{1}{2}}\dfrac{\langle1_{S},D_{u}RD_{v}1_{\overline{S}}\rangle}{\langle1_{S},D_{u}D_{v}\mathbf{1}\rangle}=\min_{S\subseteq[n],\sum_{i\in S}u_{i}\cdot v_{i}\leq\frac{1}{2}}\dfrac{\sum\limits _{i\in S,j\in\overline{S}}R_{i,j}\cdot u_{i}\cdot v_{j}}{\sum\limits _{i\in S}u_{i}\cdot v_{i}}.
\]
\end{defn}

\begin{lem}
\label{lem:phiR-ind-uv}Let $R\in\mathbb{R}^{n\times n}$ be a nonnegative
matrix with PF eigenvalue $1$. Then the value of $\phi(R)$ according
to Definition \ref{def:gen-edge-expansion} is independent of the
choice of the specific (left and right) eigenvectors $u$ and $v$
for eigenvalue 1 of $R$. 
\end{lem}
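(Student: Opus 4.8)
The plan is to reduce the claim to a scaling invariance together with a short structural dichotomy for $R$.

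First, I would observe that for every fixed cut $S$ the quantity $\phi_{S}(R)$ depends on $u$ and $v$ only through their positive scaling classes: its numerator $\langle\mathbf{1}_{S},D_{u}RD_{v}\mathbf{1}_{\overline{S}}\rangle=\sum_{i\in S,\,j\in\overline{S}}R_{ij}\,u_{i}v_{j}$ and its denominator $\min\{\sum_{i\in S}u_{i}v_{i},\ \sum_{i\in\overline{S}}u_{i}v_{i}\}$ are each homogeneous of degree one in $u$ and in $v$ separately, so replacing $(u,v)$ by $(\alpha u,\beta v)$ with $\alpha,\beta>0$ multiplies both by the common factor $\alpha\beta$ and leaves $\phi_{S}(R)$ unchanged. (The normalization $\langle u,v\rangle=1$ in Definition \ref{def:gen-edge-expansion} only arranges that the denominator equals $\sum_{i\in S}u_{i}v_{i}$ exactly when $\sum_{i\in S}u_{i}v_{i}\le\frac{1}{2}$, and is immaterial here.) Hence $\phi(R)=\min_{S}\phi_{S}(R)$ depends only on the rays $\mathbb{R}_{>0}u$ and $\mathbb{R}_{>0}v$.

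Second, I would bring in the Eulerian matrix $E:=D_{u}RD_{v}$; as computed just before Definition \ref{def:gen-edge-expansion}, $E\mathbf{1}=E^{T}\mathbf{1}=D_{u}D_{v}\mathbf{1}$, so $E$ is balanced and has the same digraph of positive entries as $R$. If $R$ is irreducible, Perron-Frobenius (Theorem \ref{thm:(Perron-Frobenius)}, part 3) makes the left and right eigenvectors for eigenvalue $1$ unique up to a positive scalar, so by the previous paragraph $\phi(R)$ is the same for all admissible $(u,v)$. If $R$ is not irreducible, i.e. its digraph is not strongly connected, then --- because a weakly connected balanced digraph is automatically strongly connected (otherwise, summing the balance condition over a sink of the condensation DAG gives a contradiction) --- its digraph is not even weakly connected, so after a simultaneous row/column permutation $R$ is block diagonal with $m\ge2$ blocks and no edges between distinct blocks. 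Then for any admissible positive $(u,v)$ normalized by $\langle u,v\rangle=1$, the $m\ge2$ block masses $\sum_{i\in C_{\ell}}u_{i}v_{i}$ are positive and sum to $1$, hence the smallest is $\le\frac{1}{2}$; taking $S$ to be that block gives an admissible cut with $\langle\mathbf{1}_{S},D_{u}RD_{v}\mathbf{1}_{\overline{S}}\rangle=0$, so $\phi_{S}(R)=0$ and $\phi(R)=0$ irrespective of the choice of $(u,v)$. The two cases together prove the lemma.

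I expect the only real content to be in the reducible case. The scaling argument instantly neutralizes whatever freedom remains when the eigenvalue-$1$ eigenspace is one-dimensional; the crux is to recognize that any genuine extra freedom in choosing $(u,v)$ forces $E=D_{u}RD_{v}$ to be a disconnected Eulerian matrix --- equivalently, $R$ to split into $\ge2$ non-communicating blocks --- which pins $\phi(R)$ to $0$. The auxiliary fact that a balanced weakly connected digraph is strongly connected is standard and can be either cited or dispatched by the one-line condensation argument noted above.
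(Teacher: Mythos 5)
Your proposal is correct, and it takes a genuinely different and arguably tidier route than the paper's. The paper proceeds by a four-way structural case analysis of the underlying digraph $G$ of $R$: (i) some weakly connected component lacks eigenvalue $1$; (ii) all components have eigenvalue $1$ but some weakly connected component is not strongly connected, where a short explicit computation (writing such a component as $\left[\begin{smallmatrix}A & B\\ 0 & C\end{smallmatrix}\right]$ and deriving $\langle x_{1},By_{2}\rangle=0$) shows no positive pair $(u,v)$ exists; (iii) $G$ is one strongly connected component, so Perron--Frobenius gives uniqueness up to scaling; (iv) $k\ge 2$ strongly connected components each with eigenvalue $1$, where choosing $S$ to be a component forces $\phi(R)=0$. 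Your proof replaces cases (ii) and (iv) with a single observation: once a positive pair exists, $E=D_{u}RD_{v}$ is balanced, and a balanced weakly connected digraph is strongly connected (the condensation-sink argument), so reducibility forces $E$ --- and hence $R$ --- to be block diagonal with $\ge 2$ blocks, immediately giving $\phi(R)=0$. You also make explicit the scaling invariance of $\phi_{S}(R)$ under the residual $(u,v)\mapsto(\alpha u,\alpha^{-1}v)$ freedom, which the paper treats as obvious. The one slip is in phrasing rather than substance: you write unconditionally that if $R$ is not irreducible then ``its digraph is not even weakly connected,'' but that conclusion uses the balance of $E$, which presupposes a positive pair $(u,v)$. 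This is false in general for reducible $R$ (a weakly but not strongly connected $R$ with PF eigenvalue $1$ simply has no positive pair --- this is precisely the paper's case (ii)); the claim should be stated conditionally, and the ``no positive pair exists, so $\phi(R)=0$ by definition with nothing to prove'' case made explicit before entering the dichotomy. With that small restructuring your argument is complete and notably shorter than the paper's.
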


\begin{proof}
Let $G$ be the underlying unweighted directed graph for $R$, where
there is an edge $(u,v)$ in $G$ if and only if $R_{u,v}>0$. We
prove the lemma based on the structure of $G$. Let $G$ be maximally
partitioned into $k$ weakly connected components. 
\begin{enumerate}
\item If $G$ has some weakly connected component which does \emph{not}
have a 1 eigenvalue, then for any pair of left and right eigenvectors
$u$ and $v$ for eigenvalue 1, there will be at least one entry $i$
such that $u_{i}=0$ or $v_{i}=0$. For such matrices $R$, from Definition
\ref{def:gen-edge-expansion}, $\phi(R)=0$. 
\item If all weakly connected components of $G$ have eigenvalue 1, but
there is some weakly connected component $S$ that is not strongly
connected, then there is no positive pair of eigenvectors $u$ and
$v$ for $R$, or even for $S$. Observe that $S=\left[\begin{array}{cc}
A & B\\
0 & C
\end{array}\right]$ with $B\not=0$, else $G$ is not maximally partitioned. For the
sake of contradiction, let $x$ and $y$ be positive left and right
eigenvectors for eigenvalue 1 of $S$. From $Sy=y$ and $S^{T}x=x$,
we get that $Ay_{1}+By_{2}=y_{1}$ and $A^{T}x_{1}=x_{1}$ with $x$
and $y$ partitioned into $x_{1},x_{2}$ and $y_{1},y_{2}$ based
on the sizes of $A,B,C$. Thus, 
\[
\langle x_{1},y_{1}\rangle=\langle x_{1},Ay_{1}+By_{2}\rangle=\langle A^{T}x_{1},y_{1}\rangle+\langle x_{1},By_{2}\rangle=\langle x_{1},y_{1}\rangle+\langle x_{1},By_{2}\rangle
\]
implying $\langle x_{1},By_{2}\rangle=0$ which is a contradiction
since $x_{1}$ and $y_{1}$ are positive and there is some entry in
$B$ which is not 0. Thus, since every pair of eigenvectors $x$ and
$y$ for eigenvalue 1 of $S$ has some entry $i$ with $x_{i}=0$
or $y_{i}=0$, every pair of (left and right) eigenvectors $u$ and
$v$ for $R$ (for eigenvalue 1) has some entry $i$ which is 0, and
so by Definition \ref{def:gen-edge-expansion}, $\phi(R)=0$. 
\item If $G$ consists of \emph{one} strongly connected component (i.e.,
$k=1$), then by Perron-Frobenius (Theorem \ref{thm:(Perron-Frobenius)},
part 3), there is a \emph{unique} (up to scaling) pair of positive
eigenvectors $u$ and and $v$ for eigenvalue 1.
\item The remaining case is that $G$ has $k\geq2$ strongly connected components
each with eigenvalue 1. By Perron-Frobenius (Theorem \ref{thm:(Perron-Frobenius)}),
there is some pair of positive left and right eigenvectors $u$ and
$v$ for eigenvalue 1 (obtained by concatenating the positive left
and right eigenvectors for each component individually). Choose the
set $S$ to be one of the components (the one for which the denominator
of equation \eqref{phiSR} is at most $\frac{1}{2}$), then the numerator
of equation \eqref{phiSR} will be $0$, and thus $\phi(R)=0$ even
in this case, corresponding to the existence of a strict subset of
vertices with no expansion. 
\end{enumerate}
Thus, Definition \ref{def:gen-edge-expansion} for $\phi(R)$ does
not depend on the specific choice of $u$ and $v$. 
\end{proof}
The Perron-Frobenius theorem (Theorem \ref{thm:(Perron-Frobenius)},
part 3) can now be restated in terms of $\phi(R)$ and $\re\lambda_{2}(R)$
as follows. 
\begin{lem}
\label{lem:phi-Perron-Frobenius}(Perron-Frobenius, part 3 of Theorem
\ref{thm:(Perron-Frobenius)}, restated) Let $R$ be a nonnegative
matrix with PF eigenvalue 1. If $\phi(R)>0$, then $\re\lambda_{2}(R)<1$. 
\end{lem}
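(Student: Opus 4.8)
The plan is to reduce the statement to part 3 of the Perron--Frobenius theorem (Theorem \ref{thm:(Perron-Frobenius)}) by observing that the hypothesis $\phi(R)>0$ forces $R$ to be irreducible, i.e., forces the underlying directed graph $G$ of $R$ (with an edge $(i,j)$ whenever $R_{i,j}>0$) to be a single strongly connected component. Granting that, irreducibility together with the standing normalization that the PF eigenvalue equals $1$ immediately yields, via part 3 of Theorem \ref{thm:(Perron-Frobenius)}, that $1$ is simple and every other eigenvalue has real part strictly less than $1$, which is exactly $\re\lambda_2(R)<1$.

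So the only thing to argue is the implication $\phi(R)>0 \Rightarrow G$ strongly connected, and I would do this by walking through the four cases of the proof of Lemma \ref{lem:phiR-ind-uv}. If $G$ is \emph{not} a single strongly connected component, then it falls into case 1, 2, or 4 there: either some weakly connected component of $G$ has no eigenvalue $1$, in which case no everywhere-positive pair of left/right eigenvectors for eigenvalue $1$ exists and $\phi(R)=0$ by Definition \ref{def:gen-edge-expansion}; or every weakly connected component has eigenvalue $1$ but one of them is not strongly connected, in which case (by the block-triangular argument in that proof) again no positive eigenvector pair exists and $\phi(R)=0$; or $G$ splits into $k\ge 2$ strongly connected components each carrying eigenvalue $1$, in which case choosing $S$ to be one such component (the one making the denominator of $\phi_S(R)$ at most $\tfrac12$) kills the numerator of $\phi_S(R)$, so $\phi(R)=0$. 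In each case $\phi(R)=0$, contradicting the hypothesis; hence $G$ must be strongly connected.

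Equivalently, and perhaps more transparently, when everywhere-positive eigenvectors $u,v$ for eigenvalue $1$ do exist one can argue directly: $\phi(R)>0$ says that $\sum_{i\in S,\, j\in\overline{S}} R_{i,j}\, u_i v_j>0$ for every nonempty proper cut $S$, and since $u,v>0$ this is precisely the statement that $G$ has an edge from $S$ to $\overline{S}$ for every such $S$ --- the standard combinatorial characterization of strong connectedness. I do not expect a real obstacle here: the content is just a careful invocation of the case analysis already present in Lemma \ref{lem:phiR-ind-uv}, and the only points needing a moment's care are checking that the case division is exhaustive (any $G$ is either a single strongly connected component or lands in cases 1, 2, or 4) and that the conventional value $\phi(R):=0$ is applied in exactly those cases where no everywhere-positive pair of eigenvectors exists.
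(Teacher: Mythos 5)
Your proof is correct and follows the same reasoning the paper clearly intends: the paper gives no explicit proof, labeling the lemma a ``restatement'' of Theorem~\ref{thm:(Perron-Frobenius)}(3), and the justification is precisely the case analysis in Lemma~\ref{lem:phiR-ind-uv} showing that $\phi(R)>0$ can only occur when $G$ is a single strongly connected component. Your closing direct argument (that $\phi(R)>0$ together with positive $u,v$ forces every nonempty proper cut to have an outgoing edge, which is the cut-characterization of strong connectivity) is a nice, slightly cleaner way to package the same content.
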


Further, we obtain the following converse of Lemma \ref{lem:phi-Perron-Frobenius}. 
\begin{lem}
\label{lem:Converse-phi-per-frob}(Converse of Lemma \ref{lem:phi-Perron-Frobenius})
Let $R$ be a nonnegative matrix with PF eigenvalue 1. If $\re\lambda_{2}(R)<1$,
\emph{and} there exists a pair of positive left and right eigenvectors
$u$ and $v$ for eigenvalue 1 of $R$, then $\phi(R)>0$. 
\end{lem}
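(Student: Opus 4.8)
The plan is to prove the contrapositive in the following form: if positive left and right eigenvectors $u,v$ for eigenvalue $1$ exist and $\phi(R)=0$, then $\re\lambda_2(R)=1$. So assume $\phi(R)=0$. Since such $u,v$ exist, we are in the ``Else'' branch of Definition \ref{def:gen-edge-expansion}, and the minimum defining $\phi(R)$ equals $0$; as it ranges over finitely many cuts it is attained, say at a cut $S$ with $0<\sum_{i\in S}u_iv_i\le\tfrac12<1=\langle u,v\rangle$, so $S$ is a \emph{nonempty proper} subset, and $\sum_{i\in S,\,j\in\overline S}R_{ij}u_iv_j=0$. Because $u$ and $v$ are everywhere positive, every (nonnegative) term in this sum vanishes, hence $R_{ij}=0$ for all $i\in S,\ j\in\overline S$. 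Ordering the coordinates so that $S$ precedes $\overline S$, this means
\[
R=\begin{pmatrix}B & 0\\ C & D\end{pmatrix},
\]
block lower-triangular with square diagonal blocks $B$ (rows/columns indexed by $S$) and $D$ (indexed by $\overline S$), both of size at least $1$.

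Next I would use that the spectrum of a block-triangular matrix, with multiplicities, is the union of the spectra of its diagonal blocks, so it suffices to show that $1$ is an eigenvalue of each of $B$ and $D$: that gives the eigenvalue $1$ of $R$ algebraic multiplicity $\ge 2$, and since $1$ is the PF (hence spectral-radius) eigenvalue of $R$, every eigenvalue has real part $\le 1$, so the eigenvalue of second-largest real part is again $1$, i.e.\ $\re\lambda_2(R)=1$ --- contradicting the hypothesis. To see $1\in\operatorname{spec}(B)$: writing $v=(v_S,v_{\overline S})$ in the block coordinates, $Rv=v$ and the block structure give $Bv_S=v_S$ with $v_S>0$. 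Apply Perron--Frobenius (Theorem \ref{thm:(Perron-Frobenius)}, part 1) to the nonnegative matrix $B$: it has spectral radius $\rho(B)\ge 0$ with a nonzero nonnegative left eigenvector $y$, $B^Ty=\rho(B)y$; then $\langle y,v_S\rangle=\langle y,Bv_S\rangle=\langle B^Ty,v_S\rangle=\rho(B)\langle y,v_S\rangle$, and $\langle y,v_S\rangle>0$ since $y\ge 0$, $y\neq 0$, $v_S>0$, forcing $\rho(B)=1$; as $\rho(B)$ is itself an eigenvalue of $B$, we get $1\in\operatorname{spec}(B)$. Symmetrically, $R^Tu=u$ together with the block-upper-triangular form of $R^T$ gives $D^Tu_{\overline S}=u_{\overline S}$ with $u_{\overline S}>0$, and the same argument yields $1=\rho(D^T)=\rho(D)\in\operatorname{spec}(D)$.

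The only step that needs a little care is this ``baby Perron--Frobenius'' fact applied to the diagonal blocks --- that a nonnegative matrix possessing a strictly positive eigenvector has the corresponding eigenvalue equal to its spectral radius --- together with the bookkeeping that the constraint $0<\sum_{i\in S}u_iv_i\le\tfrac12$ forces $S$ to be nonempty and proper, so that $B$ and $D$ are genuine nonempty blocks; everything else is routine. (An essentially equivalent alternative: from $\langle\mathbf 1_S,D_uRD_v\mathbf 1_{\overline S}\rangle=0$ use that $E:=D_uRD_v$ is Eulerian to conclude $\langle\mathbf 1_{\overline S},E\mathbf 1_S\rangle=0$ as well, so $E$, and hence $R$, is block-\emph{diagonal} across $(S,\overline S)$; then $D_v\mathbf 1_S$ and $v$ are two linearly independent right eigenvectors of $R$ for eigenvalue $1$, again giving $\re\lambda_2(R)=1$.)
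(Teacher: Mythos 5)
Your main argument is correct, and it takes a genuinely different route from the paper's. You only establish that the $(S,\overline S)$ block of $R$ vanishes, obtaining a block \emph{lower-triangular} form, and then apply Perron--Frobenius separately to each diagonal block (pairing the strictly positive eigenvector inherited by that block against the block's nonnegative PF left eigenvector to pin the block's spectral radius at $1$). That deduces $1$ appears twice in $\operatorname{spec}(R)$ from triangularity alone. The paper instead uses the fact that $E=D_u R D_v$ is Eulerian to additionally conclude that the $(\overline S,S)$ block also vanishes, so $R$ is block \emph{diagonal}; then $v_S$ and $v_{\overline S}$ (zero-padded) are immediately two independent right eigenvectors for eigenvalue $1$, and one is done with no further appeal to Perron--Frobenius. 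Your parenthetical ``alternative'' at the end is essentially the paper's proof verbatim. The trade-off: your primary route avoids invoking the Eulerian identity, at the cost of two applications of a ``baby'' Perron--Frobenius fact to the sub-blocks; the paper's route is shorter once one remembers $D_uRD_v$ is Eulerian, a fact already established right before Definition \ref{def:gen-edge-expansion}. Both arguments correctly handle the small but necessary bookkeeping point you flag, namely that the minimizing cut $S$ is nonempty and proper because $0<\sum_{i\in S}u_iv_i\le\tfrac12<\langle u,v\rangle=1$.
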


\begin{proof}
We show the contrapositive. Let $R$ be as stated and let $\phi(R)=0$.
From Definition \ref{def:gen-edge-expansion}, if there are no positive
eigenvectors $u$ and $v$ for eigenvalue 1, the lemma holds. So assume
there are some positive $u$ and $v$ for eigenvalue 1 of $R$. Since
$\phi(R)=0$, there is some set $S$ for which $\phi_{S}(R)=0$, or
$\sum_{i\in S,j\in\overline{S}}R_{i,j}\cdot u_{i}\cdot v_{j}=0$.
But since $u_{i}>0$ and $v_{i}>0$ for each $i$, and since $R$
is nonnegative, it implies that for each $i\in S,j\in\overline{S}$,
$R_{i,j}=0$. Further, since $D_{u}RD_{v}$ is Eulerian, i.e. $D_{u}RD_{v}\mathbf{1}=D_{v}R^{T}D_{u}\mathbf{1}$,
it implies that $\sum_{i\in\overline{S},j\in S}R_{i,j}\cdot u_{i}\cdot v_{j}=0$,
further implying that $R_{i,j}=0$ for each $i\in\overline{S},j\in S$.
As a consequence, $v$ can be rewritten as two vectors $v_{S}$ and
$v_{\overline{S}}$, where $v_{S}(i)=v(i)$ if $i\in S$ and $v_{S}(i)=0$
otherwise, and similarly $v_{\overline{S}}$. Similarly, split $u$
into $u_{S}$ and $u_{\overline{S}}$. Note that $v_{S}$ and $v_{\overline{S}}$
are linearly independent (in fact, orthogonal), and both are right
eigenvectors for eigenvalue 1 (similarly $u_{S}$ and $u_{\overline{S}}$
as left eigenvectors). Thus, this implies that eigenvalue 1 for $R$
has multiplicity at least 2, and thus $\lambda_{2}(R)=1$, as required. 
\end{proof}
The upper and lower bounds on $\phi(R)$ in Theorem \ref{thm:gen-phi-lambda2}
are \emph{quantitative} versions of Lemmas \ref{lem:phi-Perron-Frobenius}
and \ref{lem:Converse-phi-per-frob} respectively.

We note that Cheeger's inequalities hold not only for any symmetric
doubly stochastic matrix, but also for any nonnegative matrix $R$
which satisfies detailed balance. We say that a nonnegative matrix
$R$ with positive left and right eigenvectors $u$ and $v$ for PF
eigenvalue 1 satisfies \emph{detailed balance} if $D_{u}RD_{v}$ is
symmetric, which generalizes the usual definition of detailed balance
(or reversibility) for stochastic matrices. We first note that if
$R$ satisfies the condition of detailed balance, then $R$ has all
real eigenvalues. To see this, let $W=D_{u}^{\frac{1}{2}}D_{v}^{-\frac{1}{2}}$
where the inverses are well-defined since we assume $u$ and $v$
are positive (else $\phi=0$ by definition), and $A=WRW^{-1}$ where
$A$ has same eigenvalues as $R$. For $w=D_{u}^{\frac{1}{2}}D_{v}^{\frac{1}{2}}\mathbf{1}$
which is both the left and right eigenvector of $A$ for eigenvalue
1, the detailed balance condition translates to $D_{w}AD_{w}=D_{w}A^{T}D_{w}$,
which implies $A=A^{T}$, which further implies that all eigenvalues
of $A$ (and $R$) are real. We can thus state Cheeger inequalities
for nonnegative matrices satisfying detailed balance. 
\begin{thm}
\emph{\label{thm:gen-Cheeger's-inequalities}(Cheeger's inequalities
for nonnegative matrices satisfying detailed balance)} Let $R$ be
a nonnegative matrix with PF eigenvalue 1 and positive left and right
eigenvectors $u$ and $v$ (else $\phi(R)=0$ by definition), and
let $R$ satisfy the condition of detailed balance, i.e. $D_{u}RD_{v}=D_{v}R^{T}D_{u}$.
Then 
\[
\frac{1-\lambda_{2}(R)}{2}\leq\phi(R)\leq\sqrt{2\cdot(1-\lambda_{2}(R))}.
\]
\end{thm}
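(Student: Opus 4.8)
The plan is to deduce both inequalities from the symmetric case via the diagonal similarity transforms already set up in the paragraph preceding the statement. Detailed balance presupposes that $u,v$ are positive (so $D_u^{-1},D_v^{-1}$ are defined) and, after normalizing $\langle u,v\rangle=1$, that $N:=D_uRD_v$ is symmetric; note $N$ is nonnegative and, since $N\mathbf{1}=D_uRv=D_uv=D_uD_v\mathbf{1}$, its $i$-th row sum is $d_i:=u_iv_i>0$, with $\sum_i d_i=1$. Two algebraic facts drive everything: (i) $A:=D_u^{1/2}D_v^{-1/2}RD_v^{1/2}D_u^{-1/2}=D_d^{-1/2}ND_d^{-1/2}$ is a \emph{symmetric} nonnegative matrix with the same spectrum as $R$ (so in particular every eigenvalue of $R$ is real and $\lambda_2(R)=\lambda_2(A)$); and (ii) by Definition~\ref{def:gen-edge-expansion}, $\phi(R)$ equals the minimum of $\bigl(\sum_{i\in S,\,j\in\overline{S}}N_{ij}\bigr)/d(S)$ over cuts $S$ with $d(S):=\sum_{i\in S}d_i\le \tfrac12$. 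Given (i), the upper bound needs no further work: since $\re\lambda_2(R)=\lambda_2(R)$, the upper bound of Theorem~\ref{thm:gen-phi-lambda2} immediately yields $\phi(R)\le\sqrt{2\cdot(1-\lambda_2(R))}$.

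For the lower bound I would run the ``easy half'' of Cheeger's inequality on $A$. Let $w:=D_u^{1/2}D_v^{1/2}\mathbf{1}$ be the positive top eigenvector of $A$; applying the Courant--Fischer characterization of $\lambda_2(A)$ recalled in the preliminaries and substituting $g=D_wf$ converts the Rayleigh quotient of $A$ into
\[
1-\lambda_2(R)\;=\;\min_{f\ \text{nonconstant}}\ \frac{\mathcal{E}(f,f)}{\operatorname{Var}_d(f)},\qquad \mathcal{E}(f,f):=\tfrac12\sum_{i,j}N_{ij}\,(f_i-f_j)^2,
\]
where $\operatorname{Var}_d(f):=\min_c\sum_i d_i(f_i-c)^2$, using $\sum_jN_{ij}=d_i$ to identify the numerator $\sum_i d_if_i^2-\sum_{i,j}N_{ij}f_if_j$ with $\mathcal{E}(f,f)$. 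Evaluating at $f=\mathbf{1}_S$ for a set $S$ attaining $\phi(R)$, with $d(S)\le\tfrac12$, gives $\mathcal{E}(\mathbf{1}_S,\mathbf{1}_S)=\sum_{i\in S,\,j\in\overline{S}}N_{ij}$ and $\operatorname{Var}_d(\mathbf{1}_S)=d(S)\,d(\overline{S})\ge d(S)/2$, so $1-\lambda_2(R)\le 2\phi(R)$, which is the claimed bound.

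I do not expect a genuine obstacle: the argument is bookkeeping together with the textbook lower bound of Cheeger's inequality for reversible chains (of which Theorem~\ref{thm:cheeg}, the case $d\equiv 1/n$, is the version stated here), and one could equally well invoke the reversible-chain version wholesale. The step needing care is the chain of diagonal conjugations $R\mapsto D_u^{1/2}D_v^{-1/2}RD_v^{1/2}D_u^{-1/2}$ and $R\mapsto D_uRD_v$: one must verify that each preserves nonnegativity, preserves the spectrum, and leaves the value of $\phi$ from Definition~\ref{def:gen-edge-expansion} unchanged; and it is precisely the normalization $\sum_i d_i=1$, via $\operatorname{Var}_d(\mathbf{1}_S)=d(S)d(\overline{S})$, that produces the constant $2$ rather than something $n$-dependent.
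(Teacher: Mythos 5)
Your reduction to a symmetric matrix is the same one the paper uses: conjugating by $D_u^{1/2}D_v^{-1/2}$ turns $R$ into the symmetric nonnegative matrix $A=D_d^{-1/2}ND_d^{-1/2}$ with the same spectrum and the same edge expansion, and the paper then treats the result as a direct consequence of the classical Cheeger inequality for reversible chains. Your lower-bound argument (the ``easy half'') is correct and is exactly the missing detail the paper leaves implicit: the identity $\mathcal{E}(f,f)=\sum_i d_if_i^2-\sum_{i,j}N_{ij}f_if_j$ (using $\sum_jN_{ij}=d_i$), the substitution $g=D_d^{1/2}f$ in Courant--Fischer, the test function $f=\mathbf{1}_S$, and the normalization $\sum_id_i=1$ giving $\mathrm{Var}_d(\mathbf{1}_S)=d(S)d(\overline S)\ge d(S)/2$ all check out and yield $1-\lambda_2(R)\le 2\phi(R)$.

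The one genuine problem is your primary proposal for the upper bound. You invoke the upper bound of Theorem~\ref{thm:gen-phi-lambda2}, but in the paper that bound is proved in Appendix~\ref{sec:Proof-of-gen-fied} \emph{by applying Theorem~\ref{thm:gen-Cheeger's-inequalities} to the symmetric matrix $M=\frac{1}{2}(A+A^T)$}. So citing Theorem~\ref{thm:gen-phi-lambda2} to prove Theorem~\ref{thm:gen-Cheeger's-inequalities} closes a cycle in the paper's dependency graph and is not admissible as written. The fix is precisely the alternative you mention in passing and should be promoted to the main argument: once $A$ is shown to be symmetric, nonnegative, with positive eigenvector $w=D_d^{1/2}\mathbf{1}$ for eigenvalue $1$, the inequality $\phi(A)\le\sqrt{2(1-\lambda_2(A))}$ is the hard direction of the classical Cheeger inequality for reversible chains (the general-stationary-distribution version of Theorem~\ref{thm:cheeg}, for which the paper cites \cite{dodziuk84,AlonM85,alon86}), and the claim for $R$ follows because $\phi(R)=\phi(A)$ and $\lambda_2(R)=\lambda_2(A)$. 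With that substitution your proof is complete and matches the paper's intent; without it, the upper bound is unsupported.
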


\section{\label{sec:Construction-of-non-expanding}Construction of doubly
stochastic matrices with small edge expansion and large spectral gap}

As discussed, it might have been tempting to think that $\Gamma(n)$
(see Definition \ref{def:gamma}) should be independent of the matrix
size $n$, since this holds for the symmetric case, and also for the
upper bound on $\phi$ for the general nonsymmetric doubly stochastic
case (Lemma \ref{lem:fied}). However, two known examples showed that
a mild dependence on $n$ cannot be avoided.

\subsection{\label{subsec:Known-Constructions}Known Constructions}

\noindent \textbf{Klawe-Vazirani construction:} The first is a construction
of Klawe~\cite{klawe1984limitations} -- these families of $d$-regular
undirected graphs have edge expansion $(\log n)^{-\gamma}$ for various
$0<\gamma<1$. However, there is a natural way in which to direct
the edges of these graphs and obtain a $(d/2)$-in, $(d/2)$-out -regular
graph, and it was noted by U.\ Vazirani \cite{UVaz-pers} in the
1980s that for this digraph $A$, which shares the same edge expansion
as Klawe's, all eigenvalues (except the stochastic eigenvalue) have
norm $\leq1/2$. Specifically, one construction is as follows: let
$n$ be an odd prime, and create the graph on $n$ vertices with in-degree
and out-degree 2 by connecting every vertex $v\in\mathbb{Z}/n$ to
two vertices, $1+v$ and $2v$. Dividing the adjacency matrix of the
graph by 2 gives a doubly stochastic matrix $A_{KV}$. It is simple
to see (by transforming to the Fourier basis over $\mathbb{Z}/n$)
that the characteristic polynomial of $A_{KV}$ is $x(x-1)((2x)^{n-1}-1)/(2x-1)$,
so apart from the trivial eigenvalue 1, $A_{KV}$ has $n-2$ nontrivial
eigenvalues $\lambda$ such that $|\lambda|=\frac{1}{2}$ and one
eigenvalue 0, and thus, $\re\lambda_{2}(A_{KV})\leq\frac{1}{2}$.
Further, upper bounding the edge expansion $\phi$ by the vertex expansion
bound (Theorem 2.1 in \cite{klawe1984limitations}), it follows that
for some constant $c$, 
\[
\Gamma(n)\leq\frac{\phi(A_{KV})}{1-\re\lambda_{2}(A_{KV})}\leq c\cdot\left(\frac{\log\log n}{\log n}\right)^{1/5}.
\]

\noindent \textbf{de Bruijn construction:} A second example is the
de Bruijn digraph \cite{de1946combinatorial}. This is if anything
even more striking: the doubly stochastic matrix (again representing
random walk along an Eulerian digraph) has edge expansion $\Theta(1/\log n)$
\cite{delorme1998spectrum}, yet \emph{all} the nontrivial eigenvalues
are 0. More specifically, define a special case of de Bruijn \cite{de1946combinatorial}
graphs as follows: Let $n=2^{k}$ for some integer $k$, and create
the graph of degree 2 on $n$ vertices by directing edges from each
vertex $v=(v_{1},v_{2},\ldots,v_{k})\in\{0,1\}^{k}$ to two vertices,
$(v_{2},v_{3},\ldots,v_{k},0)$ and $(v_{2},v_{3},\ldots,v_{k},1)$.
Dividing the adjacency matrix of the graph by 2 gives a doubly stochastic
matrix $A_{dB}$. Since this random walk completely forgets its starting
point after $k$ steps, every nontrivial eigenvalue of $A_{dB}$ is
0 (and each of its Jordan blocks is of dimension at most $k$). Further,
it was shown in \cite{delorme1998spectrum} that $\phi(A_{dB})\in\Theta(1/k)$,
and thus, 
\[
\Gamma(n)\leq\frac{\phi(A_{dB})}{1-\re\lambda_{2}(A_{dB})}\leq\frac{1}{k}=\frac{1}{\log n}.
\]

\noindent \textbf{Other literature -- Feng-Li construction:} We round
out this discussion by recalling that Alon and Boppana \cite{alon86,Nilli91}
showed that for any infinite family of $d$-regular undirected graphs,
the adjacency matrices, normalized to be doubly stochastic and with
eigenvalues $1=\lambda_{1}\geq\lambda_{2}\geq\ldots\geq-1$, have
$\lambda_{2}\geq\frac{2\sqrt{d-1}}{d}-o(1)$. Feng and Li \cite{Li92}
showed that undirectedness is essential to this bound: they provide
a construction of cyclically-directed $r$-partite ($r\geq2$) $d$-regular
digraphs (with $n=kr$ vertices for $k>d$, $\gcd(k,d)=1$), whose
normalized adjacency matrices have (apart from $r$ ``trivial''
eigenvalues), only eigenvalues of norm $\leq1/d$. The construction
is of an affine-linear nature quite similar to the preceding two,
and to our knowledge does not give an upper bound on $\Gamma$ any
stronger than those.

\subsection{A new construction}

To achieve the upper bound in Theorem \ref{thm:main}, we need a construction
that is \emph{exponentially} better than known examples. We give an
explicit construction of $n\times n$ doubly stochastic matrices $A_{n}$
(for every $n$) that are highly \emph{nonexpanding}, since they contain
sets with edge expansion less than $1/\sqrt{n}$, even though \emph{every
}nontrivial eigenvalue is 0. The construction might seem nonintuitive,
but in Appendix \ref{sec:Intuition-behind-the-construction} we give
some explanation of how to arrive at it. 
\begin{thm}
\label{thm:constr_main}Let $m=\sqrt{n}$,

\begin{align*}
a_{n} & =\dfrac{m^{2}+m-1}{m\cdot(m+2)}, & b_{n} & =\dfrac{m+1}{m\cdot(m+2)}, & c_{n} & =\dfrac{1}{m\cdot(m+1)},\\
\\
d_{n} & =\dfrac{m^{3}+2m^{2}+m+1}{m\cdot(m+1)\cdot(m+2)}, & e_{n} & =\dfrac{1}{m\cdot(m+1)\cdot(m+2)}, & f_{n} & =\dfrac{2m+3}{m\cdot(m+1)\cdot(m+2)},
\end{align*}

\noindent and define the $n\times n$ matrix

\[
A_{n}=\left[\begin{array}{ccccccccc}
a_{n} & b_{n} & 0 & 0 & 0 & 0 & 0 & \cdots & 0\\
0 & c_{n} & d_{n} & e_{n} & e_{n} & e_{n} & e_{n} & \cdots & e_{n}\\
0 & c_{n} & e_{n} & d_{n} & e_{n} & e_{n} & e_{n} & \cdots & e_{n}\\
0 & c_{n} & e_{n} & e_{n} & d_{n} & e_{n} & e_{n} & \cdots & e_{n}\\
0 & c_{n} & e_{n} & e_{n} & e_{n} & d_{n} & e_{n} & \cdots & e_{n}\\
0 & c_{n} & e_{n} & e_{n} & e_{n} & e_{n} & d_{n} & \cdots & e_{n}\\
\vdots & \vdots & \vdots & \vdots & \vdots & \vdots & \vdots & \ddots & \vdots\\
0 & c_{n} & e_{n} & e_{n} & e_{n} & e_{n} & e_{n} & \ldots & d_{n}\\
b_{n} & f_{n} & c_{n} & c_{n} & c_{n} & c_{n} & c_{n} & \cdots & c_{n}
\end{array}\right].
\]
Then the following hold for $A_{n}$: 
\begin{enumerate}
\item $A_{n}$ is doubly stochastic. 
\item Every nontrivial eigenvalue of $A_{n}$ is 0. 
\item The edge expansion is bounded as 
\[
\dfrac{1}{6\sqrt{n}}\leq\phi(A_{n})\leq\dfrac{1}{\sqrt{n}}.
\]
\item As a consequence of 1,2,3, 
\[
\phi(A_{n})\leq\dfrac{1-\re\lambda_{2}(A_{n})}{\sqrt{n}}
\]
and thus 
\[
\Gamma(n)\leq\dfrac{1}{\sqrt{n}}.
\]
\end{enumerate}
\end{thm}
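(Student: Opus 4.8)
The plan is to verify the four claims about $A_n$ in order, since claims 1--3 are the substance and claim 4 is an immediate corollary. First I would check double stochasticity: one must confirm that every row of $A_n$ sums to $1$ and every column sums to $1$. There are essentially four row-types (the first row with $a_n,b_n$; the ``bulk'' rows with one $c_n$, one $d_n$, and $m^2-2$ copies of $e_n$; the last row with $b_n,f_n$ and $m^2-1$ copies of $c_n$) and correspondingly four column-types, and one plugs in the explicit values of $a_n,\dots,f_n$ (with $m=\sqrt n$) and simplifies. This is a routine but slightly tedious algebraic verification; I would organize it as a short lemma listing the relevant sums.

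Next, for claim 2 I would show that the characteristic polynomial of $A_n$ is $x^{n-1}(x-1)$, i.e. that the only nonzero eigenvalue is the trivial one. The cleanest route is to exhibit the structure directly: the bulk $(m^2)\times(m^2)$ block has the form $(d_n-e_n)I + e_n\mathbf{1}\mathbf{1}^T$ plus rank-one corrections from the $c_n$ column, so the whole matrix $A_n$ is a low-rank perturbation of a scalar multiple of the identity — in fact one expects $A_n$ itself (not just $A_n - \text{something}$) to have rank so low, or more precisely that $A_n - 0\cdot I$ has all eigenvalues $0$ except one. I would compute $\operatorname{rank}(A_n)$ or, better, directly produce a basis of the kernel of $A_n$ of dimension $n-1$: vectors supported on the ``bulk'' coordinates of the form $e_i - e_j$ get mapped by the bulk block $(d_n-e_n)I$ to $(d_n-e_n)(e_i-e_j)$, so that is not the kernel; instead I suspect the right statement is that $A_n$ restricted appropriately is nilpotent-plus-rank-one, so I would instead compute the characteristic polynomial by a cofactor/Schur-complement expansion exploiting the block structure, reducing an $n\times n$ determinant to a $2\times 2$ or $3\times 3$ one. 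Alternatively, note that all nontrivial eigenvalues being $0$ is equivalent to $A_n - J'$ being nilpotent where $J'$ is the rank-one projection onto the $\mathbf 1$ eigenspace (here $J = \frac1n \mathbf 1\mathbf1^T$), so I would show $(A_n - J)^k = 0$ for some small $k$; given the de Bruijn-style ``forgetting'' intuition mentioned in the paper, I expect $k$ to be a small constant and this to be checkable from the block form.

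For claim 3, the upper bound $\phi(A_n)\le 1/\sqrt n$ I would get by exhibiting an explicit small cut: take $S$ to be the first coordinate alone, or a small prefix/suffix of coordinates, compute $\sum_{i\in S, j\notin S}A_{i,j}$ from the explicit entries, and divide by $|S|$; by the design of the matrix the ``leak'' out of a well-chosen $S$ of size $\approx\sqrt n$ should be $O(1/n)\cdot\sqrt n = O(1/\sqrt n)$, and I would track constants to land at exactly $1/\sqrt n$ (likely $S = \{2,3,\dots,m+1\}$ or similar, whose outflow is governed by the $b_n, e_n, c_n$ entries). The lower bound $\phi(A_n)\ge \frac{1}{6\sqrt n}$ is the one genuinely requiring work: for \emph{every} cut $S$ with $|S|\le n/2$ one must lower bound $\frac{1}{|S|}\sum_{i\in S,j\notin S}A_{i,j}$. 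I would split into cases according to whether the special vertices (vertex $1$ and vertex $n$, the ``last'' row/column) lie in $S$ or $\overline S$, and whether $S$ contains many or few bulk vertices; in the bulk-heavy case the $d_n$ vs.\ $e_n$ entries force a large outflow unless $S$ is almost all of the bulk, and the boundary vertices $1,n$ then contribute the needed amount. This case analysis, keeping the constant at $6$, is the main obstacle — everything else is bookkeeping with the explicit rational entries.

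Finally, claim 4 is immediate: by claim 2, $\re\lambda_2(A_n) = 0$, so $1 - \re\lambda_2(A_n) = 1$, and then claim 3 gives $\phi(A_n) \le 1/\sqrt n = (1-\re\lambda_2(A_n))/\sqrt n$; taking the minimum over all $n\times n$ doubly stochastic matrices in Definition~\ref{def:gamma} yields $\Gamma(n)\le \phi(A_n)/(1-\re\lambda_2(A_n)) \le 1/\sqrt n$. I would state this last step in one line. The overall structure of the writeup would thus be: a double-stochasticity lemma, an eigenvalue lemma (via nilpotence of $A_n - J$ or a determinant computation), an upper-bound-on-$\phi$ paragraph with the explicit cut, a lower-bound-on-$\phi$ proposition with the case analysis, and a one-line deduction of the $\Gamma$ bound.
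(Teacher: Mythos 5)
Your plan for parts (1) and (4) matches the paper and is fine, and the upper bound $\phi(A_n)\le b_n < 1/\sqrt n$ via $S=\{1\}$ is exactly what the paper does. The two places where you diverge are where the real work is, and both contain problems.

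For part (2), you float two routes: a determinant computation, or showing $(A_n-J)^k=0$ for a \emph{small constant} $k$ ``given the de Bruijn-style forgetting intuition.'' The second expectation is simply false for this construction. The paper exhibits an explicit Schur form $A_n = U_n T_n U_n^*$ where $T_n$ has a $1$ in position $(1,1)$, the value $r_n = 1 - \tfrac{1}{\sqrt n + 2}$ on the superdiagonal from $(2,3)$ through $(n-1,n)$, and zeros elsewhere. The nontrivial block of $T_n$ is a single Jordan-type chain of length $n-2$, so $A_n - J$ is nilpotent of index $\Theta(n)$, not a constant and not even $O(\log n)$. This is by design: the whole point of the construction (as discussed in the paper's intuition appendix) is to make Lemma~\ref{lem:T_norm_1_bound} tight, which requires the nilpotency index to be $\Theta(n)$. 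If you try to verify $(A_n-J)^k=0$ for small $k$ you will not succeed. The determinant route could in principle work, but it forgoes the structure you actually need for part (3).

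The larger gap is the lower bound $\phi(A_n)\ge \tfrac{1}{6\sqrt n}$. You propose a direct case analysis over all cuts $S$ according to how they intersect vertex $1$, vertex $n$, and the bulk. You yourself call this ``the main obstacle,'' and rightly so: lower-bounding $\phi$ over all $2^{n-1}$ cuts by hand is the kind of thing the rest of this paper is designed to avoid, and it is not at all clear the case analysis closes cleanly with the constant $6$. The paper's argument is completely different and much shorter: since $T_n$ has zeros on its diagonal except for the single $1$, it can be written as $T_n = \Pi_n D_n$ with $\Pi_n$ a permutation and $D_n = \mathrm{diag}(1, r_n, \ldots, r_n, 0)$. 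Hence $A_n = (U_n\Pi_n) D_n U_n^*$ is a singular value decomposition, giving $\sigma_2(A_n) = r_n$ exactly. Lemma~\ref{lem:sigma-bound-phi} then yields
\[
\phi(A_n) \;\ge\; \frac{1-\sigma_2(A_n)}{2} \;=\; \frac{1}{2(\sqrt n + 2)} \;\ge\; \frac{1}{6\sqrt n},
\]
with no case analysis at all. The key idea you are missing is that the same explicit Schur decomposition that proves part (2) also hands you the singular values for free (because $T_n$ is a permuted diagonal matrix), and that the paper has already built the bridge from $\sigma_2$ to $\phi$ in Lemma~\ref{lem:sigma-bound-phi}. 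Without that observation, part (3) lower bound is a serious open hole in your write-up.
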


\begin{proof}
The proof is given in Appendix \ref{sec:Proof-of-construction}. 
\end{proof}
This completes the proof of the upper bound in Theorem \ref{thm:main}.
We remark that for the matrices $A_{n}$ constructed in Theorem \ref{thm:constr_main},
it holds that 
\[
\phi(A_{n})\leq\frac{1-|\lambda_{i}(A)|}{\sqrt{n}}
\]
for any $i\not=1$, giving a stronger guarantee than that required
for Theorem \ref{thm:main}.

We also remark that it would be unlikely to arrive at such a construction
by algorithmic simulation, since the eigenvalues of the matrices $A_{n}$
are extremely sensitive. Although $\lambda_{2}(A_{n})=0$, if we shift
only $O(1/\sqrt{n})$ of the mass in the matrix $A_{n}$ to create
a matrix $A'_{n}$, by replacing $a_{n}$ with $a'_{n}=a_{n}+b_{n}$,
$b_{n}$ with $b'_{n}=0$, $f_{n}$ with $f'_{n}=f_{n}+b_{n}$ and
keeping $c_{n},d_{n},e_{n}$ the same, then $\lambda_{2}(A'_{n})=1$.
Thus, since perturbations of $O(1/\sqrt{n})$ (which is tiny for large
$n$) cause the second eigenvalue to jump from 0 to 1 (and the spectral
gap from 1 to 0), it would not be possible to make tiny changes to
random matrices to arrive at a construction satisfying the required
properties in Theorem \ref{thm:constr_main}.

\section{\label{sec:lower-bound-on-phi-gen}Lower bound on the edge expansion
$\phi$ in terms of the spectral gap}

In this section, we prove the lower bound on $\phi$ in Theorem \ref{thm:gen-phi-lambda2},
and the lower bound on $\phi$ in Theorem \ref{thm:main} will follow
as a special case. The proof is a result of a sequence of lemmas that
we state next. The first lemma states that $\phi$ is sub-multiplicative
in the following sense. 
\begin{lem}
\label{lem:pow_bound} Let $R\in\mathbb{R}^{n\times n}$ be a nonnegative
matrix with left and right eigenvectors $u$ and $v$ for the PF eigenvalue
1. Then 
\[
\phi(R^{k})\leq k\cdot\phi(R).
\]
\end{lem}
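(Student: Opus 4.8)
The plan is to show that a single good cut for $R^k$ can be ``unfolded'' into $k$ cuts for $R$, at least one of which is not much worse than $\phi(R^k)/k$. First I would set up the Eulerian normalization: let $u,v$ be positive left/right eigenvectors of $R$ for eigenvalue $1$, normalized so $\langle u,v\rangle = 1$ (if no positive pair exists then $\phi(R)=0$ and, since $R^k$ also has no positive pair, $\phi(R^k)=0$, and the inequality is trivial). Note that $u,v$ are also left/right eigenvectors of $R^k$ for eigenvalue $1$, so the same normalization defines $\phi(R^k)$. Write $P = D_u R D_v$, which is Eulerian (this is exactly the computation $D_u R D_v \mathbf{1} = D_v R^T D_u \mathbf{1}$ from the preliminaries), and observe $D_u R^k D_v = P \cdot (D_v^{-1} D_u^{-1})^{k-1}\cdots$ — more carefully, $D_u R^k D_v = P D^{-1} P D^{-1} \cdots P$ with $D = D_u D_v$, so that $\phi(R^k)$ is measured by the Eulerian matrix $Q := D_u R^k D_v$ against the same vertex weights $w_i := u_i v_i$.

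The main step is a telescoping/union-bound argument on boundaries. Let $S$ achieve $\phi(R^k)$, with $\sum_{i\in S} w_i \le \tfrac12$. The quantity $\langle \mathbf{1}_S, Q \mathbf{1}_{\overline S}\rangle$ counts $w$-weighted mass of length-$k$ walks in $R$ (weighted by $u$ at the start and $v$ at the end) that begin in $S$ and end in $\overline S$. Any such walk must cross the cut $S$ at some step $t \in \{1,\dots,k\}$, i.e. there is a first index where the walk passes from $S$ to $\overline S$ along an edge of $R$. Distributing the mass of each crossing walk to the step at which it first crosses, and using that the intermediate distribution at any time is governed by powers of $P D^{-1}$ which preserve the stationary vector $w$ (so the total $w$-mass in $S$ at each intermediate time is at most... actually we only need an upper bound of $\sum_i w_i = 1$ on total mass), one gets
\[
\phi(R^k)\cdot \min\{w(S), w(\overline S)\} \;=\; \langle \mathbf{1}_S, Q \mathbf{1}_{\overline S}\rangle \;\le\; \sum_{t=1}^{k} \langle \mathbf{1}_S, P D^{-1} \cdots D^{-1} P \mathbf{1}_{\overline S}\rangle_{\text{(single crossing at step }t)}.
\]
Each term on the right is bounded by the one-step boundary $\langle \mathbf{1}_S, P \mathbf{1}_{\overline S}\rangle$ times a factor accounting for mass in/out of $S$ at the other $k-1$ steps, which is at most $1$ because $P D^{-1}$ is a nonnegative matrix with right eigenvector $\mathbf{1}$ (as $PD^{-1}\mathbf{1} = D_u R v = D_u v = w = D\mathbf{1}$, wait — $PD^{-1}\mathbf{1} = D_u R D_v D_v^{-1} D_u^{-1}\mathbf{1}$; the clean statement is that $R$ itself has $Rv = v$, so mass is conserved), giving $\langle \mathbf{1}_S, Q \mathbf{1}_{\overline S}\rangle \le k \cdot \langle \mathbf{1}_S, P \mathbf{1}_{\overline S}\rangle$. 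Since $\langle \mathbf{1}_S, P \mathbf{1}_{\overline S}\rangle = \phi_S(R)\cdot \min\{w(S),w(\overline S)\} \ge \phi(R)\cdot\min\{w(S),w(\overline S)\}$... actually the inequality I want is $\langle \mathbf{1}_S, P\mathbf{1}_{\overline S}\rangle \le \phi(R)\cdot$ nothing — rather, since this $S$ may not be optimal for $R$, I use $\phi_S(R) \ge \phi(R)$ in the wrong direction; the correct conclusion is $\phi(R^k)\cdot c \le k\cdot \phi_S(R)\cdot c$ where $c = \min\{w(S),w(\overline S)\}$, hence $\phi(R^k) \le k\,\phi_S(R)$, and since we want an upper bound on $\phi(R^k)$ in terms of $\phi(R)$ we instead pick $S$ to be the \emph{optimal cut for $R$}, run the same walk-counting to lower bound nothing...

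Let me restate cleanly: pick $S^\ast$ achieving $\phi(R)$. Then the length-$k$ boundary mass from $S^\ast$ to $\overline{S^\ast}$ in $Q$ is at most $k$ times the one-step boundary mass (every crossing walk crosses at least once; charge it to its first crossing; the residual factors are $\le 1$ by mass conservation under $R$). That is, $\langle \mathbf{1}_{S^\ast}, Q\mathbf{1}_{\overline{S^\ast}}\rangle \le k\,\langle \mathbf{1}_{S^\ast}, P\mathbf{1}_{\overline{S^\ast}}\rangle$. Dividing both sides by $\min\{w(S^\ast),w(\overline{S^\ast})\}$ gives $\phi_{S^\ast}(R^k) \le k\,\phi_{S^\ast}(R) = k\,\phi(R)$, and $\phi(R^k) \le \phi_{S^\ast}(R^k) \le k\,\phi(R)$, as desired.

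The hard part will be making the ``residual factors are $\le 1$'' step fully rigorous when $R$ is merely nonnegative (not stochastic): one must verify that when the crossing-walk mass is re-expressed with the $u,v$ weights, the portions of the walk before and after the designated crossing edge contribute total weight bounded by the full $w$-mass, which follows from $Rv=v$ and $u^TR = u^T$ applied to the sub-walks — i.e. summing $u_i v_j$ over all directed length-$\ell$ walks $i\leadsto j$ with both endpoints unconstrained equals $\langle u, R^\ell v\rangle = \langle u,v\rangle = 1$, and with endpoints constrained to lie in $S^\ast$ or $\overline{S^\ast}$ it is at most $1$. Keeping the bookkeeping of ``first crossing'' disjoint across $t=1,\dots,k$ (so the union bound is a genuine sum, not an overcount) is the one place to be careful; everything else is the mass-conservation identity $\langle u, R^\ell v\rangle = 1$ applied repeatedly.
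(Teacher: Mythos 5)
Your argument is correct, and it takes a genuinely different route from the paper's. The paper proves a more general subadditivity fact, $\gamma_S(RB)\leq\gamma_S(R)+\gamma_S(B)$ for any two nonnegative matrices $R,B$ sharing the eigenvector pair $(u,v)$, by decomposing both matrices into $2\times 2$ block form with respect to the cut $S$ and algebraically rewriting the cross-boundary mass using the four eigenvector block identities; it then inducts to get $\gamma_S(R^k)\leq k\cdot\gamma_S(R)$. You instead give a first-passage decomposition: each length-$k$ walk from $S^*$ to $\overline{S^*}$ is charged to the first step at which it crosses the cut, and the sums over the uncharged portions of the walk are bounded by unconstrained walk sums, which collapse to $((R^T)^{t-1}u)_{i_{t-1}}=u_{i_{t-1}}$ and $(R^{k-t}v)_{i_t}=v_{i_t}$ by the eigenvector identities. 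The charging is disjoint across $t$, so summing gives $\langle\mathbf{1}_{S^*},D_uR^kD_v\mathbf{1}_{\overline{S^*}}\rangle\leq k\cdot\langle\mathbf{1}_{S^*},D_uRD_v\mathbf{1}_{\overline{S^*}}\rangle$; since $D_uR^kD_v\mathbf{1}=D_uD_v\mathbf{1}$ the denominators agree, and $\phi(R^k)\leq\phi_{S^*}(R^k)\leq k\,\phi_{S^*}(R)=k\,\phi(R)$.

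The combinatorial route is cleaner and shorter than the paper's block-matrix manipulation, and it localizes the role of nonnegativity (dropping constraints on uncharged steps only increases a sum of nonnegative terms). What it does not give directly is the paper's product form $\gamma_S(RB)\leq\gamma_S(R)+\gamma_S(B)$, though that too would follow from a two-step version of your argument. Two small clean-ups if you write this out: the phrase ``residual factors are $\leq1$'' is misleading — the unconstrained sub-walk sums equal (not merely bound) the eigenvector entries $u_{i_{t-1}}$ and $v_{i_t}$, and these then reconstitute exactly the one-step Eulerian weight $u_{i_{t-1}}R_{i_{t-1}i_t}v_{i_t}$. And your degenerate-case remark that $R^k$ has no positive eigenvector pair whenever $R$ does not, while true, deserves a one-line justification (e.g. if $(u',v')$ is a positive pair for $R^k$, then $\sum_{j<k}(R^T)^ju'$ and $\sum_{j<k}R^jv'$ are a positive pair for $R$).
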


\begin{proof}
The proof is given in Appendix \ref{sec:Proof-of-pow-bound}. 
\end{proof}
For the case of symmetric doubly stochastic matrices $R$, Lemma \ref{lem:pow_bound}
follows from a theorem of Blakley and Roy \cite{BlakleyR65}. (It
does not fall into the framework of an extension of that result to
the nonsymmetric case \cite{Pate12}). Lemma \ref{lem:pow_bound}
helps to lower bound $\phi(R)$ by taking powers of $R$, which is
useful since we can take sufficient powers in order to make the matrix
simple enough that its edge expansion is easily calculated. The next
two lemmas follow by technical calculations. 
\begin{lem}
\label{lem:T_norm_bound} Let $T\in\mathbb{C}^{n\times n}$ be an
upper triangular matrix with $\|T\|{}_{2}=\sigma$ and for every $i$,
$|T_{i,i}|\leq\beta$. Then 
\[
\|T^{k}\|_{2}\leq n\cdot\sigma^{n}\cdot{k+n \choose n}\cdot\beta^{k-n}.
\]
\end{lem}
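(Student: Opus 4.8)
The plan is to write $T = D + N$ where $D$ is the diagonal part (the eigenvalues, each of magnitude at most $\beta$) and $N$ is the strictly upper triangular part, so that $N$ is nilpotent with $N^n = 0$. Since $T$ is upper triangular, $D$ and $N$ need not commute, so I cannot simply binomial-expand $(D+N)^k$ into a clean product form; instead I would expand $T^k = (D+N)^k$ as a sum over all words $w \in \{D,N\}^k$ of length $k$. Each such word contributes a product of $k$ factors, and by submultiplicativity of the operator norm, the norm of each word is at most $\|D\|_2^{\,(\#D\text{'s})} \cdot \|N\|_2^{\,(\#N\text{'s})} \le \beta^{\,k-j} \sigma^{\,j}$ if the word has $j$ copies of $N$ (using $\|D\|_2 \le \beta$ and $\|N\|_2 \le \|T\|_2 = \sigma$). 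Crucially, any word containing $n$ or more copies of $N$ need not be bounded this crudely — but better, I will argue such words vanish or are negligible: actually the cleanest route is to note $N^n=0$, but since the $D$'s are interleaved, a word need not contain $N^n$ as a consecutive block. So the honest approach is: words with at least $n$ copies of $N$ still contribute, but we bound their count and use $\sigma^j \le \sigma^n \cdot (\text{stuff})$ — here I would instead split on $j$, the number of $N$-factors, from $0$ to $k$, bound $\|N\|_2^j \le \sigma^j$, bound $\|D\|_2^{k-j}\le\beta^{k-j}$, and count the number of words with exactly $j$ copies of $N$, which is $\binom{k}{j}$.

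This gives $\|T^k\|_2 \le \sum_{j=0}^{k} \binom{k}{j} \sigma^j \beta^{k-j}$, which is just $(\sigma+\beta)^k$ — too weak, and it fails to use nilpotency at all. So the real plan must exploit $N^n = 0$: in the word expansion, a word $w$ with factors $N$ at positions $i_1 < i_2 < \cdots < i_j$ (and $D$ elsewhere) equals a product which, after absorbing, is a product of blocks $D^{a_0} N D^{a_1} N \cdots N D^{a_j}$; such a product is a sum of terms each of which, when we track how $N$'s compose, is governed by $N^{(\text{consecutive run length})}$. The slick way: since $N$ is strictly upper triangular $n\times n$, $N^n = 0$, so in $T^k = (D+N)^k$ every surviving monomial uses $N$ at most $n-1$ times \emph{in total}? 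No — that is false when $D$'s separate them. The correct statement is that the $(p,q)$ entry of any product of $D$'s and $N$'s is nonzero only if the number of $N$-factors is at least... no. I will instead use the standard bound: for $T$ upper triangular, $\|T^k\|_2 \le \sum_{j=0}^{n-1}\binom{k}{j}\|N\|_2^{\,j}\,\|D\|_2^{\,k-j}$ \emph{is not} generally true either.

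Let me commit to the approach that actually yields the stated bound. Write $T^k = \sum_{w} \prod w$ over words; group words by $j = \#N$-factors. A word with $j$ copies of $N$, viewed as $D^{a_0}ND^{a_1}N\cdots ND^{a_j}$ with $\sum a_i = k-j$, has norm $\le \sigma^j \beta^{k-j}$ \textbf{only} when $j \le n-1$; when $j \ge n$, I claim the word is still bounded by $\sigma^j\beta^{k-j}$ trivially (each factor has norm $\le\max(\sigma,\beta)\le\sigma$ assuming WLOG $\beta\le\sigma$, plus $\beta\le\|T\|_2=\sigma$ always). So $\|T^k\|_2 \le \sum_{j=0}^k \binom kj \sigma^j\beta^{k-j}$. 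To get the claimed $n\sigma^n\binom{k+n}{n}\beta^{k-n}$, I would bound the tail $j\ge n$ by $\sigma^n\beta^{k-n}\sum_{j\ge n}\binom kj (\sigma/\beta)^{\,j-n}\cdot\beta^{\,0}$... this still isn't it. The honest key step, which I expect to be \textbf{the main obstacle}, is: every monomial in $(D+N)^k$ that uses $N$ at least $n$ times is \emph{zero}, because consecutive $D$-blocks commute past... no. The genuinely correct fact is that $N$ strictly upper triangular forces: in entry $(p,q)$, a monomial survives only if it contains at least $q-p$ copies of $N$ and $q - p \le n-1$, but a monomial can contain up to $k$ copies. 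I will therefore follow the paper's intended route: bound $\|T^k\|_2$ by the sum over $j=0,\dots,\min(k,n-1)$? — giving $\sum_{j=0}^{n-1}\binom kj\sigma^j\beta^{k-j} \le \sigma^n\sum_{j=0}^{n-1}\binom kj \beta^{k-j}\cdot 1 \le \ldots$, and then crudely bound $\binom kj \le \binom{k+n}{n}$ and the number of terms by $n$, and $\beta^{k-j}\le\beta^{k-n}\cdot\beta^{n-j}\le\beta^{k-n}$ (if $\beta\le 1$; otherwise fold factors of $\beta^{n-j}\le\sigma^{n-j}$ into the $\sigma^n$). The main obstacle is precisely establishing that only $j \le n-1$ copies of $N$ contribute, i.e., proving the word-length bound from nilpotency of the \emph{non-consecutive} occurrences — I would resolve it by the entrywise argument: $T^k_{p,q}$ is a sum of path-products over increasing index sequences $p = i_0 \le i_1 \le \cdots \le i_k = q$ (weakly increasing since $T$ is upper triangular), and a strict increase happens at most $q - p \le n-1$ times, each strict increase "costs" one off-diagonal factor while weakly-equal steps cost diagonal factors; grouping and bounding along these lines delivers exactly $\|T^k\|_2 \le n\,\sigma^n\binom{k+n}{n}\beta^{k-n}$ after the routine combinatorial estimates.
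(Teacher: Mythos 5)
Your final approach --- expanding $T^{k}_{p,q}$ as a sum over weakly increasing index paths $p=i_{0}\le i_{1}\le\cdots\le i_{k}=q$, bounding each diagonal step by $\beta$ and each strict increase by $\sigma$, and noting that a path has at most $q-p\le n-1$ strict increases --- is the same kernel idea as the paper's, but executed by direct path enumeration rather than by induction. The paper instead defines $g_{r}(k)=\max_{i}|T^{k}(i,i+r)|$, derives the recurrence $g_{r}(k)\le\sum_{i=0}^{r}g_{r-i}(1)\,g_{i}(k-1)$, and proves $g_{r}(k)\le\binom{k+r}{r}\beta^{k-r}\sigma^{r}$ by a two-variable induction; your route gets the (slightly tighter) per-entry bound $\binom{k+r-1}{r}\sigma^{r}\beta^{k-r}$ in one step by stars-and-bars. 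Two things you wave at as ``routine'' do need to be made explicit: first, the bound $\sigma^{j}\beta^{k-j}\le\sigma^{n}\beta^{k-n}$ for $j\le n-1$ uses $\sigma\ge 1$ and $\beta\le 1$, assumptions the paper also makes silently (they hold in the only place the lemma is applied, with $\sigma=\|T\|_{2}=1$ and $\beta=|\lambda_{m}|<1$) but which the lemma statement itself does not impose; second, the passage from a per-entry bound to $\|T^{k}\|_{2}$ is via the Frobenius norm, $\|T^{k}\|_{2}\le\|T^{k}\|_{F}\le n\cdot\max_{p,q}|T^{k}_{p,q}|$, which is where the leading factor of $n$ comes from. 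The first two-thirds of your writeup, chasing the $T=D+N$ word expansion, is a dead end you correctly abandon: as you note, $N$-factors separated by $D$-factors need not vanish, so nilpotency cannot be exploited at the level of words; only the entrywise (path) formulation sees the constraint $q-p\le n-1$.
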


\begin{proof}
The proof is given in Appendix \ref{sec:Proof-of-T-norm-bound}. 
\end{proof}
Using Lemma \ref{lem:T_norm_bound}, we can show the following lemma
for the special case of upper triangular matrices with operator norm
at most 1. 
\begin{lem}
\label{lem:T_norm_1_bound} Let $T\in\mathbb{C}^{n\times n}$ be an
upper triangular matrix with $\|T\|_{2}\leq1$ and $|T_{i,i}|\leq\alpha<1$
for every $i$. Then $\|T^{k}\|\leq\epsilon$ for 
\[
k\geq\dfrac{4n+2\ln(\frac{n}{\epsilon})}{1-\alpha}.
\]
\end{lem}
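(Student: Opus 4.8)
The plan is to invoke Lemma \ref{lem:T_norm_bound} with $\sigma = 1$ and $\beta = \alpha$, which gives $\|T^k\|_2 \leq n \cdot \binom{k+n}{n} \cdot \alpha^{k-n}$, and then to choose $k$ large enough that the right-hand side drops below $\epsilon$. So the task reduces to a purely scalar estimate: find a clean sufficient condition on $k$ ensuring $n \cdot \binom{k+n}{n} \cdot \alpha^{k-n} \leq \epsilon$. First I would take logarithms, so the target becomes $\ln n + \ln\binom{k+n}{n} + (k-n)\ln\alpha \leq \ln\epsilon$, i.e. $(k-n)\ln\frac{1}{\alpha} \geq \ln\frac{n}{\epsilon} + \ln\binom{k+n}{n}$. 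The only term that grows with $k$ on the right is $\ln\binom{k+n}{n}$, and this grows only logarithmically in $k$, so a crude bound will suffice.

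The key step is to bound $\binom{k+n}{n}$. Using $\binom{k+n}{n} \leq \frac{(k+n)^n}{n!} \leq (k+n)^n$, or even the looser $\binom{k+n}{n} \leq 2^{k+n}$, lets me linearize: $\ln\binom{k+n}{n} \leq n\ln(k+n)$. Then it suffices to have $(k-n)\ln\frac{1}{\alpha} \geq \ln\frac{n}{\epsilon} + n\ln(k+n)$. Since $\ln\frac{1}{\alpha}$ could be arbitrarily small as $\alpha \to 1$, I cannot cancel it; it stays as the $\frac{1}{1-\alpha}$ in the denominator of the claimed bound, using the elementary inequality $\ln\frac{1}{\alpha} \geq 1-\alpha$ for $\alpha \in (0,1)$. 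Thus it suffices that $(k-n)(1-\alpha) \geq \ln\frac{n}{\epsilon} + n\ln(k+n)$, and then I need to absorb the $n\ln(k+n)$ term. The cleanest route: show that for $k \geq \frac{4n + 2\ln(n/\epsilon)}{1-\alpha}$ we have $k - n \geq \frac{1}{1-\alpha}\bigl(3n + 2\ln(n/\epsilon)\bigr)$ (since $n \leq \frac{n}{1-\alpha}$ can be subtracted off), and separately bound $n\ln(k+n)$ against, say, half of $(k-n)(1-\alpha)$ — using that $\ln(k+n) \leq \sqrt{k+n}$ and that $k+n$ is at least a large multiple of $n$, or more simply by noting $\ln x \leq x/e \leq x$ and peeling off a factor. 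A self-contained way is: it is enough that $(k-n)(1-\alpha) \geq 2\ln(n/\epsilon)$ and $(k-n)(1-\alpha) \geq 2n\ln(k+n)$; the first holds since $(k-n)(1-\alpha) \geq 3n + 2\ln(n/\epsilon) - n(1-\alpha) \geq 2\ln(n/\epsilon)$, and the second reduces to $k - n \geq \frac{2n}{1-\alpha}\ln(k+n)$, which one verifies holds for $k \geq \frac{4n}{1-\alpha}$ by a standard $x \geq c\ln x$ argument (e.g., $x \geq 2c\ln(2c)$ suffices, and here $x = k+n$, $c = O(n/(1-\alpha))$, so one checks the constant $4$ is generous enough, possibly after mild adjustment).

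I expect the main obstacle to be purely bookkeeping: making the constants $4$ and $2$ in the statement come out exactly, rather than something slightly larger, from the chain of crude inequalities above. The risk is that bounding $n\ln(k+n)$ too wastefully forces a worse constant; the fix is to use that $k+n$ is already a large multiple of $n$ (from the lower bound on $k$), so $\ln(k+n) = \ln n + \ln\bigl((k+n)/n\bigr)$, split the $\ln n$ part into the $\ln(n/\epsilon)$ budget and handle the $\ln\bigl((k+n)/n\bigr)$ part with $\ln t \leq t$ applied after dividing through by something, keeping enough slack. None of this is deep; it is a matter of choosing the intermediate thresholds so the final bound collapses to exactly $\frac{4n + 2\ln(n/\epsilon)}{1-\alpha}$. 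As a sanity check I would verify the two extreme regimes: when $\epsilon$ is tiny the $2\ln(n/\epsilon)$ term dominates and the geometric decay $\alpha^{k-n}$ clearly carries it; when $n$ is large the $4n$ term dominates and is needed precisely to kill the binomial factor $\binom{k+n}{n}$, which is where the constant $4$ (as opposed to, say, $2$) earns its keep.
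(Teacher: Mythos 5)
Your plan---apply Lemma~\ref{lem:T_norm_bound} directly with $\sigma=1$ and $\beta=\alpha$, then clean up---has a genuine gap that no choice of constants can repair. The obstruction is that the binomial factor $\binom{k+n}{n}$ grows with $k$, while the compensating decay $\alpha^{k-n}$ is useful only insofar as $(k-n)(1-\alpha)$ is large; and with $k$ of the required order $\Theta\!\left(\frac{n}{1-\alpha}\right)$ the decay saturates at $\alpha^{k-n}\le e^{-(k-n)(1-\alpha)}\approx e^{-O(n)}$, a bound that does \emph{not} improve as $\alpha\to 1$, whereas $\binom{k+n}{n}\approx\left(\frac{e(k+n)}{n}\right)^n\approx\left(\frac{O(1)}{1-\alpha}\right)^n$ blows up. Concretely, your sub-claim ``$k-n\ge\frac{2n}{1-\alpha}\ln(k+n)$ holds for $k\ge\frac{4n}{1-\alpha}$'' is false: taking $k=\frac{4n}{1-\alpha}$ the left side is $\approx\frac{4n}{1-\alpha}$, the right side is $\approx\frac{2n}{1-\alpha}\ln\frac{4n}{1-\alpha}$, and the ratio tends to $0$ as $\alpha\to 1$. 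The standard $x\ge c\ln x$ criterion requires $x\gtrsim c\ln c$, which here would force $k$ to be larger by a factor of roughly $\ln\frac{1}{1-\alpha}$, and that extra factor is unbounded, so this is not a ``mild adjustment'' in the constants.

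The paper avoids this by a two-stage powering that decouples the $\alpha$-dependence from the binomial growth. First one sets $X=T^{k_1}$ with $k_1=\frac{c_1}{1-\alpha}$; then $\|X\|_2\le 1$ and every diagonal entry of $X$ satisfies $|X_{i,i}|\le\alpha^{k_1}\le e^{-c_1}$, a \emph{constant} bound independent of $\alpha$. Only then is Lemma~\ref{lem:T_norm_bound} invoked, applied to $X$ with $\sigma=1$, $\beta=e^{-c_1}$, and $k_2=c_2 n$---crucially a power that does \emph{not} grow as $\alpha\to 1$. This caps the binomial at $\binom{(c_2+1)n}{n}\le (e(c_2+1))^n$, which is $e^{O(n)}$ with a fixed base, while the decay $e^{-c_1(c_2-1)n}$ is also $e^{-\Theta(n)}$; the two exponentials can then be compared by choosing $c_1,c_2$ appropriately (the paper takes $c_2=3.51$ and $c_1=1+\frac{1}{2.51}\cdot\frac{1}{n}\ln\frac{n}{\epsilon}$). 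The total exponent is $k=k_1 k_2=\frac{c_1 c_2 n}{1-\alpha}=\frac{3.51 n+1.385\ln(n/\epsilon)}{1-\alpha}$, which is within the stated budget $\frac{4n+2\ln(n/\epsilon)}{1-\alpha}$. The moral is that the diagonal must be shrunk to a constant \emph{before} the combinatorial lemma is applied, so that the number of outer iterations needed is $O(n)$ rather than $O\!\left(\frac{n}{1-\alpha}\right)$.
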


\begin{proof}
The proof is given in Appendix \ref{sec:Proof-of-T-norm-1-bound}. 
\end{proof}
Given lemmas \ref{lem:pow_bound} and \ref{lem:T_norm_1_bound}, we
can lower bound $\phi(R)$ in terms of $1-|\lambda_{m}(R)|$ (where
$\lambda_{m}$ is the nontrivial eigenvalue that is maximum in magnitude).
Our aim is to lower bound $\phi(R)$ by $\phi(R^{k})$, but since
the norm of $R^{k}$ increases by powering, we cannot use the lemmas
directly, since we do not want a dependence on $\sigma(R)$ in the
final bound. To handle this, we transform $R$ to $A$, such that
$\phi(R)=\phi(A)$, the eigenvalues of $R$ and $A$ are the same,
but $\sigma(A)=\|A\|_{2}=1$ irrespective of the norm of $R$. 
\begin{lem}
\label{lem:transRtoA} Let $R$ be a nonnegative matrix with positive
(left and right) eigenvectors $u$ and $v$ for the PF eigenvalue
1, normalized so that $\langle u,v\rangle=1$. Define $A=D_{u}^{\frac{1}{2}}D_{v}^{-\frac{1}{2}}RD_{u}^{-\frac{1}{2}}D_{v}^{\frac{1}{2}}$.
Then the following hold for $A$: 
\begin{enumerate}
\item $\phi(A)=\phi(R)$. 
\item For every $i$, $\lambda_{i}(A)=\lambda_{i}(R)$. 
\item $\|A\|_{2}=1$. 
\end{enumerate}
\end{lem}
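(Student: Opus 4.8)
The plan is to verify the three claims essentially by direct computation, exploiting the fact that $A$ is obtained from $R$ by a diagonal similarity transformation by the positive diagonal matrix $W = D_u^{1/2} D_v^{-1/2}$. Write $w = D_u^{1/2} D_v^{1/2}\mathbf{1}$, i.e. $w_i = \sqrt{u_i v_i}$, so that $w$ is a positive vector with $\langle w,w\rangle = \sum_i u_i v_i = \langle u,v\rangle = 1$. The key preliminary observation, which drives all three parts, is that $w$ is simultaneously a left and right eigenvector of $A$ for eigenvalue $1$: indeed $A w = W R W^{-1} W D_v^{1/2}D_u^{-1/2}\cdot(\text{rearrange})$; more cleanly, $W^{-1} w = D_u^{-1/2}D_v^{1/2}D_u^{1/2}D_v^{1/2}\mathbf 1 = D_v \mathbf 1 = v$, and $A w = W R W^{-1} w = W R v = W v = D_u^{1/2}D_v^{-1/2} v = D_u^{1/2}D_v^{1/2}\mathbf 1 = w$; symmetrically, $W w = D_u \mathbf 1 = u$, so $A^T w = W^{-1} R^T W w = W^{-1} R^T u = W^{-1} u = w$. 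Thus $w$ plays for $A$ the role that the all-ones vector plays for a doubly stochastic matrix, except $A$ need not be nonnegative.

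For part (2), similarity transformations preserve the spectrum: $A = W R W^{-1}$ with $W$ invertible, so $\lambda_i(A) = \lambda_i(R)$ for all $i$, with the same ordering since the PF eigenvalue $1$ is preserved (it has eigenvector $w$ which is positive) and real parts of the remaining eigenvalues are unchanged. For part (1), I would unwind Definition \ref{def:gen-edge-expansion}. Since $A$ has positive left and right eigenvector $w$ for eigenvalue $1$ with $\langle w,w\rangle = 1$, the Eulerian matrix used to define $\phi(A)$ is $D_w A D_w$. Now compute $D_w A D_w = D_w W R W^{-1} D_w$. Because all matrices involved are diagonal, $D_w W = D_{u^{1/2}v^{1/2}} D_{u^{1/2}v^{-1/2}} = D_u$ and $W^{-1} D_w = D_{u^{-1/2}v^{1/2}} D_{u^{1/2}v^{1/2}} = D_v$, hence $D_w A D_w = D_u R D_v$, which is exactly the Eulerian matrix defining $\phi(R)$. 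Since $\phi_S$ depends only on this Eulerian matrix (numerator $\langle \mathbf 1_S, D_u R D_v \mathbf 1_{\overline S}\rangle$, denominator the corresponding balanced normalization), we get $\phi_S(A) = \phi_S(R)$ for every cut $S$, and taking the minimum gives $\phi(A) = \phi(R)$.

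For part (3), I need $\|A\|_2 = 1$. The lower bound $\|A\|_2 \geq 1$ is immediate since $A$ has eigenvalue $1$ (from part (2)), so the spectral radius, hence the operator norm, is at least $1$. For the upper bound $\|A\|_2 \leq 1$, the natural route is to show $A^T A \preceq I$, equivalently that $A$ is a contraction. This is where the real content lies, and I expect it to be the main obstacle. The idea: $A = D_w^{-1}(D_w A D_w)D_w^{-1} \cdot D_w \cdots$ — more directly, since $D_w A D_w = D_u R D_v$ is a nonnegative Eulerian matrix whose row sums and column sums are all equal (each row of $D_u R D_v$ sums to $(D_u R D_v \mathbf 1)_i = (D_u R v)_i = (D_u v)_i = u_i v_i = w_i^2$, and likewise columns), one can write $A = D_w^{-1} S D_w^{-1}$ where $S = D_u R D_v$ has $S\mathbf 1 = D_{w^2}\mathbf 1$ and $S^T \mathbf 1 = D_{w^2}\mathbf 1$. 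Then for any vector $x$, $\|Ax\|_2^2 = \langle D_w^{-1} S D_w^{-1} x, D_w^{-1} S D_w^{-1} x\rangle$; substituting $y = D_w^{-1} x$ and using that $P := D_{w^2}^{-1} S$ and $Q := S D_{w^2}^{-1}$ are respectively row-stochastic and column-stochastic nonnegative matrices, one shows $\|Ax\|_2^2 = \langle y, S^T D_{w^2}^{-1} S\, y\rangle \leq \langle y, D_{w^2} y\rangle = \|x\|_2^2$ by a Cauchy–Schwarz / convexity argument applied entrywise (the matrix $S^T D_{w^2}^{-1} S$ is nonnegative with row sums $w_i^2$, so $S^T D_{w^2}^{-1} S \preceq D_{w^2}$ in the sense that follows from $\langle y, Ny\rangle \le \langle y, D_{\text{rowsums}(N)} y\rangle$ for nonnegative symmetric $N$, which is the statement that the weighted graph Laplacian is PSD). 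I would carry this out by first recording the row/column sum identity for $D_u R D_v$, then invoking the standard fact that for a nonnegative matrix $N$ with $N = N^T$, one has $x^T N x \le x^T D x$ where $D_{ii} = \sum_j N_{ij}$; applying this to $N = S^T D_{w^2}^{-1} S$ after checking its row sums equal $w_i^2$ finishes the bound. The delicate point to get right is bookkeeping the diagonal conjugations so that $S^T D_{w^2}^{-1} S$ really does have the claimed row sums — this uses both $S\mathbf 1 = w^2$ and $S^T\mathbf 1 = w^2$, i.e. the Eulerian property, in an essential way.
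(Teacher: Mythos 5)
Your parts (1) and (2) follow the paper's proof almost verbatim: the identity $D_w A D_w = D_u R D_v$ yields $\phi_S(A)=\phi_S(R)$ cut-by-cut, and invariance of the spectrum under similarity gives (2). Two remarks on the rest.

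First, a small factual slip: you write that ``$A$ need not be nonnegative,'' but it is. Since $A = D_u^{1/2}D_v^{-1/2} R D_u^{-1/2}D_v^{1/2}$ with $u,v$ positive, the conjugating diagonal matrices have strictly positive entries, so $A_{ij} = \sqrt{u_i v_j/(v_i u_j)}\, R_{ij} \ge 0$. This matters because it is exactly what the paper's proof of (3) exploits.

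Second, your argument for part (3) is correct but takes a genuinely different route. The paper's proof is a two-line Perron--Frobenius argument: set $H = A^T A$; then $H$ is nonnegative and symmetric, $Hw = A^T A w = A^T w = w$, so $H$ has a \emph{positive} eigenvector for eigenvalue $1$, and by part (2) of Theorem \ref{thm:(Perron-Frobenius)} this forces $1$ to be the PF eigenvalue of $H$, whence $\sigma_1^2(A)=\lambda_1(H)=1$. You instead bound the quadratic form directly: writing $A = D_w^{-1} S D_w^{-1}$ with $S = D_u R D_v$ Eulerian with row and column sums $w_i^2$, you reduce $\|A\|_2 \le 1$ to the positive-semidefiniteness of the graph Laplacian $D_{w^2} - S^T D_{w^2}^{-1} S$, using that $S^T D_{w^2}^{-1} S$ is symmetric nonnegative with row sums $w_i^2$. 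Both are valid; the paper's is shorter and leans on Perron--Frobenius as a black box, while yours is self-contained and exposes the mechanism (the Laplacian inequality $x^T N x \le x^T D_{\mathrm{rowsums}(N)} x$ for symmetric nonnegative $N$) that makes the norm bound tick. Had you noticed $A$ is nonnegative, the Perron--Frobenius shortcut would have been available to you as well.
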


\begin{proof}
The proof is given in Appendix \ref{sec:Proof-of-transRtoA}. 
\end{proof}
Given Lemma \ref{lem:transRtoA}, we lower bound $\phi(A)$ using
$\phi(A^{k})$ in terms of $1-|\lambda_{m}(A)|$, to obtain the corresponding
bounds for $R$. 
\begin{lem}
\label{lem:mod_lambda_bound}Let $R$ be a nonnegative matrix with
positive (left and right) eigenvectors $u$ and $v$ for the PF eigenvalue
1, normalized so that $\langle u,v\rangle=1$. Let $\lambda_{m}$
be the nontrivial eigenvalue of $R$ that is maximum in magnitude
and let $\kappa=\min_{i}u_{i}\cdot v_{i}$. Then 
\[
\frac{1}{20}\cdot\frac{1-|\lambda_{m}|}{n+\ln\left(\dfrac{1}{\kappa}\right)}\leq\phi(R).
\]
\end{lem}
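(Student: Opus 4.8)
The plan is to first replace $R$ by the conjugated matrix $A=D_u^{1/2}D_v^{-1/2}RD_u^{-1/2}D_v^{1/2}$ supplied by Lemma~\ref{lem:transRtoA}, so that $\phi(A)=\phi(R)$, the eigenvalues are unchanged, $\|A\|_2=1$, and $w:=D_u^{1/2}D_v^{1/2}\mathbf 1$ is simultaneously a left and a right eigenvector of $A$ for eigenvalue $1$, with $\langle w,w\rangle=\langle u,v\rangle=1$ and every coordinate $w_i^2=u_iv_i\ge\kappa$. The point of this normalization is that powers of $A$ do not blow up in norm. Since $A$ is nonnegative with positive shared $1$-eigenvector $w$, Lemma~\ref{lem:pow_bound} applied to $A$ gives $\phi(A^k)\le k\,\phi(A)=k\,\phi(R)$ for every $k$, so it suffices to exhibit a $k$ of size $O\!\big((n+\ln(1/\kappa))/(1-|\lambda_m|)\big)$ for which $\phi(A^k)\ge\tfrac14$.

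To control $A^k$, I would complete $w$ to an orthonormal basis; since $w$ is both a left and a right $1$-eigenvector, $A$ is block-diagonal in this basis, $A\cong 1\oplus A'$ with $A'\in\mathbb C^{(n-1)\times(n-1)}$, $\|A'\|_2\le\|A\|_2=1$, and the eigenvalues of $A'$ are exactly the nontrivial eigenvalues of $R$, hence all of modulus at most $|\lambda_m|<1$ (if $|\lambda_m|=1$ the claimed bound is trivial since its right side is $0$). Taking a Schur decomposition $A'=UTU^*$ with $T$ upper triangular, $\|T\|_2\le 1$ and $|T_{ii}|\le|\lambda_m|$, Lemma~\ref{lem:T_norm_1_bound} yields $\|T^k\|_2\le\epsilon$, hence $\|(A')^k\|_2\le\epsilon$, once $k\ge\big(4(n-1)+2\ln((n-1)/\epsilon)\big)/(1-|\lambda_m|)$. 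Since $ww^T\cong 1\oplus 0$ in the same basis, this says precisely $\|A^k-ww^T\|_2\le\epsilon$.

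Finally I would estimate $\phi_S(A^k)$ directly from Definition~\ref{def:gen-edge-expansion} with $u=v=w$. For a cut $S$ with $\mu_S:=\sum_{i\in S}w_i^2\le\tfrac12$, the denominator equals $\min\{\mu_S,1-\mu_S\}=\mu_S$ (using $D_wA^kD_w\mathbf 1=D_ww=D_w^2\mathbf 1$), while, writing $x=D_w\mathbf 1_S$, $y=D_w\mathbf 1_{\overline S}$ and $A^k=ww^T+E$ with $\|E\|_2\le\epsilon$, the numerator is $\langle x,A^ky\rangle=\langle x,w\rangle\langle w,y\rangle+\langle x,Ey\rangle=\mu_S(1-\mu_S)+\langle x,Ey\rangle$, where $|\langle x,Ey\rangle|\le\epsilon\,\|x\|_2\|y\|_2=\epsilon\sqrt{\mu_S(1-\mu_S)}\le\epsilon\sqrt{\mu_S}$. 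Hence $\phi_S(A^k)\ge(1-\mu_S)-\epsilon/\sqrt{\mu_S}\ge\tfrac12-\epsilon/\sqrt{\mu_S}$. Now every nonempty $S$ has $\mu_S\ge\kappa$, so choosing $\epsilon=\sqrt{\kappa}/4$ gives $\phi_S(A^k)\ge\tfrac14$ for all such $S$, i.e. $\phi(A^k)\ge\tfrac14$. Plugging this $\epsilon$ into the lower bound required on $k$ and using $\kappa\le 1/n$ gives an admissible choice with $k\le 5\,(n+\ln(1/\kappa))/(1-|\lambda_m|)$, and then $\phi(R)=\phi(A)\ge\phi(A^k)/k\ge 1/(4k)\ge\tfrac1{20}\cdot\frac{1-|\lambda_m|}{n+\ln(1/\kappa)}$.

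The hard part is the behaviour of $\phi_S(A^k)$ on cuts $S$ of tiny $w$-measure, where the naive perturbation estimate $\epsilon/\sqrt{\mu_S}$ degrades; the resolution is the pointwise bound $w_i^2=u_iv_i\ge\kappa$, which forces $\mu_S\ge\kappa$ and so requires driving the perturbation only below $\tfrac14\sqrt\kappa$, i.e. powering up to $k=\Theta\!\big((n+\ln(1/\kappa))/(1-|\lambda_m|)\big)$ — this is exactly the origin of the $\ln(1/\kappa)$ term. The rest is bookkeeping: justifying the block/Schur reductions and tracking the constants (the $4$'s in Lemma~\ref{lem:T_norm_1_bound}, the factor $\tfrac14$, and $\kappa\le1/n$) precisely enough to land the stated constant $20$.
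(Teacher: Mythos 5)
Your proposal is correct and follows essentially the same route as the paper's proof: transform $R$ to the norm-$1$ matrix $A$ via Lemma~\ref{lem:transRtoA}, split off the PF direction to get $A^k = w w^T + B^k$, use the Schur form together with Lemma~\ref{lem:T_norm_1_bound} to drive $\|B^k\|_2$ below a threshold depending on $\kappa$, conclude that $\phi(A^k)$ is bounded below by a constant, and divide by $k$ using Lemma~\ref{lem:pow_bound}. The one cosmetic difference is in how the perturbation controls $\phi_S(A^k)$: the paper passes to an entrywise bound $|B^k(i,j)|\le\epsilon$ with $\epsilon=\Theta(\kappa)$ so that each $A^k(i,j)\ge(1-c)w_iw_j$, while you apply Cauchy--Schwarz directly to $\langle D_w\mathbf 1_S, B^k D_w\mathbf 1_{\overline S}\rangle$ and use $\mu_S\ge\kappa$, which lets you get away with the larger threshold $\epsilon=\sqrt\kappa/4$; both routes contribute only additively to the $\ln(1/\kappa)$ term and land the same constant.
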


\begin{proof}
The proof is given in Appendix \ref{sec:Proof-of-mod-lambda-bound}. 
\end{proof}
Given Lemma \ref{lem:mod_lambda_bound}, we use the trick of lazy
random walks to get a bound on $1-\re\lambda_{2}(R)$ from a bound
on $1-|\lambda_{m}(R)|$.
\begin{lem}
\label{lem:real-lambda-bound} Let $R$ be a nonnegative matrix with
positive (left and right) eigenvectors $u$ and $v$ for the PF eigenvalue
1, normalized so that $\langle u,v\rangle=1$. Let $\kappa=\min_{i}u_{i}\cdot v_{i}$.
Then 
\[
\frac{1}{30}\cdot\dfrac{1-\re\lambda_{2}(R)}{n+\ln\left(\frac{1}{\kappa}\right)}\leq\phi(R).
\]
For any doubly stochastic matrix $A$, 
\[
\dfrac{1-\re\lambda_{2}(A)}{35\cdot n}\leq\phi(A),
\]
and thus 
\[
\dfrac{1}{35\cdot n}\leq\Gamma(n).
\]
\end{lem}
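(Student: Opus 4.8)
The statement to prove is Lemma~\ref{lem:real-lambda-bound}, and the natural route is to reduce it to Lemma~\ref{lem:mod_lambda_bound}, which already gives a lower bound on $\phi(R)$ in terms of the \emph{magnitude} gap $1-|\lambda_m(R)|$. The only gap to close is that $1-|\lambda_m|$ can be much larger than $1-\re\lambda_2$ (e.g.\ when some nontrivial eigenvalue sits near $-1$ on the unit circle). The standard fix is a ``laziness'' trick: replace $R$ by a convex combination with the identity, which drags the spectrum toward $1$, converting the magnitude gap into the real-part gap while changing $\phi$ only by an explicit factor.

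\textbf{The lazy walk.} Fix a parameter $c>0$ and set $R_c=\frac{1}{1+c}(R+cI)$. Since $R^Tu=u$ and $Rv=v$, the same $u,v$ are positive left/right eigenvectors of $R_c$ for eigenvalue $1$, still normalized by $\langle u,v\rangle=1$, so $\kappa=\min_i u_i v_i$ is unchanged; $R_c$ is nonnegative with every other eigenvalue of modulus $\le 1$, so $1$ is its PF eigenvalue and Lemma~\ref{lem:mod_lambda_bound} applies to $R_c$. Because $D_uR_cD_v$ differs from $\frac{1}{1+c}D_uRD_v$ only in its diagonal, and $D_uR_cD_v\mathbf{1}=\frac{1}{1+c}(D_uv+cD_uv)=D_uD_v\mathbf{1}$ (the same vector as for $R$), one checks that $\phi_S(R_c)=\frac{1}{1+c}\phi_S(R)$ for every cut $S$, over the same admissible family; hence $\phi(R_c)=\frac{1}{1+c}\phi(R)$. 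For the spectrum, the eigenvalues of $R_c$ are $\mu_i=\frac{\lambda_i+c}{1+c}$, and for a nontrivial $\lambda=x+iy$ with $x^2+y^2\le1$ one has $|\lambda+c|^2=(c+x)^2+y^2\le(1+c)^2-2c(1-x)$, so $|\mu|\le\sqrt{1-\frac{2c(1-x)}{(1+c)^2}}\le 1-\frac{c(1-x)}{(1+c)^2}$. Minimizing over nontrivial eigenvalues and using $\min_i(1-\re\lambda_i)=1-\re\lambda_2(R)$ gives $1-|\lambda_m(R_c)|\ \ge\ \frac{c}{(1+c)^2}\,(1-\re\lambda_2(R))$.

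\textbf{Combining.} Applying Lemma~\ref{lem:mod_lambda_bound} to $R_c$ and using the two facts above,
\[
\frac{1}{1+c}\,\phi(R)=\phi(R_c)\ \ge\ \frac{1}{20}\cdot\frac{1-|\lambda_m(R_c)|}{n+\ln(1/\kappa)}\ \ge\ \frac{1}{20}\cdot\frac{c}{(1+c)^2}\cdot\frac{1-\re\lambda_2(R)}{n+\ln(1/\kappa)},
\]
so $\phi(R)\ge\frac{c}{20(1+c)}\cdot\frac{1-\re\lambda_2(R)}{n+\ln(1/\kappa)}$. Choosing $c=2$ gives the stated constant $\frac{1}{30}$. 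For the doubly stochastic case, take $u=v=\mathbf{1}/\sqrt n$, so $\langle u,v\rangle=1$ and $\kappa=1/n$, whence $\ln(1/\kappa)=\ln n$; applying the displayed inequality with a sufficiently large $c$ (for instance $c=6$, or letting $c\to\infty$ and using that the right-hand side increases in $c$) yields $\phi(A)\ge\frac{3}{70}\cdot\frac{1-\re\lambda_2(A)}{n+\ln n}$, and since $\frac n2\ge\ln n$ for every integer $n\ge1$ this is at least $\frac{1-\re\lambda_2(A)}{35n}$. The bound $\Gamma(n)\ge\frac{1}{35n}$ is then immediate from Definition~\ref{def:gamma}.

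\textbf{Main obstacle.} Conceptually nothing here is deep; the only real care is constant-wrangling in the last step. The crude estimate $n+\ln n\le 2n$ only yields $\frac{1}{60n}$, so one must genuinely exploit that $\ln n$ is much smaller than $n$ (equivalently, that the laziness parameter $c$ may be taken large), or else dispatch the few small values of $n$ separately (e.g.\ via the symmetric Cheeger bound applied to $\tfrac12(A+A^T)$, whose $\phi$ equals $\phi(A)$ and whose second eigenvalue is at least $\re\lambda_2(A)$). It is also worth verifying explicitly that all hypotheses of Lemma~\ref{lem:mod_lambda_bound} transfer to $R_c$: nonnegativity, the positivity and normalization $\langle u,v\rangle=1$ of the shared eigenvectors, and that $1$ remains the PF eigenvalue.
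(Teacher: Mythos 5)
Your proof is correct and follows essentially the same route as the paper: lazify $R$ to convert the real-part gap into a modulus gap, then invoke Lemma~\ref{lem:mod_lambda_bound}; the paper parametrizes the lazification as $pI+(1-p)R$ and lets $p\to1$, whereas you fix $c$ in $\frac{1}{1+c}(R+cI)$. Your unit-disk estimate $1-|\mu|\ge\frac{c}{(1+c)^2}(1-\re\lambda)$ via $\sqrt{1-z}\le1-z/2$ is marginally sharper than the paper's $\frac12 g p(1-p)$ (which detours through $1-x\le e^{-x}$), and that slack compensates for your using the stated $\frac{1}{20}$ from Lemma~\ref{lem:mod_lambda_bound} where the paper reuses the tighter $\frac{1}{15}$ appearing inside that lemma's proof; your self-contained $n/2\ge\ln n$ step to reach $\frac{1}{35n}$ is also cleaner than the paper's back-reference to the numerics of Lemma~\ref{lem:T_norm_1_bound}.
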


\begin{proof}
The proof is given in Appendix \ref{sec:Proof-of-real-lambda-bound}. 
\end{proof}
This completes the proof of the lower bound on $\phi$ in Theorem
\ref{thm:gen-phi-lambda2}, and the upper bound on $\phi$ in Theorem
\ref{thm:gen-phi-lambda2} is shown in Appendix \ref{sec:Proof-of-gen-fied}.
Combined with Theorem \ref{thm:constr_main}, this also completes
the proof of Theorem \ref{thm:main}.

\section{\label{sec:Mix}Mixing time}

We now study the mixing time of nonnegative matrices, and relate it
to all the quantities we have studied so far. To motivate the definition
of mixing time for general nonnegative matrices, we first consider
the mixing time of doubly stochastic matrices. The mixing time of
a doubly stochastic matrix $A$ (i.e., of the underlying Markov chain)
is the worst-case number of steps required for a random walk starting
at any vertex to reach a distribution approximately uniform over the
vertices. To avoid complications of periodic chains, we assume that
$A$ is $\frac{1}{2}$-lazy, meaning that for every $i$, $A_{i,i}\geq\frac{1}{2}$.
Given any doubly stochastic matrix $A$, it can be easily converted
to the lazy random walk $\frac{1}{2}I+\frac{1}{2}A$. This is still
doubly stochastic and in the conversion both $\phi(A)$ and the spectral
gap are halved. The mixing time will be finite provided only that
the chain is connected. Consider the indicator vector $\mathbf{1}_{\{i\}}$
for any vertex $i$. We want to find the smallest $\tau$ such that
$A^{\tau}\mathbf{1}_{\{i\}}\approx\frac{1}{n}\mathbf{1}$ or $A^{\tau}\mathbf{1}_{\{i\}}-\frac{1}{n}\mathbf{1}\approx0$,
which can further be written as $\left(A^{\tau}-\frac{1}{n}\mathbf{1}\cdot\mathbf{1}^{T}\right)\mathbf{1}_{\{i\}}\approx0$.
Concretely, for any $\epsilon$, we want to find $\tau=\tau_{\epsilon}(A)$
such that for any $i$, 
\[
\left\Vert \left(A^{\tau}-\frac{1}{n}\mathbf{1}\cdot\mathbf{1}^{T}\right)\mathbf{1}_{\{i\}}\right\Vert _{1}\leq\epsilon.
\]
Given such a value of $\tau$, for any vector $x$ such that $\|x\|_{1}=1$,
we get 
\[
\left\Vert \left(A^{\tau}-\frac{1}{n}\mathbf{1}\cdot\mathbf{1}^{T}\right)x\right\Vert _{1}=\left\Vert \sum\limits _{i}\left(A^{\tau}-\frac{1}{n}\mathbf{1}\cdot\mathbf{1}^{T}\right)x_{i}\mathbf{1}_{\{i\}}\right\Vert _{1}\leq\sum\limits _{i}|x_{i}|\left\Vert \left(A^{\tau}-\frac{1}{n}\mathbf{1}\cdot\mathbf{1}^{T}\right)\mathbf{1}_{\{i\}}\right\Vert _{1}\leq\sum\limits _{i}|x_{i}|\cdot\epsilon=\epsilon.
\]
Thus, the mixing time $\tau_{\epsilon}(A)$ is the number $\tau$
for which $\left\Vert (A^{\tau}-J)\cdot x\right\Vert _{1}\leq\epsilon$
for any $x$ such that $\|x\|_{1}=1$.

We want to extend this definition to any nonnegative matrix $R$ with
PF eigenvalue 1 and corresponding positive left and right eigenvectors
$u$ and $v$. Note that if $R$ is reducible (i.e., $\phi(R)=0$),
then the mixing time is infinite. Further, if $R$ is periodic, then
mixing time is again ill-defined. Thus, we again assume that $R$
is irreducible and $\frac{1}{2}$-lazy, i.e. $R_{i,i}\geq\frac{1}{2}$
for every $i$. Let $x$ be any nonnegative vector for the sake of
exposition, although our final definition will not require nonnegativity
and will hold for any $x$. We want to find $\tau$ such that $R^{\tau}x$
about the same as the component of $x$ along the direction of $v$.
Further, since we are right-multiplying and want convergence to the
right eigenvector $v$, we will define the $\ell_{1}$-norm using
the left eigenvector $u$. Thus, for the starting vector $x$, instead
of requiring $\|x\|_{1}=1$ as in the doubly stochastic case, we will
require $\|D_{u}x\|_{1}=1$. Since $x$ is nonnegative, $\|D_{u}x\|_{1}=\langle u,x\rangle=1$.
Thus, we want to find $\tau$ such that $R^{\tau}x\approx v$, or
$\left(R^{\tau}-v\cdot u^{T}\right)x\approx0$. Since we measured
the norm of the starting vector $x$ with respect to $u$, we will
also measure the norm of the final vector $\left(R^{\tau}-v\cdot u^{T}\right)x$
with respect to $u$. Thus we arrive at the following definition. 
\begin{defn}
\label{def:gen-Mixing-time}(Mixing time of general nonnegative matrices
$R$) Let $R$ be a $\frac{1}{2}$-lazy, irreducible nonnegative matrix
with PF eigenvalue 1 with $u$ and $v$ as the corresponding positive
left and right eigenvectors, where $u$ and $v$ are normalized so
that $\langle u,v\rangle=\|D_{u}v\|_{1}=1$. Then the mixing time
$\tau_{\epsilon}(R)$ is the smallest number $\tau$ such that $\left\Vert D_{u}\left(R^{\tau}-v\cdot u^{T}\right)x\right\Vert _{1}\leq\epsilon$
for every vector $x$ with $\|D_{u}x\|_{1}=1$. 
\end{defn}

We remark that similar to the doubly stochastic case, using the triangle
inequality, it is sufficient to find mixing time of standard basis
vectors $\mathbf{1}_{\{i\}}$. Let $y_{i}=\frac{\mathbf{1}_{\{i\}}}{\|D_{u}\mathbf{1}_{\{i\}}\|_{1}}$,
then $y_{i}$ is nonnegative, $\|D_{u}y_{i}\|_{1}=\langle u,y_{i}\rangle=1$,
then for any $x$, such that $\|D_{u}x\|_{1}=1$, we can write 
\[
x=\sum\limits _{i}c_{i}\mathbf{1}_{\{i\}}=\sum_{i}c_{i}\|D_{u}\mathbf{1}_{\{i\}}\|_{1}y_{i}
\]
with 
\[
\|D_{u}x\|_{1}=\left\Vert D_{u}\sum\limits _{i}c_{i}\mathbf{1}_{\{i\}}\right\Vert _{1}=\sum\limits _{i}|c_{i}|\|D_{u}\mathbf{1}_{\{i\}}\|_{1}=1.
\]
Thus, if for every $i$, $\left\Vert D_{u}\left(R^{\tau}-v\cdot u^{T}\right)y_{i}\right\Vert _{1}\leq\epsilon$,
then 
\[
\left\Vert D_{u}\left(R^{\tau}-v\cdot u^{T}\right)x\right\Vert _{1}=\left\Vert D_{u}\left(R^{\tau}-v\cdot u^{T}\right)\sum\limits _{i}c_{i}\|D_{u}\mathbf{1}_{\{i\}}\|_{1}y_{i}\right\Vert _{1}\leq\sum\limits _{i}c_{i}\|D_{u}\mathbf{1}_{\{i\}}\|_{1}\left\Vert D_{u}\left(R^{\tau}-v\cdot u^{T}\right)y_{i}\right\Vert _{1}\leq\epsilon.
\]
Thus, it is sufficient to find mixing time for every nonnegative $x$
with $\|D_{u}x\|_{1}=\langle u,x\rangle=1$, and it will hold for
all $x$.

For the case of \emph{reversible }nonnegative matrices $M$ with PF
eigenvalue 1, the mixing time is well-understood, and it is easily
shown that 
\begin{equation}
\tau_{\epsilon}(M)\leq\dfrac{\ln\left(\frac{n}{\kappa\cdot\epsilon}\right)}{1-\lambda_{2}(M)}\stackrel{\text{(Theorem \ref{thm:cheeg})}}{\leq}\dfrac{2\cdot\ln\left(\frac{n}{\kappa\cdot\epsilon}\right)}{\phi^{2}(M)}.\label{eq:sym-mixt}
\end{equation}
We will give corresponding bounds for the mixing time of general nonnegative
matrices.

\subsection{Mixing time and singular values}

We first show a simple lemma relating the mixing time of nonnegative
matrices to the second singular value. This lemma is powerful enough
to recover the bounds obtained by Fill \cite{fill1991eigenvalue}
and Mihail \cite{mihail1989conductance} in an elementary way. Since
the largest singular value of any general nonnegative matrix $R$
with PF eigenvalue 1 could be much larger than 1, the relation between
mixing time and second singular value makes sense only for nonnegative
matrices with the same left and right eigenvector for eigenvalue 1,
which have largest singular value 1 by Lemma \ref{lem:transRtoA}. 
\begin{lem}
\label{lem:mixt-singval} (Mixing time and second singular value)
Let $A$ be a nonnegative matrix (not necessarily lazy) with PF eigenvalue
1, such that $Aw=w$ and $A^{T}w=w$ for some $w$ with $\langle w,w\rangle=1$,
and let $\kappa=\min_{i}w_{i}^{2}$. Then for every $c>0$, 
\[
\tau_{\epsilon}(A)\ \leq\ \dfrac{c\cdot\ln\left(\frac{\sqrt{n}}{\sqrt{\kappa}\cdot\epsilon}\right)}{1-\sigma_{2}^{c}(A)}\ \leq\ \dfrac{c\cdot\ln\left(\frac{n}{\kappa\cdot\epsilon}\right)}{1-\sigma_{2}^{c}(A)}.
\]
\end{lem}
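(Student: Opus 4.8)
The plan is to bound $\|A^\tau - w w^T\|_2$ by $\sigma_2^\tau(A)$ and then convert the operator-norm (i.e. $\ell_2$) bound into an $\ell_1$ bound at the cost of a $\sqrt{n/\kappa}$ factor, after which the ``$c$'' in the statement is obtained simply by replacing $A$ with $A^c$ (whose second singular value is $\sigma_2^c(A)$, and whose fixed vector and $\kappa$ are unchanged). First I would observe that since $Aw = w$ and $A^T w = w$ with $\langle w, w\rangle = 1$, the rank-one matrix $P = w w^T$ is the orthogonal projector onto $\operatorname{span}(w)$, and it commutes with $A$: $A P = w w^T = P A$. Hence $B := A - P$ satisfies $B^\tau = A^\tau - P$ for all $\tau \geq 1$ (the cross terms vanish because $A P = P A = P$ and $P^2 = P$). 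Moreover $B$ maps $w^\perp$ into $w^\perp$ and kills $w$, so $\|B\|_2 = \max_{u \perp w, \|u\|_2 = 1}\|Au\|_2$. I would argue this equals $\sigma_2(A)$: by Lemma~\ref{lem:transRtoA} (applied with $u=v=w$, so that $A$ is already in the ``$A$'' form) $\|A\|_2 = \sigma_1(A) = 1$ with $w$ being a top singular vector on both sides (indeed $Aw=w=A^Tw$ forces $w$ to be a right and left singular vector for singular value $1$), so restricting the quadratic form $\|Au\|_2$ to $w^\perp$ gives exactly $\sigma_2(A)$ by the variational characterization of singular values. Therefore $\|A^\tau - w w^T\|_2 = \|B^\tau\|_2 \leq \|B\|_2^\tau = \sigma_2^\tau(A)$.

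Next I would pass from $\ell_2$ to $\ell_1$. For the standard basis vector $\mathbf 1_{\{i\}}$ we have $\| D_w \mathbf 1_{\{i\}}\|_1 = w_i \geq \sqrt\kappa$, so the normalized vector $y_i = \mathbf 1_{\{i\}} / w_i$ satisfies $\|y_i\|_2 = 1/w_i \leq 1/\sqrt\kappa$. Then
\[
\left\| D_w\left(A^\tau - w w^T\right) y_i \right\|_1 \;\leq\; \sqrt n \,\left\| D_w\left(A^\tau - w w^T\right) y_i \right\|_2 \;\leq\; \sqrt n \,\|D_w\|_2 \,\|A^\tau - w w^T\|_2\, \|y_i\|_2 \;\leq\; \frac{\sqrt n}{\sqrt\kappa}\,\sigma_2^\tau(A),
\]
using $\|x\|_1 \leq \sqrt n \|x\|_2$ and $\|D_w\|_2 = \max_i w_i \leq 1$. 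By the triangle-inequality reduction to basis vectors already recorded in the text (the computation following Definition~\ref{def:gen-Mixing-time}, with $u=v=w$), the same bound $\frac{\sqrt n}{\sqrt\kappa}\,\sigma_2^\tau(A)$ controls $\|D_w(A^\tau - w w^T)x\|_1$ for every $x$ with $\|D_w x\|_1 = 1$. So $\tau_\epsilon(A) \leq \tau$ as soon as $\frac{\sqrt n}{\sqrt\kappa}\,\sigma_2^\tau(A) \leq \epsilon$, i.e. $\tau \geq \frac{\ln(\sqrt n/(\sqrt\kappa\,\epsilon))}{-\ln \sigma_2(A)}$, and using $-\ln x \geq 1-x$ for $x \in (0,1]$ this gives $\tau_\epsilon(A) \leq \frac{\ln(\sqrt n/(\sqrt\kappa\,\epsilon))}{1-\sigma_2(A)}$, which is the $c=1$ case.

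Finally, for general $c > 0$ I would apply the $c=1$ bound to the matrix $A^c$ in place of $A$ (here $c$ is a positive integer; one may note $A$ is assumed $\tfrac12$-lazy so passing to a power preserves aperiodicity, and the case of non-integer $c$ is handled by monotonicity / is not needed). The matrix $A^c$ is still nonnegative, still has $A^c w = w$ and $(A^c)^T w = w$, still has $\langle w,w\rangle = 1$ and the same $\kappa$, and its second singular value is $\sigma_2^c(A)$ by the argument above ($\|(A^c)|_{w^\perp}\|_2 = \|(A|_{w^\perp})^c\|_2 = \sigma_2(A)^c$ since $A$ restricted to the invariant subspace $w^\perp$ has operator norm $\sigma_2(A)$). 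A mixing-time bound of $\tau'$ \emph{steps} for $A^c$ is a bound of $c\tau'$ steps for $A$, so $\tau_\epsilon(A) \leq c \cdot \frac{\ln(\sqrt n/(\sqrt\kappa\,\epsilon))}{1-\sigma_2^c(A)}$, and the second inequality in the statement follows from $\ln(\sqrt n/(\sqrt\kappa\,\epsilon)) = \tfrac12\ln(n/(\kappa\epsilon^2)) \leq \ln(n/(\kappa\epsilon))$ for $\epsilon \leq 1$. The only mildly delicate point — the ``main obstacle'' — is pinning down that $\|B\|_2 = \sigma_2(A)$ exactly (not just $\leq$), which is why I want to invoke Lemma~\ref{lem:transRtoA} to know $\sigma_1(A)=1$ with $w$ as the corresponding left and right singular vector, so that deflating by $w w^T$ removes precisely the top singular value; everything else is routine norm bookkeeping.
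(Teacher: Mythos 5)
Your proof follows the same skeleton as the paper's: decompose $A = w w^T + B$ with $B w = 0$ and $B^T w = 0$, bound $\|B^\tau\|_2 \le \|B\|_2^\tau \le \sigma_2^\tau(A)$, and then convert the $\ell_2$ bound to an $\ell_1$ bound at a cost of $\sqrt{n/\kappa}$. Your $\ell_1$-bookkeeping (going through $\|y_i\|_2 \le 1/\sqrt\kappa$ and $\|D_w\|_2 \le 1$) differs cosmetically from the paper's (which routes through the $\ell_1$ operator norms $\|D_w\|_1, \|D_w^{-1}\|_1$ and then $\|B^\tau\|_1 \le \sqrt n\,\|B^\tau\|_2$), but both yield the same factor. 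Your identification $\|B\|_2 = \sigma_2(A)$ via the SVD built around the singular vector $w$ is the same observation used in the proof of Lemma~\ref{lem:sigma-bound-phi}, which is what the paper cites at that step.

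Where you diverge is the treatment of general $c>0$, and here there is a genuine gap plus a slip. The paper gets the $c$-dependence for free from the algebraic identity $\sigma_2(A)^\tau = \left(\sigma_2^c(A)\right)^{\tau/c}$: plug in $\tau = \frac{c\ln(\sqrt n/(\sqrt\kappa\,\epsilon))}{1-\sigma_2^c(A)}$ and use $s^{1/(1-s)} \le e^{-1}$, which works for every real $c>0$ in one line. You instead apply the $c=1$ bound to $A^c$, which (i) only makes sense for positive integers $c$, and (ii) relies on the claim $\sigma_2(A^c) = \sigma_2^c(A)$, which is false in general — powering a non-normal operator does not power its singular values. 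What is true is $\sigma_2(A^c) = \|B^c\|_2 \le \|B\|_2^c = \sigma_2^c(A)$, and that inequality fortuitously points the right way, so your final bound for integer $c$ still holds; but you should state it as an inequality. The remaining hole is non-integer $c$: your ``handled by monotonicity'' remark does not work, because $c \mapsto \frac{c}{1-\sigma_2^c(A)}$ is \emph{increasing}, so knowing the bound at $\lceil c\rceil$ does not give it at $c$. Also, the parenthetical that ``$A$ is assumed $\tfrac12$-lazy'' contradicts the hypothesis (the lemma explicitly says ``not necessarily lazy'') and is not needed anyway, since the $\ell_2$-contraction argument never invokes aperiodicity. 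The cleanest fix is to drop the $A^c$ detour entirely and use the paper's one-line rewriting.
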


\begin{proof}
The proof is given in Appendix \ref{sec:Proof-of-mixt-sing-bound}. 
\end{proof}
For the case of $c=2$, Lemma \ref{lem:mixt-singval} was obtained
by Fill \cite{fill1991eigenvalue}, but we find our proof simpler.

\subsection{Mixing time and edge expansion}

We now relate the mixing time of general nonnegative matrices $R$
to its edge expansion $\phi(R)$. The upper bound for row stochastic
matrices $R$ in terms of $\phi(R)$ were obtained by Mihail \cite{mihail1989conductance}
and simplified by Fill \cite{fill1991eigenvalue} using Lemma \ref{lem:mixt-singval}
for $c=2$. Thus, the following lemma is not new, but we prove it
in Appendix \ref{sec:Proof-of-mixt-phi-rel} for completeness, since
our proof is simpler and holds for any nonnegative matrix $R$. 
\begin{lem}
\label{lem:mixt-phi-rel}(Mixing time and edge expansion) Let $\tau_{\epsilon}(R)$
be the mixing time of a $\frac{1}{2}$-lazy nonnegative matrix $R$
with PF eigenvalue 1 and corresponding positive left and right eigenvectors
$u$ and $v$, and let $\kappa=\min_{i}u_{i}\cdot v_{i}$. Then 
\begin{align*}
\frac{\frac{1}{2}-\epsilon}{\phi(R)}\ \leq\ \tau_{\epsilon}(R)\  & \leq\ \dfrac{4\cdot\ln\left(\frac{n}{\kappa\cdot\epsilon}\right)}{\phi^{2}(R)}.
\end{align*}
\end{lem}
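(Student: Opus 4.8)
The plan is to prove the two inequalities separately. The lower bound $\tau_\epsilon(R)\geq(\tfrac12-\epsilon)/\phi(R)$ is an elementary ``the flow across the optimal cut is slow'' argument, carried out directly on $R$. The upper bound is obtained by transporting $R$ to the matrix $A$ of Lemma~\ref{lem:transRtoA} (which has a common left/right eigenvector), proving a Cheeger-type inequality for the \emph{second singular value} of $A$, and feeding it into the singular-value mixing bound of Lemma~\ref{lem:mixt-singval}. Throughout I may assume $\phi(R)>0$, since otherwise both sides are $+\infty$.

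For the lower bound I would fix a cut $S$ achieving $\phi(R)$, so $q(S):=\sum_{i\in S}u_iv_i\leq\tfrac12$ and, by Definition~\ref{def:gen-edge-expansion}, $\langle\mathbf{1}_S,D_uRD_v\mathbf{1}_{\overline S}\rangle=\phi(R)\,q(S)$. Write $q:=D_uv$ (entries $u_iv_i$, summing to $1$) and $B:=D_uRD_u^{-1}$, which is entrywise nonnegative, \emph{column-stochastic} ($\mathbf{1}^TB=\mathbf{1}^T$ because $R^Tu=u$), and has $Bq=q$. Start the walk from $x:=\tfrac1{q(S)}D_v\mathbf{1}_S$, a valid initial vector since $\langle u,x\rangle=\|D_ux\|_1=1$, and set $\nu_t:=B^t\nu_0$ with $\nu_0:=D_uD_v\mathbf{1}_S$, the restriction of $q$ to $S$, so that $\nu_0\leq q$ entrywise and $\nu_t=D_uR^t(q(S)x)$. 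Since $B\geq0$ and $Bq=q$, induction gives $\nu_t\leq q$ entrywise for all $t$. Tracking $\langle\mathbf{1}_{\overline S},\nu_t\rangle$, column-stochasticity makes the ``internal'' terms cancel up to the nonnegative back-flow from $\overline S$ to $S$, leaving
\[
\langle\mathbf{1}_{\overline S},\nu_{t+1}\rangle-\langle\mathbf{1}_{\overline S},\nu_{t}\rangle\ \leq\ \sum_{i\in\overline S,\,j\in S}B_{ij}\,\nu_t(j)\ \leq\ \sum_{i\in\overline S,\,j\in S}B_{ij}\,q_j\ =\ \langle\mathbf{1}_{\overline S},D_uRD_v\mathbf{1}_S\rangle\ =\ \phi(R)\,q(S),
\]
the last step using the Eulerian identity $\langle\mathbf{1}_{\overline S},D_uRD_v\mathbf{1}_S\rangle=\langle\mathbf{1}_S,D_uRD_v\mathbf{1}_{\overline S}\rangle$ and then Definition~\ref{def:gen-edge-expansion}. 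Hence $\langle\mathbf{1}_{\overline S},\nu_t\rangle\leq t\,\phi(R)\,q(S)$, i.e.\ $\langle\mathbf{1}_{\overline S},D_uR^tx\rangle\leq t\,\phi(R)$. Since $D_uvu^Tx=D_uv=q$, this gives $\|D_u(R^t-vu^T)x\|_1\geq q(\overline S)-\langle\mathbf{1}_{\overline S},D_uR^tx\rangle\geq\tfrac12-t\,\phi(R)$; taking $t=\tau_\epsilon(R)$ and rearranging yields the lower bound (note laziness is not even needed here).

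For the upper bound I would pass to $A:=D_u^{1/2}D_v^{-1/2}RD_u^{-1/2}D_v^{1/2}$ and $w:=D_u^{1/2}D_v^{1/2}\mathbf{1}>0$. By Lemma~\ref{lem:transRtoA}, $Aw=A^Tw=w$, $\|A\|_2=1$, and $\phi(A)=\phi(R)$; moreover $A_{ii}=R_{ii}$ so $A$ is $\tfrac12$-lazy, and $\kappa=\min_iw_i^2=\min_iu_iv_i$. A direct computation with the similarity $P:=D_u^{1/2}D_v^{-1/2}$ (as in the proof of Lemma~\ref{lem:transRtoA}, using $Pv=w$ and $u^TP^{-1}=w^T$) gives $D_u(R^\tau-vu^T)=D_w(A^\tau-ww^T)P$ together with $\|D_ux\|_1=\|D_w(Px)\|_1$, so that $\tau_\epsilon(R)=\tau_\epsilon(A)$ exactly. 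The crux is a Cheeger bound for $\sigma_2(A)$: since $A$ is $\tfrac12$-lazy, $C:=2A-I$ is entrywise nonnegative with $Cw=C^Tw=w$, hence $\|C\|_2=1$ (Lemma~\ref{lem:transRtoA} applied to $C$); for real $x\perp w$,
\[
\|Ax\|_2^2=\tfrac14\|(I+C)x\|_2^2\leq\tfrac12\|x\|_2^2+\tfrac12\langle x,Cx\rangle=\langle x,Ax\rangle=\langle x,A_{\mathrm{sym}}x\rangle,\qquad A_{\mathrm{sym}}:=\tfrac12(A+A^T),
\]
so Courant--Fischer (the top eigenvector of $A^TA$ and of $A_{\mathrm{sym}}$ is $w$) gives $\sigma_2^2(A)\leq\lambda_2(A_{\mathrm{sym}})$. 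Now $A_{\mathrm{sym}}$ is symmetric nonnegative with PF eigenvalue $1$ (so detailed balance holds trivially) and $\phi(A_{\mathrm{sym}})=\phi(A)=\phi(R)$ since additive symmetrization does not change $\phi$; Cheeger's inequality (Theorem~\ref{thm:gen-Cheeger's-inequalities}) then yields $1-\sigma_2^2(A)\geq1-\lambda_2(A_{\mathrm{sym}})\geq\phi(R)^2/2$. Plugging this into Lemma~\ref{lem:mixt-singval} with $c=2$ gives $\tau_\epsilon(R)=\tau_\epsilon(A)\leq\frac{2\ln(n/(\kappa\epsilon))}{1-\sigma_2^2(A)}\leq\frac{4\ln(n/(\kappa\epsilon))}{\phi(R)^2}$.

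The main obstacle is this singular-value Cheeger inequality $1-\sigma_2^2(A)\geq\tfrac12\phi(A)^2$, and within it the fact that $\tfrac12$-laziness is doing all the work: it is exactly what makes $C=2A-I$ \emph{nonnegative}, which is what lets Lemma~\ref{lem:transRtoA} certify $\|C\|_2=1$ and hence lets $\|Ax\|_2^2$ be dominated by the symmetric Rayleigh quotient $\langle x,A_{\mathrm{sym}}x\rangle$; without laziness the bound is false (the directed cycle has $\sigma_2=1$ yet $\phi=2/n>0$). A secondary, bookkeeping-type point that must be checked with care is that the $D_u$-weighted $\ell_1$ mixing quantity is genuinely preserved under the \emph{non-orthogonal} similarity $R\mapsto A$, so that the identity $\tau_\epsilon(R)=\tau_\epsilon(A)$ is legitimate and not merely up to constants; this is the place where the precise form of Definition~\ref{def:gen-Mixing-time} (measuring the starting and ending vectors with the same weight $D_u$) is used.
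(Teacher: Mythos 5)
Your proof is correct. The upper bound tracks the paper's argument closely: transport $R$ to $A=D_u^{1/2}D_v^{-1/2}RD_u^{-1/2}D_v^{1/2}$ so that the mixing times agree exactly, use laziness to show $\sigma_2^2(A)\leq\lambda_2\bigl(\tfrac12(A+A^T)\bigr)$ (your expansion of $\|Ax\|_2^2$ via $C=2A-I$ is an equivalent rearrangement of the paper's identity $AA^T=\tfrac12(A+A^T)+\tfrac14(2A-I)(2A^T-I)-\tfrac14 I$), apply Cheeger to the additive symmetrization, and finish with Lemma~\ref{lem:mixt-singval} at $c=2$.

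The lower bound, however, takes a genuinely different route. The paper proves the lower bound by showing that the definition of mixing time forces $\phi(A^{\tau})\geq\tfrac12-\epsilon$ (plugging standard basis vectors into the $D_w$-weighted $\ell_1$ condition) and then invokes the general sub-multiplicativity lemma $\phi(A^{\tau})\leq\tau\cdot\phi(A)$ (Lemma~\ref{lem:pow_bound}), whose own proof is a block-matrix/cancellation argument. You instead track the escaping mass directly: conjugate to the column-stochastic $B=D_uRD_u^{-1}$, start from the (sub)distribution $\nu_0=D_uD_v\mathbf{1}_S$, use the domination $\nu_t\leq q$ (preserved because $B\geq0$ and $Bq=q$) to bound the per-step leakage across the cut by $\phi(R)\,q(S)$, and conclude that the walk cannot be $\epsilon$-mixed before $(\tfrac12-\epsilon)/\phi(R)$ steps. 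This is a self-contained ``conductance lower bound on mixing time'' argument carried out directly on $R$, bypassing the transfer to $A$ and the explicit citation of Lemma~\ref{lem:pow_bound}; effectively you re-derive inline, for the one cut that matters, the piece of sub-multiplicativity the bound needs, by a probabilistic domination argument rather than the paper's algebraic one. Both are fine; the paper's route reuses a lemma it needs elsewhere anyway, while yours is a shorter standalone derivation, and your remark that laziness is not needed for the lower bound is also correct and matches the paper's proof.
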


\begin{proof}
The proof is given in Appendix \ref{sec:Proof-of-mixt-phi-rel}. 
\end{proof}

\subsection{Mixing time and spectral gap}

We obtain bounds for the mixing time of nonnegative matrices in terms
of the spectral gap, using methods similar to the ones used to obtain
the upper bound on $\phi$ in Theorem \ref{thm:gen-phi-lambda2}. 
\begin{lem}
\label{lem:mixt-lambda2} (Mixing time and spectral gap) Let $\tau_{\epsilon}(R)$
be the mixing time of a $\frac{1}{2}$-lazy nonnegative matrix $R$
with PF eigenvalue 1 and corresponding positive left and right eigenvectors
$u$ and $v$, and let $\kappa=\min_{i}u_{i}\cdot v_{i}$. Then 
\begin{align*}
\frac{\frac{1}{2}-\epsilon}{\sqrt{2\cdot\left(1-\re\lambda_{2}(R)\right)}}\ \leq\ \tau_{\epsilon}(R)\  & \leq\ 20\cdot\frac{n+\ln\left(\dfrac{1}{\kappa\cdot\epsilon}\right)}{1-\re\lambda_{2}(R)}.
\end{align*}
\end{lem}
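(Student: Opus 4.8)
\textbf{Plan of proof for Lemma \ref{lem:mixt-lambda2}.}

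The strategy is to prove the two inequalities separately, reusing machinery already available in the excerpt. For the \emph{lower} bound, I would combine the lower bound in Lemma \ref{lem:mixt-phi-rel}, namely $\tau_\epsilon(R)\geq (\tfrac12-\epsilon)/\phi(R)$, with Fiedler's upper bound on $\phi$ from Theorem \ref{thm:gen-phi-lambda2} (the general version), $\phi(R)\leq\sqrt{2(1-\re\lambda_2(R))}$. These chain together immediately to give $\tau_\epsilon(R)\geq(\tfrac12-\epsilon)/\sqrt{2(1-\re\lambda_2(R))}$, so this direction is essentially free and should be dispatched in one or two lines.

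For the \emph{upper} bound, I would follow the route used for the lower bound on $\phi$ in Section \ref{sec:lower-bound-on-phi-gen}: pass from $R$ to the conjugated matrix $A = D_u^{1/2}D_v^{-1/2} R D_u^{-1/2} D_v^{1/2}$ of Lemma \ref{lem:transRtoA}, which has the same eigenvalues, has $\|A\|_2=1$, and has common left/right PF eigenvector $w=D_u^{1/2}D_v^{1/2}\mathbf 1$. Mixing of $R$ (measured in the $u$-weighted $\ell_1$ norm) translates into control of $\|A^\tau - w w^T\|_2$ up to the usual $\sqrt n/\sqrt\kappa$-type losses from passing between $\ell_1$, $\ell_2$, and the $D_u$-reweighting (here $\kappa=\min_i u_iv_i = \min_i w_i^2$). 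Writing the Schur decomposition of $A$ restricted to the orthogonal complement of $w$ as an upper-triangular $T$ with $\|T\|_2\le 1$ and diagonal entries the nontrivial eigenvalues of $A$, the $\tfrac12$-laziness of $R$ (inherited suitably by $A$) forces $|\lambda_i(A)|\le \tfrac12(1+\re\lambda_i)\le 1-\tfrac12(1-\re\lambda_2(R))$ for every nontrivial eigenvalue, so every diagonal entry of $T$ has magnitude at most $\alpha := 1-\tfrac12(1-\re\lambda_2(R))<1$. Now apply Lemma \ref{lem:T_norm_1_bound}: $\|T^k\|_2\le\epsilon'$ once $k\ge (4n+2\ln(n/\epsilon'))/(1-\alpha) = (8n+4\ln(n/\epsilon'))/(1-\re\lambda_2(R))$. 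Choosing $\epsilon'$ to absorb the $\sqrt n/\sqrt\kappa$ conversion factors (i.e. $\epsilon' \approx \epsilon\sqrt\kappa/\sqrt n$, contributing $\ln(1/(\kappa\epsilon))$ inside the logarithm) and bookkeeping the constants yields $\tau_\epsilon(R)\le 20\,(n+\ln(1/(\kappa\epsilon)))/(1-\re\lambda_2(R))$.

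The main obstacle is the careful norm bookkeeping in the conjugation step: one must verify that $\|D_u(R^\tau - vu^T)x\|_1$ is genuinely controlled by $\|(A^\tau - ww^T)\|_2$ after accounting for the change of variables $x\mapsto D_u^{1/2}D_v^{-1/2}x$ and the left-multiplication by $D_u$, and that the resulting dimensional/condition-number factors are only $\sqrt n$ and $1/\sqrt\kappa$ (not worse), so that they land inside a logarithm rather than multiplying $n/(1-\re\lambda_2)$. This is the same bridge used implicitly in Lemma \ref{lem:mixt-singval} and in the proof of Lemma \ref{lem:mod_lambda_bound}, so I expect it to go through, but it is where the constant $20$ and the precise form $n+\ln(1/(\kappa\epsilon))$ are pinned down. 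A secondary point is confirming that $\tfrac12$-laziness of $R$ transfers to a usable statement about the nontrivial eigenvalues of $A$ — this is immediate since $A$ and $R$ are similar and laziness is a spectral statement ($R = \tfrac12 I + \tfrac12 R'$ with $R'$ nonnegative forces $\re\lambda \ge$ something, hence $|\lambda| \le \alpha$ as above), but it should be stated explicitly. The full details are deferred to the appendix.
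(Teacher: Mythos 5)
Your proposal follows essentially the same route as the paper's own proof: the lower bound is the identical two-line chain of Lemma \ref{lem:mixt-phi-rel} with the Fiedler-type upper bound on $\phi$, and the upper bound passes to the conjugated matrix $A$ of Lemma \ref{lem:transRtoA}, controls $\|B^\tau\|_2$ via the Schur form and Lemma \ref{lem:T_norm_1_bound}, converts laziness into a magnitude bound on nontrivial eigenvalues, and absorbs the $\sqrt{n/\kappa}$ conversion inside the logarithm. Your variant of the laziness step (deriving $|\lambda_i|\le\tfrac12(1+\re\lambda_i)$, hence $1-|\lambda_i|\ge\tfrac12(1-\re\lambda_2)$, rather than the paper's $1-|\lambda_m|\ge\tfrac14(1-\re\lambda_2)$) is a cosmetic simplification that still lands comfortably under the constant $20$, so the argument is correct as proposed.
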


\begin{proof}
The proof is given in Appendix \ref{sec:Proof-of-mixt-lambda2} 
\end{proof}
We remark that there is only \emph{additive} and not multiplicative
dependence on $\ln\left(\frac{n}{\kappa\cdot\epsilon}\right)$. Further,
our construction for the upper bound in Theorem \ref{thm:main} also
shows that the upper bound on $\tau$ using $\re\lambda_{2}$ in Lemma
\ref{lem:mixt-lambda2} is also (almost) tight. For the construction
of $A_{n}$ in Theorem \ref{thm:main}, letting the columns of $U_{n}$
be $u_{1},\ldots,u_{n}$, for $x=u_{2}$, $(A_{n}^{k}-J)u_{2}=(1-(2+\sqrt{n})^{-1})^{k}u_{3}$,
and so for $k=O(\sqrt{n})$, the triangular block of $A^{O(\sqrt{n})}$
has norm about $1/e$, which further becomes less than $\epsilon$
after about $\ln\left(\frac{n}{\epsilon}\right)$ powers. Thus for
the matrices $A_{n}$, $\tau_{\epsilon}(A_{n})\in O\left(\sqrt{n}\cdot\ln\left(\frac{n}{\epsilon}\right)\right)$.
This shows Lemma \ref{lem:mixt-lambda2} is also (almost) tight since
$\lambda_{2}(A_{n})=0$.

\subsection{Mixing time of a nonnegative matrix and its additive symmetrization}

We can also bound the mixing time of a nonnegative matrix $A$ with
the same left and right eigenvector $w$ for PF eigenvalue 1, with
the mixing time of its \emph{additive symmetrization} $M=\frac{1}{2}(A+A^{T})$.
Note that we obtained a similar bound on the spectral gaps of $A$
and $M$ in Lemma \ref{lem:bound-eig-A-and_AplusAT}. Since $\phi(A)=\phi(M)$,
we can bound $\tau_{\epsilon}(A)$ and $\tau_{\epsilon}(M)$ using
the two sided bounds between edge expansion and mixing time in Lemma
\ref{lem:mixt-phi-rel}. For the lower bound, we get $\gamma_{1}\cdot\sqrt{\tau_{\epsilon}(M)}\leq\tau_{\epsilon}(A)$,
and for the upper bound, we get 
\[
\tau_{\epsilon}(A)\leq\gamma_{2}\cdot\tau_{\epsilon}^{2}(M),
\]
where $\gamma_{1}$ and $\gamma_{2}$ are some functions polylogarithmic
in $n,\kappa,\frac{1}{\epsilon}$. However, by bounding the appropriate
operator, we can show a tighter upper bound on $\tau_{\epsilon}(A)$,
with only a \emph{linear} instead of quadratic dependence on $\tau_{\epsilon}(M)$. 
\begin{lem}
\label{lem:tau-A-AAT}Let $A$ be a $\frac{1}{2}$-lazy nonnegative
matrix with positive left and right eigenvector $w$ for PF eigenvalue
1, let $M=\frac{1}{2}(A+A^{T})$, and $\kappa=\min_{i}w_{i}^{2}$.
Then 
\[
\frac{1-2\epsilon}{4\cdot\ln^{\frac{1}{2}}\left(\frac{n}{\kappa\cdot\epsilon}\right)}\cdot\tau_{\epsilon}^{\frac{1}{2}}\left(M\right)\ \leq\ \tau_{\epsilon}(A)\ \leq\ \frac{2\cdot\ln\left(\frac{n}{\kappa\cdot\epsilon}\right)}{\ln\left(\frac{1}{\epsilon}\right)}\cdot\tau_{\epsilon}(M).
\]
\end{lem}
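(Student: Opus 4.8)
The plan is to relate the mixing time of $A$ and of $M=\frac{1}{2}(A+A^T)$ by going through the edge expansion $\phi$, which is the one quantity that is manifestly unchanged by additive symmetrization (since only the symmetric part of $A$ enters $\langle\mathbf 1_S,D_wAD_w\mathbf 1_{\overline S}\rangle$ after we account for the Eulerian property). First I would record $\phi(A)=\phi(M)$. For the lower bound on $\tau_\epsilon(A)$, I apply the two-sided bounds of Lemma \ref{lem:mixt-phi-rel}: from $\tau_\epsilon(M)\le 4\ln(n/(\kappa\epsilon))/\phi^2(M)$ we get $\phi(M)\le 2\ln^{1/2}(n/(\kappa\epsilon))/\tau_\epsilon^{1/2}(M)$, and combining with $\phi(A)=\phi(M)$ and the lower bound $\tau_\epsilon(A)\ge(\tfrac12-\epsilon)/\phi(A)$ yields exactly $\tau_\epsilon(A)\ge\frac{1-2\epsilon}{4\ln^{1/2}(n/(\kappa\epsilon))}\,\tau_\epsilon^{1/2}(M)$. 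This direction is essentially bookkeeping.

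The substantive part is the upper bound, where the naive route through $\phi$ would only give $\tau_\epsilon(A)\le\gamma_2\,\tau_\epsilon^2(M)$ (quadratic loss), and the claim is a linear bound. The idea is to bound the relevant operator for $A$ directly in terms of the one for $M$. Pass to $W=D_w^{1/2}D_w^{1/2}=D_w$ — actually here $u=v=w$, so the transformation of Lemma \ref{lem:transRtoA} is trivial and $A$ already has $\|A\|_2=1$ with $Aw=A^Tw=w$; likewise $\|M\|_2\le1$. Write $\widehat A=A-wu^T$ (projecting off the stationary direction, in the $w$-weighted inner product where $\langle w,w\rangle=1$), and similarly $\widehat M=M-ww^T$. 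The point is $\widehat M=\frac12(\widehat A+\widehat A^*)$ is the Hermitian part of $\widehat A$ on the orthogonal complement of $w$, where $\|\widehat A\|_2\le1$. I want to show $\|\widehat A^k\|_2$ is small once $\|\widehat M^j\|_2$ is small for a comparable $j$. The key inequality I would aim for is of the form: if $\|\widehat M\|_2\le1-\delta$ then $\|\widehat A^k\|_2\le e^{-\delta k/2}$ or similar — i.e., the spectral gap of the symmetrization controls the norm-decay of powers of $A$ even though $A$ is non-normal. One clean way: for any unit $x\perp w$, $\|\widehat A x\|_2^2 = \langle x,\widehat A^*\widehat A x\rangle$, and $\widehat A^*\widehat A \preceq I$, while $\re\langle x,\widehat A x\rangle=\langle x,\widehat M x\rangle\le\|\widehat M\|_2$; then $\|\widehat A x\|_2^2\le 1 - (1-\|\widehat M\|_2)\cdot(\text{something})$... this is the step that needs care, since in general $\|\widehat A x\|_2$ can equal $1$ even when $\langle x,\widehat M x\rangle<1$ (e.g. a rotation). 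So a one-step contraction is false; the honest statement must be about $k$-fold products and is exactly the phenomenon already exploited in Lemma \ref{lem:mixt-lambda2}'s proof via the Schur-form / Lemma \ref{lem:T_norm_1_bound} machinery.

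Concretely, I would mimic the proof of Lemma \ref{lem:mixt-lambda2}: take the Schur decomposition of $\widehat A$ restricted to $w^\perp$, $\widehat A = UTU^*$ with $T$ upper-triangular, $\|T\|_2\le1$, and diagonal entries the nontrivial eigenvalues $\lambda_i(A)$; by Lemma \ref{lem:bound-eig-A-and_AplusAT} (the inequality $1-\lambda_2(M)\le 1-\re\lambda_2(A)$, i.e. $\re\lambda_2(A)\le\lambda_2(M)$, which is the elementary direction coming from Courant–Fischer), $|T_{ii}|$-controlled quantities... actually I want $\re\lambda_i(A)\le\lambda_2(M)$ for all nontrivial $i$, which follows the same way. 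Then Lemma \ref{lem:T_norm_1_bound} gives $\|T^k\|_2\le\epsilon$ for $k\ge(4n+2\ln(n/\epsilon))/(1-\lambda_2(M))$, hence $\tau_\epsilon(A)\lesssim (n+\ln(n/(\kappa\epsilon)))/(1-\lambda_2(M))$. On the other side, for the reversible matrix $M$ one has the matching lower bound $\tau_\epsilon(M)\gtrsim \ln(1/\epsilon)/(1-\lambda_2(M))$ (standard: test against the second eigenvector), i.e. $1-\lambda_2(M)\gtrsim \ln(1/\epsilon)/\tau_\epsilon(M)$ — wait, that is the wrong direction; I actually need $1/(1-\lambda_2(M))\le \tau_\epsilon(M)/\ln(1/\epsilon)\cdot(\text{const})$, which is $\tau_\epsilon(M)\ge \ln(1/\epsilon)/(1-\lambda_2(M))$ rearranged, and that inequality $\tau_\epsilon(M)\ge c\ln(1/\epsilon)/(1-\lambda_2(M))$ is indeed the standard eigenvalue lower bound on mixing time. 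Substituting, $\tau_\epsilon(A)\le \frac{C(n+\ln(n/(\kappa\epsilon)))}{\ln(1/\epsilon)}\tau_\epsilon(M)$, and absorbing $n+\ln(n/(\kappa\epsilon))$ against $\ln(n/(\kappa\epsilon))$ with the stated constant $2$ gives the claimed $\tau_\epsilon(A)\le \frac{2\ln(n/(\kappa\epsilon))}{\ln(1/\epsilon)}\tau_\epsilon(M)$. The main obstacle is getting the constants to land exactly at $2$ and making sure the $n$-term is genuinely dominated — this likely requires running the $\frac12$-lazy version (so $1-\lambda_2(M)\ge\frac12\phi^2(M)\cdot\text{stuff}$ is not needed, but laziness lets one replace eigenvalue bounds by singular-value bounds via Lemma \ref{lem:mixt-singval} with $c$ chosen as $\Theta(n+\ln(\cdot))$), and carefully choosing that exponent $c$ so that $\sigma_2^c(A)$ and $\lambda_2^c(M)$-type quantities match up; that constant-chasing is where the real work sits.
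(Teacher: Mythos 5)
Your lower bound argument is correct and is exactly the paper's: apply the two sides of Lemma \ref{lem:mixt-phi-rel} to $M$ and $A$ respectively, and link them by $\phi(A)=\phi(M)$.

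For the upper bound, however, there is a genuine gap, and you essentially spot it yourself but do not resolve it. Your main route goes through the Schur form and Lemma \ref{lem:T_norm_1_bound}, which necessarily produces an additive $\Theta(n)$ term in the numerator: that lemma needs $k\gtrsim n/(1-\alpha)$ before $\|T^k\|_2$ can be made small, because a nilpotent upper-triangular block of size $n$ genuinely requires $\sim n$ powers before it decays. That $n$ cannot be ``absorbed against $\ln(n/(\kappa\epsilon))$'' with any constant; the Schur route provably gives $\tau_\epsilon(A)\lesssim\bigl(n+\ln(n/(\kappa\epsilon))\bigr)/(1-\lambda_2(M))$, which is exponentially weaker than the claimed $\ln(n/(\kappa\epsilon))$-type bound. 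Your closing sentence gestures toward singular values but proposes to set the exponent $c$ in Lemma \ref{lem:mixt-singval} to $\Theta(n+\ln(\cdot))$, which again reintroduces the $n$. The correct move, which the paper uses, avoids the Schur form entirely: because $A$ is $\tfrac12$-lazy with $Aw=A^Tw=w$, one has $\|2A-I\|_2=1$ and hence
\[
AA^{T}=\tfrac12(A+A^{T})+\tfrac14(2A-I)(2A^{T}-I)-\tfrac14 I\ \preceq\ M\ \ \text{on }w^\perp,
\]
so $\sigma_2^2(A)=\lambda_2(AA^T)\le\lambda_2(M)$. Feeding this into the singular-value mixing estimate (the first display in the proof of Lemma \ref{lem:mixt-singval}) gives directly
\[
\tau_\epsilon(A)\ \le\ \frac{\ln\!\left(\frac{\sqrt{n}}{\sqrt{\kappa}\,\epsilon}\right)}{\ln\!\left(\frac{1}{\sigma_2(A)}\right)}\ \le\ \frac{2\ln\!\left(\frac{\sqrt{n}}{\sqrt{\kappa}\,\epsilon}\right)}{\ln\!\left(\frac{1}{\lambda_2(M)}\right)},
\]
with no $n$ term at all. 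The other half of the argument is that $M$ is positive semidefinite (again by $\tfrac12$-laziness), so testing against the second eigenvector of $M$ gives the sharp lower bound $\tau_\epsilon(M)\ge\ln(1/\epsilon)/\ln(1/\lambda_2(M))$; chaining the two displays yields the stated upper bound. This is the missing idea: pass from $A$ to $M$ through $\sigma_2^2(A)\le\lambda_2(M)$, not through the nontrivial eigenvalues of $A$.
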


\begin{proof}
The proof is given in Appendix \ref{sec:Proof-of-mixt-AAT}. 
\end{proof}
One example application of Lemma \ref{lem:tau-A-AAT} is the following:
given any undirected graph $G$ such that each vertex has degree $d$,
\emph{any} manner of orienting the edges of $G$ to obtain a graph
in which every vertex has in-degree and out-degree $d/2$ cannot increase
the mixing time of a random walk (up to a factor of $\ln\left(\frac{n}{\kappa\cdot\epsilon}\right)$).

\subsection{Mixing time of the continuous operator}

Let $R$ be a nonnegative matrix with PF eigenvalue 1 and associated
positive left and right eigenvectors $u$ and $v$. The continuous
time operator associated with $R$ is defined as $\exp\left(t\cdot\left(R-I\right)\right)$,
where for any matrix $M$, we formally define $\exp(M)=\sum_{i=0}\frac{1}{i!}M^{i}$.
The reason this operator is considered continuous, is that starting
with any vector $x_{0}$, the vector $x_{t}$ at time $t\in\mathbb{R}_{\geq0}$
is defined as $x_{t}=\exp\left(t\cdot\left(R-I\right)\right)x_{0}$.
Since 
\[
\exp\left(t\cdot\left(R-I\right)\right)=\exp(t\cdot R)\cdot\exp(-t\cdot I)=e^{-t}\sum_{i=0}^{\infty}\frac{1}{i!}t^{i}R^{i}
\]
where we split the operator into two terms since $R$ and $I$ commute,
it follows that $\exp\left(t\cdot\left(R-I\right)\right)$ is nonnegative,
and if $\lambda$ is any eigenvalue of $R$ for eigenvector $y$,
then $e^{t(\lambda-1)}$ is an eigenvalue of $\exp\left(t\cdot\left(R-I\right)\right)$
for the same eigenvector $y$. Thus, it further follows that $u$
and $v$ are the left and right eigenvectors for $\exp\left(t\cdot\left(R-I\right)\right)$
with PF eigenvalue 1. The mixing time of $\exp\left(t\cdot\left(R-I\right)\right)$,
is the value of $t$ for which 
\[
\left\Vert D_{u}\left(\exp\left(t\cdot\left(R-I\right)\right)-v\cdot u^{T}\right)v_{0}\right\Vert _{1}\leq\epsilon
\]
for every $v_{0}$ such that $\|D_{u}v_{0}\|_{1}=1$, and thus, it
is exactly same as considering the mixing time of $\exp(R-I)$ in
the sense of Definition \ref{def:gen-Mixing-time}. 
\begin{lem}
\label{lem:mixt-contchain}Let $R$ be a nonnegative matrix (not necessarily
lazy) with positive left and right eigenvectors $u$ and $v$ for
PF eigenvalue 1, normalized so that $\langle u,v\rangle=1$ and let
$\kappa=\min_{i}u_{i}\cdot v_{i}$. Then the mixing time of $\exp(t\cdot(R-I))$,
or $\tau_{\epsilon}\left(\exp(R-I)\right)$ is bounded as 
\[
\frac{\frac{1}{2}-\epsilon}{\phi(R)}\ \leq\ \tau_{\epsilon}\left(\exp(R-I)\right)\ \leq\ \frac{100\cdot\ln\left(\frac{n}{\kappa\cdot\epsilon}\right)}{\phi^{2}(R)}.
\]
\end{lem}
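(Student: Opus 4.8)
The plan is to reduce the continuous-time bound to the discrete-time bounds already available, by comparing $\exp(R-I)$ with a lazy version of $R$. For the lower bound, I would argue exactly as in the discrete case: if $\tau = \tau_\epsilon(\exp(R-I))$, then applying the operator to an extremal cut vector $\mathbf{1}_S$ (with $S$ achieving $\phi(R)$, weighted by $u,v$) and tracking how much mass crosses the cut in one ``unit of time'' gives a one-step change of at most $\phi(R)$ in the relevant $\ell_1$ quantity; since $\exp(R-I) = e^{-1}\sum_i \frac{1}{i!}R^i$ is a genuine nonnegative operator fixing $u$ and $v$, the same telescoping/flow argument underlying the lower bound in Lemma \ref{lem:mixt-phi-rel} applies verbatim with ``$\tau$ steps'' replaced by ``$\tau$ units of continuous time''. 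This yields $\frac{\frac12-\epsilon}{\phi(R)} \leq \tau_\epsilon(\exp(R-I))$.

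For the upper bound, the key observation is that $\exp(t(R-I))$ is, up to rescaling time, a composition of matrices that look like $\frac12 I + \frac12 R'$ for a suitable lazy nonnegative matrix, so I can invoke Lemma \ref{lem:mixt-phi-rel} applied to such a lazy matrix and then account for the continuous-time reparametrization. Concretely, I would write $\exp(t(R-I))$ and compare with powers of $L := \frac12 I + \frac12 R$ (which is $\frac12$-lazy, has the same $u,v$, and has $\phi(L) = \frac12\phi(R)$). One clean way: note $\exp(t(R-I)) = \exp\big(2t(L - I)\big)$, and that for any $\frac12$-lazy nonnegative $L$ the operator $\exp(s(L-I)) = e^{-s}\sum_k \frac{s^k}{k!}L^k$ is an average of powers $L^k$ with $k$ Poisson-distributed with mean $s$; by concentration of the Poisson distribution, for $s$ a suitable multiple of $\tau_\epsilon(L)$ the ``effective power'' is at least $\tau_{\epsilon/2}(L)$ with probability $\geq 1-\epsilon/2$, and since each $L^k$ is a (sub)stochastic-type operator in the relevant weighted norm, the triangle inequality gives $\|D_u(\exp(s(L-I)) - vu^T)x\|_1 \leq \epsilon$. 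Then $\tau_\epsilon(\exp(R-I)) \leq \frac12 s = O(\tau_\epsilon(L))$, and plugging the discrete upper bound $\tau_{\epsilon}(L) \leq \frac{4\ln(n/(\kappa\epsilon))}{\phi^2(L)} = \frac{16\ln(n/(\kappa\epsilon))}{\phi^2(R)}$ from Lemma \ref{lem:mixt-phi-rel} gives the claimed $\frac{100\ln(n/(\kappa\epsilon))}{\phi^2(R)}$ after absorbing constants.

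The main obstacle will be making the Poisson-concentration / triangle-inequality step rigorous in the correct norm. Specifically, I need that each $L^k$ contracts $\|D_u(\,\cdot\, - vu^T)x\|_1$ monotonically (or at least does not expand it), so that replacing the Poisson-weighted average by its ``typical'' term is legitimate; this monotonicity holds because $L$ maps the $\|D_u\,\cdot\,\|_1$-unit ball of mean-zero vectors into itself (it is a weighted stochastic operator), which I would verify via the transformation $A = D_u^{1/2}D_v^{-1/2}RD_u^{-1/2}D_v^{1/2}$ of Lemma \ref{lem:transRtoA} so that everything becomes an honest (lazy) Markov-type contraction on $\ell_2$/$\ell_1$. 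The bookkeeping of constants (the factor $2$ from $s = 2t$, the factor $2$ in $\phi(L) = \phi(R)/2$, the $\epsilon/2$ split, and the Poisson tail) is routine but must be done carefully to land inside the constant $100$; I would carry it out in the appendix as indicated.
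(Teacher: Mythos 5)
Your proposal is correct in spirit but takes a genuinely different route from the paper, most notably on the upper bound. The paper works with $\exp\bigl(\tfrac{R-I}{2}\bigr) = e^{-1/2}\bigl(I + \sum_{i\geq 1}\tfrac{1}{i!\,2^i}R^i\bigr)$, observes it is automatically $\tfrac12$-lazy (since $e^{-1/2} \geq \tfrac12$) with the same $u,v$, and then directly pins down its edge expansion on both sides using the submultiplicativity $\phi(R^i)\leq i\cdot\phi(R)$ from Lemma \ref{lem:pow_bound}: summing the series gives $\phi\bigl(\exp(\tfrac{R-I}{2})\bigr) \leq \tfrac12\phi(R)$, while keeping only the $i=1$ term gives $\phi\bigl(\exp(\tfrac{R-I}{2})\bigr) \geq e^{-1/2}\cdot\tfrac12\cdot\phi(R) \geq \tfrac{3}{10}\phi(R)$. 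Both the lower and upper bounds on $\tau_\epsilon$ then drop out of a single black-box application of Lemma \ref{lem:mixt-phi-rel} to $\exp(\tfrac{R-I}{2})$, with a factor of $2$ for rescaling time. Your route instead compares $\exp(2t(L-I))$ with high powers of $L=\tfrac12 I+\tfrac12 R$ via Poisson concentration. That can be made to work — the monotonicity of $k\mapsto\|D_u(L^k-vu^T)x\|_1$ that you flag as the crux is indeed correct (and follows exactly from the non-expansiveness of $L$ in the $u$-weighted $\ell_1$ norm on $\langle u,\cdot\rangle = 0$ vectors, as you sketch) — but it is more indirect and needs one more hedge you don't mention: when $\tau_{\epsilon/2}(L)$ is $O(1)$, the Poisson mean $s$ is too small for a useful tail bound, so you must take $s = \max\bigl(c_1\tau_{\epsilon/2}(L),\,c_2\ln(1/\epsilon)\bigr)$; this is harmless because $\phi\leq 1$ ensures the target bound already dominates $\ln(1/\epsilon)$, but the case split is needed for rigor. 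Your lower-bound sketch is closer to the paper's, though ``applies verbatim'' understates the point: the discrete argument rests on $\phi(A^\tau)\leq\tau\phi(A)$, and the continuous analogue $\phi\bigl(\exp(t(R-I))\bigr)\leq t\phi(R)$ is not a reparametrization but a separate series estimate (precisely the paper's equation \eqref{eq:mixt-contchain-1}), which your ``one-step change of at most $\phi(R)$'' phrasing gestures at without carrying out. In short: your approach is sound and lands within the constant $100$ after careful bookkeeping, but the paper's route — estimate $\phi$ of the exponential once, two-sidedly, then cite Lemma \ref{lem:mixt-phi-rel} — is cleaner and produces the auxiliary fact $\tfrac{3}{10}\phi(R)\leq\phi\bigl(\exp(\tfrac{R-I}{2})\bigr)\leq\tfrac12\phi(R)$ as a reusable byproduct, which your Poisson argument does not.
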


\begin{proof}
The proof is given in Appendix \ref{sec:Proof-of-mixt-contchain}. 
\end{proof}

\subsection{Bounds using the canonical paths method}

For the case of symmetric nonnegative matrices $M$ with PF eigenvalue
1, as stated earlier in this section in equation \ref{eq:sym-mixt},
since $\tau$ varies inversely with $1-\lambda_{2}$ (up to a loss
of a factor of $\ln(\frac{n}{\kappa\cdot\epsilon})$), it follows
that any lower bound on the spectral gap can be used to upper bound
$\tau_{\epsilon}(M)$. Further, since $1-\lambda_{2}$ can be written
as a minimization problem for symmetric matrices (see Section \ref{sec:Preliminaries}),
any relaxation of the optimization problem can be used to obtain a
lower bound on $1-\lambda_{2}$, and inequalities obtained thus are
referred to as \emph{Poincare inequalities}. One such method is to
use \emph{canonical paths }\cite{sinclair1992improved} in the underlying
weighted graph, which helps to bound mixing time in certain cases
in which computing $\lambda_{2}$ or $\phi$ is infeasible. However,
since it is possible to define canonical paths in many different ways,
it leads to multiple relaxations to bound $1-\lambda_{2}$, each useful
in a different context. We remark one particular definition and lemma
here, since it is relevant to our construction in Theorem \ref{thm:constr_main},
after suitably modifying it for the doubly stochastic case. 
\begin{lem}
\cite{sinclair1992improved}\label{lem:canpath-sinc} Let $M$ represent
a symmetric doubly stochastic matrix. Let $W$ be a set of paths in
$M$, one between every pair of vertices. For any path $\gamma_{u,v}\in S$
between vertices $(u,v)$ where $\gamma_{u,v}$ is simply a set of
edges between $u$ and $v$, let the number of edges or the (unweighted)
length of the path be $|\gamma_{u,v}|$. Let 
\[
\rho_{W}(M)=\max_{e=(x,y)}\dfrac{\sum\limits _{(u,v):e\in\gamma_{u,v}}|\gamma_{u,v}|}{n\cdot M_{x,y}}.
\]
Then for any $W$, 
\[
1-\lambda_{2}(M)\geq\frac{1}{\rho_{W}(M)}
\]
and thus, 
\[
\tau_{\epsilon}(M)\leq\rho_{W}(M)\cdot\ln\left(\frac{n}{\epsilon}\right).
\]
\end{lem}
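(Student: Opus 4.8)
The plan is to prove this as a Poincar\'e (Dirichlet-form) inequality in the style of \cite{sinclair1992improved}, specialized to the uniform stationary distribution, and then read off the mixing bound from the spectral estimate in \eqref{eq:sym-mixt}. The key point is that the canonical paths $W$ are used only to lower bound the spectral gap; everything downstream is the already-stated relationship between $1-\lambda_2$ and mixing.

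First I would set up the variational picture. Since $M$ is symmetric and doubly stochastic, for every real $f$ the row/column sums being $1$ force the Dirichlet identity $\langle f,(I-M)f\rangle=\tfrac12\sum_{x,y}M_{x,y}(f(x)-f(y))^2$, and for every $f\perp\mathbf 1$ one has the ``variance'' identity $\|f\|_2^2=\tfrac1{2n}\sum_{u,v}(f(u)-f(v))^2$ (sum over ordered pairs). By the Courant--Fischer characterization recalled in Section~\ref{sec:Preliminaries}, with top eigenvector $\mathbf 1$, $1-\lambda_2(M)=\min_{0\ne f\perp\mathbf 1}\langle f,(I-M)f\rangle/\|f\|_2^2$, so it suffices to show $\langle f,(I-M)f\rangle\ge \rho_W(M)^{-1}\|f\|_2^2$ for every mean-zero $f$.

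Next comes the canonical-paths Cauchy--Schwarz step, which is the heart of the argument. Fix a mean-zero $f$; for each ordered pair $(u,v)$ orient $\gamma_{u,v}$ from $u$ to $v$ and telescope along it, $f(u)-f(v)=\sum_{e=(x,y)\in\gamma_{u,v}}(f(x)-f(y))$. The crude Cauchy--Schwarz (all weights $1$) gives $(f(u)-f(v))^2\le|\gamma_{u,v}|\sum_{e=(x,y)\in\gamma_{u,v}}(f(x)-f(y))^2$. Summing over all ordered pairs $(u,v)$ and exchanging the order of summation, each directed edge $e=(x,y)$ acquires the coefficient $\sum_{(u,v):e\in\gamma_{u,v}}|\gamma_{u,v}|$, which by the definition of $\rho_W(M)$ is at most $\rho_W(M)\cdot n\cdot M_{x,y}$; extending the remaining sum to all ordered pairs $(x,y)$ only adds nonnegative terms. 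Hence
\[
2n\|f\|_2^2=\sum_{u,v}(f(u)-f(v))^2\ \le\ \rho_W(M)\,n\sum_{x,y}M_{x,y}(f(x)-f(y))^2\ =\ 2\rho_W(M)\,n\,\langle f,(I-M)f\rangle,
\]
which rearranges to $\langle f,(I-M)f\rangle\ge\rho_W(M)^{-1}\|f\|_2^2$; minimizing over $f\perp\mathbf 1$ gives $1-\lambda_2(M)\ge 1/\rho_W(M)$. The mixing-time consequence is then immediate: since the chain is $\tfrac12$-lazy, $M$ is positive semidefinite, so the standard $\ell_2\!\to\!\ell_1$ estimate $\|(M^\tau-J)\mathbf 1_{\{i\}}\|_1\le\sqrt n\,\lambda_2(M)^\tau\le\sqrt n\,e^{-(1-\lambda_2(M))\tau}$ underlying \eqref{eq:sym-mixt} yields $\tau_\epsilon(M)\le\ln(n/\epsilon)/(1-\lambda_2(M))$, and combining with the eigenvalue bound gives $\tau_\epsilon(M)\le\rho_W(M)\ln(n/\epsilon)$.

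I do not expect a conceptual obstacle here; this is a classical Poincar\'e inequality. The only thing requiring care is the bookkeeping in the Cauchy--Schwarz step: orienting each $\gamma_{u,v}$ consistently so the telescoping sum is valid, passing cleanly between directed and undirected edges using $M_{x,y}=M_{y,x}$, and tracking the factors of $2$ and $n$ so that the crude Cauchy--Schwarz constant $|\gamma_{u,v}|$ lands exactly in the numerator of $\rho_W(M)$. One should also note that the rewrite $\|f\|_2^2=\tfrac1{2n}\sum_{u,v}(f(u)-f(v))^2$ uses $f\perp\mathbf 1$, which is precisely the constraint under which Courant--Fischer is applied, so both sides of the variational quotient are being compared on the same subspace.
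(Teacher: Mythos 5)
Your proof is correct, and it is precisely the canonical-paths Poincar\'e inequality argument from Sinclair that the lemma cites; the paper offers no independent proof, so there is nothing to diverge from. The bookkeeping all checks out: the Dirichlet identity $\langle f,(I-M)f\rangle=\tfrac12\sum_{x,y}M_{x,y}(f(x)-f(y))^2$ uses double stochasticity, the variance identity $\|f\|_2^2=\tfrac1{2n}\sum_{u,v}(f(u)-f(v))^2$ uses $f\perp\mathbf 1$, and the factors of $2$ and $n$ coming from summing over ordered pairs cancel exactly so that $|\gamma_{u,v}|$ lands in the numerator of $\rho_W(M)$. The one thing worth making explicit is that your final step uses positive semidefiniteness of $M$ (so that $\lambda_2(M)$, rather than $\max(|\lambda_2|,|\lambda_n|)$, controls the contraction on $\mathbf 1^\perp$), which you obtain from $\tfrac12$-laziness; the lemma statement does not assert this, but it is implicit since the paper's Definition~\ref{def:gen-Mixing-time} only defines $\tau_\epsilon$ for $\tfrac12$-lazy matrices, so your appeal to it is appropriate.
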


\begin{cor}
\label{cor:canpath-sinc}Combining Lemma \ref{lem:sigma-bound-phi}
and Lemma \ref{lem:canpath-sinc}, it follows that for any doubly
stochastic matrix $A$, and any set $W$ of paths in the underlying
graph of $AA^{T}$, 
\[
\tau_{\epsilon}(A)\leq\dfrac{2\cdot\ln\left(\frac{n}{\epsilon}\right)}{1-\sigma_{2}^{2}(A)}=\dfrac{2\cdot\ln\left(\frac{n}{\epsilon}\right)}{1-\lambda_{2}(AA^{T})}\leq2\cdot\rho_{W}(AA^{T})\cdot\ln\left(\frac{n}{\epsilon}\right).
\]
\end{cor}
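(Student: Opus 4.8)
The final statement to prove is Corollary \ref{cor:canpath-sinc}, which chains together three already-established facts. Let me sketch the plan.

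\textbf{Proof proposal.} The plan is to simply compose the three inequalities already available, reading them in the right order. First I would invoke Lemma \ref{lem:sigma-bound-phi}, which gives $\frac{1-\sigma_2(A)}{2}\le\phi(A)$ for a nonnegative matrix $A$ with PF eigenvalue $1$ and common left/right eigenvector $w$; since a doubly stochastic matrix has $A\mathbf{1}=A^T\mathbf{1}=\mathbf{1}$, this hypothesis is met with $w=\frac{1}{\sqrt n}\mathbf{1}$. Squaring (and using $\sigma_2\le 1$, so $1-\sigma_2^2=(1-\sigma_2)(1+\sigma_2)\le 2(1-\sigma_2)$) I would pass from $1-\sigma_2(A)$ to $1-\sigma_2^2(A)$, which is the quantity that feeds the mixing-time bound. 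Concretely, from $1-\sigma_2(A)\le 2\phi(A)$ one gets $1-\sigma_2^2(A)\le 2(1-\sigma_2(A))\le 4\phi(A)$, though actually for this corollary we only need the chain in the stated direction, so I would keep $\frac{1-\sigma_2(A)}{2}\le\phi(A)$ in reserve and instead directly use the singular-value form of the mixing bound.

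Second, I would apply Lemma \ref{lem:mixt-singval} with $c=2$ to the doubly stochastic matrix $A$ (which is nonnegative with $w=\frac{1}{\sqrt n}\mathbf 1$, hence $\kappa=\min_i w_i^2=\frac1n$): this yields
\[
\tau_\epsilon(A)\ \le\ \frac{2\cdot\ln\!\left(\frac{n}{\kappa\cdot\epsilon}\right)}{1-\sigma_2^2(A)}\ =\ \frac{2\cdot\ln\!\left(\frac{n^2}{\epsilon}\right)}{1-\sigma_2^2(A)},
\]
and since $\sigma_2^2(A)=\lambda_2(AA^T)$ — because the squared singular values of $A$ are exactly the eigenvalues of $AA^T$, and $AA^T$ is symmetric doubly stochastic with trivial eigenvalue $1$ corresponding to $\mathbf 1$ — this equals $\frac{2\ln(n/\epsilon)}{1-\lambda_2(AA^T)}$ up to absorbing the constant in the logarithm (the paper writes $\ln(n/\epsilon)$, treating the $\ln n^2 = 2\ln n$ slack as harmless, or one simply reads $\ln(n/\epsilon)$ as shorthand).

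Third, and finally, I would note that $AA^T$ is itself a symmetric doubly stochastic matrix, so Lemma \ref{lem:canpath-sinc} applies to it verbatim: for any set $W$ of canonical paths in the underlying graph of $AA^T$, $1-\lambda_2(AA^T)\ge 1/\rho_W(AA^T)$, equivalently $\frac{1}{1-\lambda_2(AA^T)}\le\rho_W(AA^T)$. Substituting this into the bound from the previous step gives $\tau_\epsilon(A)\le 2\rho_W(AA^T)\ln(n/\epsilon)$, which is the claimed chain of inequalities.

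\textbf{Anticipated obstacle.} There is essentially no hard step here — it is a bookkeeping composition — but the one point requiring a sentence of care is the identity $\sigma_2^2(A)=\lambda_2(AA^T)$ together with checking that $AA^T$ legitimately falls under Lemma \ref{lem:canpath-sinc}: one must observe that $AA^T$ is symmetric, nonnegative, and doubly stochastic (row sums $AA^T\mathbf 1 = A(A^T\mathbf 1)=A\mathbf 1=\mathbf 1$), so its largest eigenvalue is $1$ with eigenvector $\mathbf 1$ and its second-largest eigenvalue is indeed $\sigma_2^2(A)$; the matching of "second" across the two formulations (eigenvalue of $AA^T$ versus singular value of $A$) is what makes the substitution valid. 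The only other thing to watch is consistency of the logarithmic factor between Lemma \ref{lem:mixt-singval} (where it is $\ln(n/(\kappa\epsilon)) = \ln(n^2/\epsilon)$ after $\kappa=1/n$) and the corollary statement (where it is written $\ln(n/\epsilon)$); this is a constant-factor discrepancy inside the log that is absorbed harmlessly, and I would remark on it rather than belabor it.
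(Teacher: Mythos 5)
Your proof is correct and takes the only route available: invoke Lemma~\ref{lem:mixt-singval} with $c=2$ and $\kappa=1/n$, identify $\sigma_2^2(A)=\lambda_2(AA^T)$, check that $AA^T$ is symmetric doubly stochastic, and then apply Lemma~\ref{lem:canpath-sinc} to $AA^T$.

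Two small remarks. First, the $\ln(n^2/\epsilon)$ versus $\ln(n/\epsilon)$ discrepancy you flag as needing care is in fact not there: Lemma~\ref{lem:mixt-singval} states two upper bounds, and the tighter one is
\[
\tau_\epsilon(A)\ \le\ \frac{c\cdot\ln\!\left(\frac{\sqrt{n}}{\sqrt{\kappa}\cdot\epsilon}\right)}{1-\sigma_2^c(A)},
\]
which with $\kappa=1/n$ gives $\sqrt{n}/\sqrt{\kappa}=n$ exactly, hence $\tau_\epsilon(A)\le\frac{2\ln(n/\epsilon)}{1-\sigma_2^2(A)}$ with no constant to absorb; you reached for the looser second form and then had to hand-wave, which was unnecessary. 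Second, you are right to observe that Lemma~\ref{lem:sigma-bound-phi}, cited in the corollary's header, is not what actually carries the argument---the load-bearing ingredient is Lemma~\ref{lem:mixt-singval}, and your proposal correctly uses the latter.
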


Consider the example $A_{n}$ in Theorem \ref{thm:constr_main}. It
is not difficult to see that 
\begin{equation}
\tau_{\epsilon}(A_{n})\in O\left(\sqrt{n}\cdot\ln\left(\frac{n}{\epsilon}\right)\right).\label{eq:para_mixt}
\end{equation}
This follows since the factor of $\sqrt{n}$ ensures that the only
non zero entries in the triangular matrix $T_{n}$ (see Appendix \ref{sec:Proof-of-construction})
in the Schur form of $A^{\lceil\sqrt{n}\rceil}$ are about $e^{-1}$,
and the factor of $\ln\left(\frac{n}{\epsilon}\right)$ further converts
these entries to have magnitude at most $\frac{\epsilon}{n}$ in $A^{\tau}$.
Thus, the operator norm becomes about $\frac{\epsilon}{n}$, and the
$\ell_{1}$ norm gets upper bounded by $\epsilon$. However, from
Theorem \ref{thm:constr_main}, since $\phi(A_{n})\geq\frac{1}{6\sqrt{n}},$
it follows from Lemma \ref{lem:mixt-phi-rel} that $\tau_{\epsilon}(A_{n})\in O\left(n\cdot\ln\left(\frac{n}{\epsilon}\right)\right)$,
about a quadratic factor off from the actual upper bound in equation
\ref{eq:para_mixt}. Further, from Theorem \ref{thm:constr_main},
the second eigenvalue of $A_{n}$ is $0$, and even employing Lemma
\ref{lem:mixt-lambda2} leads to a quadratic factor loss from the
actual bound. However, Lemma \ref{lem:mixt-singval} and Corollary
\ref{cor:canpath-sinc} do give correct bounds. Since $\sigma_{2}(A_{n})=1-\frac{1}{\sqrt{n}+2}$
from Theorem \ref{thm:constr_main} (see the proof in Appendix \ref{sec:Proof-of-construction}),
it follows from Lemma \ref{lem:mixt-singval} for $c=1$ that $\tau_{\epsilon}(A_{n})\in O\left(\sqrt{n}\cdot\ln\left(\frac{n}{\epsilon}\right)\right)$,
matching the bound in equation \ref{eq:para_mixt}. Now to see the
bound given by canonical paths and corollary \ref{cor:canpath-sinc},
consider the matrix $M=A_{n}A_{n}^{T}$. Every entry of $M$ turns
out to be positive, and the set $W$ is thus chosen so that the path
between any pair of vertices is simply the edge between the vertices.
Further for $r_{n},\alpha_{n},\beta_{n}$ defined in the proof (Appendix
\ref{sec:Proof-of-construction}) of Theorem \ref{thm:constr_main},
$M=J+r_{n}^{2}B$, where 
\[
B_{1,1}=\frac{n-2}{n},\ \ B_{n,n}=(n-2)\cdot\beta_{n}^{2},\ \ B_{i,i}=\alpha_{n}^{2}+(n-3)\cdot\beta_{n}^{2},\ \ B_{1,n}=B_{n,1}=\frac{n-2}{\sqrt{n}}\cdot\beta_{n},
\]
\[
B_{n,j}=B_{j,n}=\alpha_{n}\cdot\beta_{n}+(n-3)\cdot\beta_{n}^{2},\ \ B_{1,j}=B_{j,1}=\frac{1}{\sqrt{n}}\cdot(\alpha_{n}+(n-3)\cdot\beta_{n}),\ \ B_{i,j}=2\cdot\alpha_{n}\cdot\beta_{n}+(n-4)\cdot\beta_{n}^{2},
\]
and $2\leq i,j\leq n-1$. It follows that any entry of the matrix
$M$ is at least $c\cdot n^{-\frac{3}{2}}$ (for some constant $c$),
and from Corollary \ref{cor:canpath-sinc}, we get that $\tau_{\epsilon}(A_{n})\in O\left(\sqrt{n}\cdot\ln\left(\frac{n}{\epsilon}\right)\right)$,
matching the bound in equation \ref{eq:para_mixt}.

\section*{Acknowledgements}

We thank Shanghua Teng, Ori Parzanchevski and Umesh Vazirani for illuminating
conversations. JCM would like to thank Luca Trevisan for his course
``Graph Partitioning and Expanders'' on VentureLabs, which got the
author interested in Spectral Graph Theory. Research was supported
by NSF grants 1319745, 
1553477, 
1618795, and 1909972; 
BSF grant 2012333; and, during a residency of LJS at the Israel Institute
for Advanced Studies, by a EURIAS Senior Fellowship co-funded by the
Marie Sk{\l }odowska-Curie Actions under the 7th Framework Programme.

\pagebreak{}

 \bibliographystyle{alpha}
\bibliography{refs}

\begin{thebibliography}{DSC96}

\bibitem[Alo86]{alon86}
N.~Alon.
\newblock Eigenvalues and expanders.
\newblock {\em Combinatorica}, 6(2):83--96, 1986.

\bibitem[AM85]{AlonM85}
N.~Alon and V.~Milman.
\newblock $\lambda_1$, isoperimetric inequalities for graphs, and
  superconcentrators.
\newblock {\em Journal of Combinatorial Theory, Series B}, 38:73--88, 1985.

\bibitem[BF60]{bauer1960norms}
F.~L. Bauer and C.~T. Fike.
\newblock Norms and exclusion theorems.
\newblock {\em Numerische Mathematik}, 2(1):137--141, 1960.

\bibitem[Bha97]{Bhatia97}
R.~Bhatia.
\newblock {\em Matrix Analysis}.
\newblock Springer, 1997.

\bibitem[BR65]{BlakleyR65}
G.~R. Blakley and P.~Roy.
\newblock A {H}\"older type inequality for symmetric matrices with nonnegative
  entries.
\newblock {\em Proc. American Mathematical Society}, 6(16):1244--1245, 1965.

\bibitem[Bru46]{de1946combinatorial}
N.~G.~de Bruijn.
\newblock A combinatorial problem.
\newblock {\em Proceedings of the Section of Sciences of the Koninklijke
  Nederlandse Akademie van Wetenschappen te Amsterdam}, 49(7):758--764, 1946.

\bibitem[Bus82]{Buser82}
P.~Buser.
\newblock A note on the isoperimetric constant.
\newblock {\em Ann. Sci. \'Ecole Norm. Sup.}, 15(2):213--230, 1982.

\bibitem[Che70]{cheeger70}
J.~Cheeger.
\newblock A lower bound for the smallest eigenvalue of the {L}aplacian.
\newblock In R.~C. Gunning, editor, {\em Problems in analysis, a symposium in
  honor of S.\ Bochner}, pages 195--199. Princeton Univ.\ Press, 1970.

\bibitem[Dod84]{dodziuk84}
J.~Dodziuk.
\newblock Difference equations, isoperimetric inequality and transience of
  certain random walks.
\newblock {\em Trans. Amer. Math. Soc.}, 284(2):787--794, 1984.

\bibitem[DSC96]{diaconis1996logarithmic}
P.~Diaconis and L.~Saloff-Coste.
\newblock Logarithmic {S}obolev inequalities for finite {M}arkov chains.
\newblock {\em Annals of Applied Probability}, 6(3):695--750, 1996.

\bibitem[DT98]{delorme1998spectrum}
C.~Delorme and J.-P. Tillich.
\newblock The spectrum of de {B}ruijn and {K}autz graphs.
\newblock {\em European Journal of Combinatorics}, 19(3):307--319, 1998.

\bibitem[Fie95]{fiedler1995estimate}
M.~Fiedler.
\newblock An estimate for the nonstochastic eigenvalues of doubly stochastic
  matrices.
\newblock {\em Linear algebra and its applications}, 214:133--143, 1995.

\bibitem[Fil91]{fill1991eigenvalue}
J.~A. Fill.
\newblock Eigenvalue bounds on convergence to stationarity for nonreversible
  {M}arkov chains, with an application to the exclusion process.
\newblock {\em Annals of Applied Probability}, 1(1):62--87, 1991.

\bibitem[Fro12]{frobenius1912matrizen}
G.~Frobenius.
\newblock {\"U}ber matrizen aus nicht negativen elementen.
\newblock 1912.

\bibitem[GJS74]{garey1974some}
M.~R. Garey, D.~S. Johnson, and L.~Stockmeyer.
\newblock Some simplified {NP}-complete problems.
\newblock In {\em Proceedings of the sixth annual ACM symposium on Theory of
  computing}, pages 47--63. ACM, 1974.

\bibitem[Kla84]{klawe1984limitations}
M.~Klawe.
\newblock Limitations on explicit constructions of expanding graphs.
\newblock {\em SIAM Journal on Computing}, 13(1):156--166, 1984.

\bibitem[Lax07]{lax07}
P.~D. Lax.
\newblock {\em Linear Algebra and Its Applications}.
\newblock Wiley-Interscience, Hoboken, NJ, second edition, 2007.

\bibitem[Li92]{Li92}
W.-C.~W. Li.
\newblock Character sums and {A}belian {R}amanujan graphs.
\newblock {\em J. Number Theory}, 41:199--217, 1992.
\newblock Appendix by K. Feng and W.-C. W. Li.

\bibitem[Mih89]{mihail1989conductance}
M.~Mihail.
\newblock Conductance and convergence of {M}arkov chains - a combinatorial
  treatment of expanders.
\newblock In {\em 30th Annual symposium on Foundations of computer science},
  pages 526--531. IEEE, 1989.

\bibitem[MT06]{MonT06}
R.~Montenegro and P.~Tetali.
\newblock Mathematical aspects of mixing times in {M}arkov chains.
\newblock {\em Foundations and Trends in Theoretical Computer Science},
  1(3):237--354, 2006.

\bibitem[Nil91]{Nilli91}
A.~Nilli.
\newblock On the second eigenvalue of a graph.
\newblock {\em Discrete Math.}, 91:207--210, 1991.

\bibitem[Pat12]{Pate12}
T.~H. Pate.
\newblock Extending the {H}\"older type inequality of {B}lakley and {R}oy to
  non-symmetric non-square matrices.
\newblock {\em Trans. American Mathematical Society}, 364(8):4267--4281, 2012.

\bibitem[Per07]{perron1907theorie}
O.~Perron.
\newblock Zur theorie der matrices.
\newblock {\em Mathematische Annalen}, 64(2):248--263, 1907.

\bibitem[Saa11]{Saad11}
Y.~Saad.
\newblock {\em Numerical methods for large eigenvalue problems}.
\newblock SIAM, second edition, 2011.

\bibitem[Sin92]{sinclair1992improved}
A.~Sinclair.
\newblock Improved bounds for mixing rates of {M}arkov chains and
  multicommodity flow.
\newblock {\em Combinatorics, probability and Computing}, 1(4):351--370, 1992.

\bibitem[SJ89]{SinclairJ89}
A.~Sinclair and M.~Jerrum.
\newblock Approximate counting, uniform generation and rapidly mixing {M}arkov
  chains.
\newblock {\em Information and Computation}, 82(1):93 -- 133, 1989.

\bibitem[Vaz17]{UVaz-pers}
U.~Vazirani.
\newblock Personal communication, 2017.

\end{thebibliography}

\pagebreak{}

\appendix

\section{\label{sec:Proof-of-sigmabound}Proof of Lemma \ref{lem:sigma-bound-phi}}
\begin{proof}
Let $w$ be the left and right eigenvector of $A$ for eigenvalue
1. Then note from part (3) of Lemma \ref{lem:transRtoA}, we have
that $\|A\|_{2}=1$. We first note that since the left and right eigenvectors
for eigenvalue 1 are the same, 
\begin{equation}
\max_{v\perp w}\frac{\|Av\|_{2}}{\|v\|_{2}}\leq\sigma_{2}(A).\label{eq:sigphieq1}
\end{equation}

To see this, let $W$ be a unitary matrix with first column $w$,
then since $Aw=w$ and $A^{*}w=A^{T}w=w$, $W^{*}AW$ has two blocks,
a $1\times1$ block containing the entry 1, and an $n-1\times n-1$
block, and let the second block's singular value decomposition be
$PDQ$ with $n-1\times n-1$ unitaries $P$ and $Q$ and diagonal
$D$. Then 
\[
A=W\begin{bmatrix}1 & 0\\
0 & P
\end{bmatrix}\begin{bmatrix}1 & 0\\
0 & D
\end{bmatrix}\begin{bmatrix}1 & 0\\
0 & Q
\end{bmatrix}W^{*},
\]
giving a singular value decomposition for $A$. Thus, for any $v\perp w$
with $\|v\|_{2}=1$, $W^{*}v$ has $0$ as the first entry, and thus
\[
\|Av\|_{2}\leq\|W\|_{2}\|P\|_{2}\|D\|_{2}\|Q\|_{2}\|W^{*}\|_{2}\|v\|_{2}=\|D\|_{2}=\sigma_{2}(A).
\]

Thus we have, 
\begin{equation}
\phi(A)=\min_{S:\sum_{i\in S}w_{i}^{2}\leq\frac{1}{2}}\frac{\langle\boldsymbol{1}_{S},D_{w}AD_{w}(\boldsymbol{1}-\boldsymbol{1}_{S})\rangle}{\langle\mathbf{1}_{s},D_{w}^{2}\mathbf{1}\rangle}=1-\max_{S:\sum_{i\in S}w_{i}^{2}\leq\frac{1}{2}}\frac{\langle\boldsymbol{1}_{S},D_{w}AD_{w}\boldsymbol{1}_{S}\rangle}{\langle\mathbf{1}_{s},D_{w}^{2}\mathbf{1}\rangle}\label{eq:phi-in-proof-up}
\end{equation}
where the second equality used the fact that $AD_{w}\boldsymbol{1}=Aw=w=D_{w}\mathbf{1}$.
Let $D_{w}\boldsymbol{1}_{S}=c\cdot w+v$, where $\langle w,v\rangle=0$.
Then 
\[
c=\langle w,D_{w}\mathbf{1}_{S}\rangle=\langle\mathbf{1}_{s},D_{w}^{2}\mathbf{1}\rangle=\sum_{i\in S}w_{i}^{2}
\]
and 
\begin{align}
\|v\|_{2}^{2} & =\langle v,D_{w}\mathbf{1}_{S}-c\cdot w\rangle\nonumber \\
 & =\langle v,D_{w}\mathbf{1}_{S}\rangle\nonumber \\
 & =\langle D_{w}\mathbf{1}_{S}-c\cdot w,D_{w}\mathbf{1}_{S}\rangle\nonumber \\
 & =\langle D_{w}\mathbf{1}_{S},D_{w}\mathbf{1}_{S}\rangle-c\cdot\langle D_{w}\mathbf{1}_{S},w\rangle\nonumber \\
 & =c\cdot(1-c).\label{eq:sigphieq2}
\end{align}
Thus, 
\begin{align*}
\max_{S:\sum_{i\in S}w_{i}^{2}\leq\frac{1}{2}}\frac{\langle\boldsymbol{1}_{S},D_{w}AD_{w}\boldsymbol{1}_{S}\rangle}{\langle\mathbf{1}_{s},D_{w}^{2}\mathbf{1}\rangle} & =\frac{\langle c\cdot w+v,A(c\cdot w+v)\rangle}{c}\\
 & \ \ \ \ \text{[since \ensuremath{D_{w}} is diagonal and so \ensuremath{D_{w}^{T}=D_{w}}]}\\
 & =\frac{c^{2}+\langle v,Av\rangle}{c}\\
 & \ \ \ \ \text{[since \ensuremath{Aw=w} and \ensuremath{A^{T}w=w}]}\\
 & \leq\frac{c^{2}+\|v\|_{2}\|Av\|_{2}}{c}\\
 & \leq\frac{c^{2}+\|v\|_{2}\sigma_{2}(A)\|v\|_{2}}{c}\\
 & \ \ \ \ \text{[from equation \ref{eq:sigphieq1} since \ensuremath{v\perp w}]}\\
 & =c+\sigma_{2}(A)\cdot(1-c)\\
 & \ \ \ \ \text{[using equation \ref{eq:sigphieq2}]}\\
 & =\sigma_{2}(A)+(1-\sigma_{2}(A))\cdot c\\
 & \leq\frac{1+\sigma_{2}(A)}{2}\\
 & \ \ \ \ \text{[since \ensuremath{c=\langle\mathbf{1}_{s},D_{w}^{2}\mathbf{1}\rangle\leq\frac{1}{2}}]}
\end{align*}
which completes the proof after replacing the above upper bound in
equation \ref{eq:phi-in-proof-up}.

A simple extension (and alternate proof) is the following. Let $H=AA^{T}$,
then $H$ is a nonnegative matrix that has $w$ as the left and right
eigenvector for eigenvalue 1. For any integer $k$, we know that $H^{k}$
is nonnegative, and since $\lambda_{2}(H)=\sigma_{2}^{2}(A)$, we
get 
\[
\frac{1-\sigma_{2}^{2\cdot k}(A)}{2}=\frac{1-\lambda_{2}^{k}(H)}{2}\leq\phi(H^{k})\leq k\cdot\phi(H)\leq k\cdot(2\cdot\phi(A))
\]
where the first inequality used Cheeger's inequality (Theorem \ref{thm:gen-Cheeger's-inequalities})
and the second and third inequalities were obtained by \ref{lem:pow_bound}
and its proof in Appendix \ref{sec:Proof-of-pow-bound}. Thus, we
get that for any integer $c\geq2$, 
\[
\frac{1-\sigma_{2}^{c}(A)}{2\cdot c}\leq\phi(A).
\]
\end{proof}

\section{\label{sec:Proof-of-construction}Proof of the main construction
in Theorem \ref{thm:constr_main}}
\begin{proof}
The following calculations are easy to check, to see that $A_{n}$
is a doubly stochastic matrix: 
\begin{enumerate}
\item $a_{n}\geq0$, $b_{n}\geq0$, $c_{n}\geq0$, $d_{n}\geq0$, $e_{n}\geq0$,
$f_{n}\geq0$. 
\item $a_{n}+b_{n}=1$. 
\item $c_{n}+d_{n}+(n-3)e_{n}=1$. 
\item $b_{n}+f_{n}+(n-2)c_{n}=1$. 
\end{enumerate}
This completes the proof of (1).

$A_{n}$ is triangularized as $T_{n}$ by the unitary $U_{n}$, i.e.
\[
A_{n}=U_{n}T_{n}U_{n}^{*},
\]
with $T_{n}$ and $U_{n}$ defined as follows. Recall that $m=\sqrt{n}$.
Let 
\[
r_{n}=1-\dfrac{1}{m+2},
\]

\begin{align*}
\alpha_{n} & =\dfrac{-n^{2}+2n-\sqrt{n}}{n\cdot(n-1)}=-1+\frac{1}{m\cdot(m+1)},
\end{align*}
\begin{align*}
\beta_{n} & =\dfrac{n-\sqrt{n}}{n\cdot(n-1)}=\frac{1}{m\cdot(m+1)},
\end{align*}
\[
T_{n}=\begin{bmatrix}1 & 0 & 0 & 0 & 0 & 0 & 0 & 0 & 0\\
0 & 0 & r_{n} & 0 & 0 & 0 & 0 & \cdots & 0\\
0 & 0 & 0 & r_{n} & 0 & 0 & 0 & \cdots & 0\\
0 & 0 & 0 & 0 & r_{n} & 0 & 0 & \cdots & 0\\
0 & 0 & 0 & 0 & 0 & r_{n} & 0 & \cdots & 0\\
0 & 0 & 0 & 0 & 0 & 0 & r_{n} & \cdots & 0\\
\vdots & \vdots & \vdots & \vdots & \vdots & \vdots & \ddots & \ddots & \vdots\\
0 & 0 & 0 & 0 & 0 & 0 & \cdots & 0 & r_{n}\\
0 & 0 & 0 & 0 & 0 & 0 & 0 & 0 & 0
\end{bmatrix},
\]
and 
\[
U_{n}=\begin{bmatrix}\frac{1}{\sqrt{n}} & \frac{1}{\sqrt{n}} & \frac{1}{\sqrt{n}} & \frac{1}{\sqrt{n}} & \frac{1}{\sqrt{n}} & \frac{1}{\sqrt{n}} & \cdots & \frac{1}{\sqrt{n}} & \frac{1}{\sqrt{n}}\\
\frac{1}{\sqrt{n}} & \alpha_{n} & \beta_{n} & \beta_{n} & \beta_{n} & \beta_{n} & \cdots & \beta_{n} & \beta_{n}\\
\frac{1}{\sqrt{n}} & \beta_{n} & \alpha_{n} & \beta_{n} & \beta_{n} & \beta_{n} & \cdots & \beta_{n} & \beta_{n}\\
\frac{1}{\sqrt{n}} & \beta_{n} & \beta_{n} & \alpha_{n} & \beta_{n} & \beta_{n} & \cdots & \beta_{n} & \beta_{n}\\
\frac{1}{\sqrt{n}} & \beta_{n} & \beta_{n} & \beta_{n} & \alpha_{n} & \beta_{n} & \cdots & \beta_{n} & \beta_{n}\\
\frac{1}{\sqrt{n}} & \beta_{n} & \beta_{n} & \beta_{n} & \beta_{n} & \alpha_{n} & \cdots & \beta_{n} & \beta_{n}\\
\vdots & \vdots & \vdots & \vdots & \vdots & \vdots & \ddots & \vdots & \vdots\\
\frac{1}{\sqrt{n}} & \beta_{n} & \beta_{n} & \beta_{n} & \beta_{n} & \beta_{n} & \cdots & \alpha_{n} & \beta_{n}\\
\frac{1}{\sqrt{n}} & \beta_{n} & \beta_{n} & \beta_{n} & \beta_{n} & \beta_{n} & \cdots & \beta_{n} & \alpha_{n}
\end{bmatrix}.
\]
To show that $U_{n}$ is a unitary, the following calculations can
be easily checked: 
\begin{enumerate}
\item $\frac{1}{n}+\alpha_{n}^{2}+(n-2)\cdot\beta_{n}^{2}=1$. 
\item $\frac{1}{\sqrt{n}}+\alpha_{n}+(n-2)\cdot\beta_{n}=0$. 
\item $\frac{1}{n}+2\cdot\alpha_{n}\cdot\beta_{n}+(n-3)\cdot\beta_{n}^{2}=0$. 
\end{enumerate}
Also, to see that $A_{n}=U_{n}T_{n}U_{n}^{*}$, the following calculations
are again easy to check: 
\begin{enumerate}
\item 
\[
A_{n}(1,1)=a_{n}=\langle u_{1},Tu_{1}\rangle=\frac{1}{n}+\frac{1}{n}\cdot(n-2)\cdot r_{n}.
\]
\item 
\[
A_{n}(1,2)=b_{n}=\langle u_{1},Tu_{2}\rangle=\frac{1}{n}+\frac{1}{\sqrt{n}}\cdot(n-2)\cdot r_{n}\cdot\beta_{n}.
\]
\item 
\[
A_{n}(n,1)=b_{n}=\langle u_{n},Tu_{1}\rangle=\frac{1}{n}+\frac{1}{\sqrt{n}}\cdot(n-2)\cdot r_{n}\cdot\beta_{n}.
\]
\item For $3\leq j\leq n$, 
\[
A_{n}(1,j)=0=\langle u_{1},Tu_{j}\rangle=\frac{1}{n}+\frac{1}{\sqrt{n}}\cdot\alpha_{n}\cdot r_{n}+(n-3)\cdot\frac{1}{\sqrt{n}}\cdot\beta_{n}\cdot r_{n}.
\]
\item For $2\leq i\leq n-1$, 
\[
A_{n}(i,1)=0=\langle u_{i},Tu_{1}\rangle=\frac{1}{n}+\frac{1}{\sqrt{n}}\cdot\alpha_{n}\cdot r_{n}+\frac{1}{\sqrt{n}}\cdot(n-3)\cdot\beta_{n}\cdot r_{n}.
\]
\item For $2\leq i\leq n-1$, 
\[
A_{n}(i,2)=c_{n}=\langle u_{i},Tu_{2}\rangle=\frac{1}{n}+\alpha_{n}\cdot\beta_{n}\cdot r_{n}+(n-3)\cdot\beta_{n}^{2}\cdot r_{n}.
\]
\item For $3\leq j\leq n$, 
\[
A_{n}(n,j)=c_{n}=\langle u_{n},Tu_{j}\rangle=\frac{1}{n}+\alpha_{n}\cdot\beta_{n}\cdot r_{n}+(n-3)\cdot\beta_{n}^{2}\cdot r_{n}.
\]
\item For $2\leq i\leq n-1$, 
\[
A_{n}(i,i+1)=d_{n}=\langle u_{i},Tu_{i+1}\rangle=\frac{1}{n}+\alpha_{n}^{2}\cdot r_{n}+(n-3)\cdot\beta_{n}^{2}\cdot r_{n}.
\]
\item For $2\leq i\leq n-2$, $3\leq j\leq n$, $i+1\not=j$, 
\[
A_{n}(i,j)=e_{n}=\langle u_{i},Tu_{j}\rangle=\frac{1}{n}+2\cdot\alpha_{n}\cdot\beta_{n}\cdot r_{n}+(n-4)\cdot\beta_{n}^{2}\cdot r_{n}.
\]
\item 
\[
A_{n}(n,2)=f_{n}=\langle u_{n},Tu_{2}\rangle=\frac{1}{n}+(n-2)\cdot r_{n}\cdot\beta_{n}^{2}.
\]
\end{enumerate}
We thus get a Schur decomposition for $A_{n}$, and since the diagonal
of $T_{n}$ contains only zeros except the trivial eigenvalue 1, we
get that all nontrivial eigenvalues of $A_{n}$ are zero. This completes
the proof of (2).

If we let the set $S=\{1\}$, then we get that 
\[
\phi(A_{n})\leq\phi_{S}(A_{n})=b_{n}<\frac{1}{\sqrt{n}}.
\]
Further, since $T_{n}$ can be written as $\Pi_{n}D_{n}$, where $D_{n}(1,1)=1$,
$D_{n}(i,i)=r_{n}$ for $i=2$ to $n-1$, and $D_{n}(n,n)=0$ for
some permutation $\Pi_{n}$, we get that $A_{n}=(U_{n}\Pi_{n})D_{n}U_{n}^{*}$
which gives a singular value decomposition for $A_{n}$ since $U_{n}\Pi_{n}$
and $U_{n}^{*}$ are unitaries. Thus, $A_{n}$ has exactly one singular
value that is 1, $n-2$ singular values that are $r_{n}$, and one
singular value that is 0. Thus, from Lemma \ref{lem:sigma-bound-phi},
we get that 
\[
\phi(A)\geq\frac{1-r_{n}}{2}=\frac{1}{2\cdot\left(\sqrt{n}+2\right)}\geq\frac{1}{6\sqrt{n}}
\]
and this completes the proof of (3). 
\end{proof}

\section{\label{sec:Proof-of-gen-fied}Proof of the upper bound on $\phi$
in Theorem \ref{thm:gen-phi-lambda2}}
\begin{proof}
Let $R$ be the nonnegative matrix as described, and let $u$ and
$v$ be the eigenvectors corresponding to the eigenvalue $\lambda_{1}=1$.
Let $\kappa=\min_{i}u_{i}\cdot v_{i}$. If $\kappa=0$, $\phi(R)=0$
by Definition \ref{def:gen-edge-expansion} and the upper bound on
$\phi(R)$ in the lemma trivially holds. If $\kappa>0$, then both
$u$ and $v$ are positive, and we normalize them so that $\langle u,v\rangle=1$,
and define 
\[
A=D_{u}^{\frac{1}{2}}D_{v}^{-\frac{1}{2}}RD_{u}^{-\frac{1}{2}}D_{v}^{\frac{1}{2}}
\]
and 
\[
w=D_{u}^{\frac{1}{2}}D_{v}^{\frac{1}{2}}\boldsymbol{1},
\]
where we use positive square roots for the entries in the diagonal
matrices. Further, from Lemma \ref{lem:transRtoA}, the edge expansion
and eigenvalues of $R$ and $A$ are the same, and $\|A\|_{2}=1$.
We will show the bound for $A$ and the bound for $R$ will follow.
Since $A$ has $w$ as both the left and right eigenvector for eigenvalue
1, so does $M=\frac{A+A^{T}}{2}$.

As explained before Definition \ref{def:gen-edge-expansion} for edge
expansion of general nonnegative matrices, $D_{w}AD_{w}$ is Eulerian,
since $D_{w}AD_{w}\mathbf{1}=D_{w}Aw=D_{w}w=D_{w}^{2}\mathbf{1}=D_{w}w=D_{w}A^{T}w=D_{w}A^{T}D_{w}\mathbf{1}$.
Thus, for any $S$, 
\[
\langle\mathbf{1}_{S},D_{w}AD_{w}\mathbf{1}_{\overline{S}}\rangle=\langle\mathbf{1}_{\overline{S}},D_{w}AD_{w}\mathbf{1}_{S}\rangle=\langle\mathbf{1}_{S},D_{w}A^{T}D_{w}\mathbf{1}_{\overline{S}}\rangle,
\]
and thus for any set $S$ for which $\sum_{i\in S}w_{i}^{2}\leq\frac{1}{2}$,
\[
\phi_{S}(A)=\frac{\langle\mathbf{1}_{S},D_{w}AD_{w}\mathbf{1}_{\overline{S}}\rangle}{\langle\mathbf{1}_{S},D_{w}AD_{w}\mathbf{1}\rangle}=\frac{1}{2}\cdot\frac{\langle\mathbf{1}_{S},D_{w}AD_{w}\mathbf{1}_{\overline{S}}\rangle+\langle\mathbf{1}_{S},D_{w}A^{T}D_{w}\mathbf{1}_{\overline{S}}\rangle}{\langle\mathbf{1}_{S},D_{w}AD_{w}\mathbf{1}\rangle}=\phi_{S}(M)
\]
and thus 
\begin{equation}
\phi(A)=\phi(M).\label{eq:up2-1}
\end{equation}
For any matrix $H$, let 
\[
R_{H}(x)=\frac{\langle x,Hx\rangle}{\langle x,x\rangle}.
\]
For every $x\in\mathbb{C}^{n}$, 
\begin{equation}
\text{Re}R_{A}(x)=R_{M}(x),\label{eq:up1-1}
\end{equation}
since $A$ and $\langle x,x\rangle$ are nonnegative and we can write
\begin{align*}
R_{M}(x) & =\frac{1}{2}\frac{\langle x,Ax\rangle}{\langle x,x\rangle}+\frac{1}{2}\frac{\langle x,A^{*}x\rangle}{\langle x,x\rangle}=\frac{1}{2}\frac{\langle x,Ax\rangle}{\langle x,x\rangle}+\frac{1}{2}\frac{\langle Ax,x\rangle}{\langle x,x\rangle}=\frac{1}{2}\frac{\langle x,Ax\rangle}{\langle x,x\rangle}+\frac{1}{2}\frac{\langle x,Ax\rangle^{*}}{\langle x,x\rangle}=\text{Re}R_{A}(x).
\end{align*}
Also, 
\begin{equation}
\text{Re}\lambda_{2}(A)\leq\lambda_{2}(M).\label{eq:up3-1}
\end{equation}
To see this, first note that if $\lambda_{2}(A)=1$, since there exists
some positive $u$ and $v$ for PF eigenvalue 1, then from Lemma \ref{lem:phiR-ind-uv},
the underlying graph of $A$ has multiply strongly connected components
each with eigenvalue 1, and so does $M$, and thus $\lambda_{2}(M)=1$.
If $\lambda_{2}(A)\not=1$, then let $v$ be the eigenvector corresponding
to $\lambda_{2}(A)$. Then since $Aw=w$, we have that 
\[
Av=\lambda_{2}(A)v\Rightarrow\langle w,Av\rangle=\langle w,\lambda_{2}(A)v\rangle\Leftrightarrow\langle A^{T}w,v\rangle=\lambda_{2}(A)\langle w,v\rangle\Leftrightarrow(1-\lambda_{2}(A))\langle w,v\rangle=0
\]
which implies that $v\perp w$. Thus, we have that 
\[
\text{Re}\lambda_{2}(A)=\text{Re}\frac{\langle v,Av\rangle}{\langle v,v\rangle}=\frac{\langle v,Mv\rangle}{\langle v,v\rangle}\leq\max_{u\perp w}\frac{\langle u,Mu\rangle}{\langle u,u\rangle}=\lambda_{2}(M)
\]
where the second equality uses equation \ref{eq:up1-1}, and the last
equality follows from the variational characterization of eigenvalues
stated in the Preliminaries (Appendix \ref{sec:Preliminaries}). Thus,
using equation \ref{eq:up3-1}, equation \ref{eq:up2-1} and Cheeger's
inequality for $M$ (Theorem \ref{thm:gen-Cheeger's-inequalities}),
we get 
\[
\phi(A)=\phi(M)\leq\sqrt{2\cdot(1-\lambda_{2}(M))}\leq\sqrt{2\cdot(1-\text{Re}\lambda_{2}(A))}
\]
as required. 
\end{proof}

\section{Proofs for the lower bound on $\phi$ in Theorem \ref{thm:gen-phi-lambda2}}

\subsection{\label{sec:Proof-of-pow-bound}Proof of Lemma \ref{lem:pow_bound}}
\begin{proof}
For any cut $S$ and non negative $R$ as defined, let $G_{R}=D_{u}RD_{v}$,
and note that $G_{R}$ is Eulerian, since $G_{R}\mathbf{1}=G_{R}^{T}\mathbf{1}$.
Let 
\begin{align}
\gamma_{S}(R) & =\langle\boldsymbol{1}_{\overline{S}},G_{R}\boldsymbol{1}_{S}\rangle+\langle\boldsymbol{1}_{S},G_{R}\boldsymbol{1}_{\overline{S}}\rangle\nonumber \\
 & =\langle\boldsymbol{1}_{\overline{S}},D_{u}RD_{v}\boldsymbol{1}_{S}\rangle+\langle\boldsymbol{1}_{S},D_{u}RD_{v}\boldsymbol{1}_{\overline{S}}\rangle\label{eq:gen-pow-eq}
\end{align}
for any matrix $R$ with left and right eigenvectors $u$ and $v$.
We will show that if $R$ and $B$ are non negative matrices that
have the same left and right eigenvectors $u$ and $v$ for eigenvalue
1, then for every cut $S$, 
\[
\gamma_{S}(RB)\leq\gamma_{S}(R+B)=\gamma_{S}(R)+\gamma_{S}(B).
\]

Fix any cut $S$. Assume $R=\left[\begin{array}{cc}
P & Q\\
H & V
\end{array}\right]$ and $B=\left[\begin{array}{cc}
X & Y\\
Z & W
\end{array}\right]$ naturally divided based on cut $S$. For any vector $u$, let $D_{u}$
be the diagonal matrix with $u$ on the diagonal. Since $Rv=v,R^{T}u=u,Bv=v,B^{T}u=u$,
we have

\begin{equation}
P^{T}D_{u_{S}}\boldsymbol{1}+H^{T}D_{u_{\overline{S}}}\boldsymbol{1}=D_{u_{S}}\boldsymbol{1},\label{eq:gen-pow-1}
\end{equation}
\begin{equation}
ZD_{v_{S}}\boldsymbol{1}+WD_{v_{\overline{S}}}\boldsymbol{1}=D_{v_{\overline{S}}}\boldsymbol{1},\label{eq:gen-pow-2}
\end{equation}
\begin{equation}
XD_{v_{S}}\boldsymbol{1}+YD_{v_{\overline{S}}}\boldsymbol{1}=D_{v_{S}}\boldsymbol{1},\label{eq:gen-pow-3}
\end{equation}
\begin{equation}
Q^{T}D_{u_{S}}\boldsymbol{1}+V^{T}D_{u_{\overline{S}}}\boldsymbol{1}=D_{u_{\overline{S}}}\boldsymbol{1},\label{eq:gen-pow-4}
\end{equation}
where $u$ is divided into $u_{S}$ and $u_{\overline{S}}$ and $v$
into $v_{S}$ and $v_{\overline{S}}$ naturally based on the cut $S$.
Further, in the equations above and in what follows, the vector $\mathbf{1}$
is the all 1's vector with dimension either $|S|$ or $|\overline{S}|$
which should be clear from the context of the equations, and we avoid
using different vectors to keep the notation simpler. Then we have
from equation \ref{eq:gen-pow-eq},

\begin{align*}
\gamma_{S}(R) & =\langle\boldsymbol{1}_{\overline{S}},D_{u}RD_{v}\boldsymbol{1}_{S}\rangle+\langle\boldsymbol{1}_{S},D_{u}RD_{v}\boldsymbol{1}_{\overline{S}}\rangle\\
 & =\langle\boldsymbol{1},D_{u_{S}}QD_{v_{\overline{S}}}\boldsymbol{1}\rangle+\langle\boldsymbol{1},D_{u_{\overline{S}}}HD_{v_{S}}\boldsymbol{1}\rangle
\end{align*}
and similarly 
\[
\gamma_{S}(B)=\langle\boldsymbol{1},D_{u_{S}}YD_{v_{\overline{S}}}\boldsymbol{1}\rangle+\langle\boldsymbol{1},D_{u_{\overline{S}}}ZD_{v_{S}}\boldsymbol{1}\rangle.
\]
The matrix $RB$ also has $u$ and $v$ as the left and right eigenvectors
for eigenvalue 1 respectively, and thus,

\begin{align}
\gamma_{S}(RB) & =\langle\boldsymbol{1},D_{u_{S}}PYD_{v_{\overline{S}}}\boldsymbol{1}\rangle+\langle\boldsymbol{1},D_{u_{S}}QWD_{v_{\overline{S}}}\boldsymbol{1}\rangle+\langle\boldsymbol{1},D_{u_{\overline{S}}}HXD_{v_{S}}\boldsymbol{1}\rangle+\langle\boldsymbol{1},D_{u_{\overline{S}}}VZD_{v_{S}}\boldsymbol{1}\rangle\nonumber \\
 & =\langle P^{T}D_{u_{S}}\boldsymbol{1},YD_{v_{\overline{S}}}\boldsymbol{1}\rangle+\langle\boldsymbol{1},D_{u_{S}}QWD_{v_{\overline{S}}}\boldsymbol{1}\rangle+\langle\boldsymbol{1},D_{u_{\overline{S}}}HXD_{v_{S}}\boldsymbol{1}\rangle+\langle V^{T}D_{u_{\overline{S}}}\boldsymbol{1},ZD_{v_{S}}\boldsymbol{1}\rangle\nonumber \\
 & =\langle D_{u_{S}}\boldsymbol{1}-H^{T}D_{u_{\overline{S}}}\boldsymbol{1},YD_{v_{\overline{S}}}\boldsymbol{1}\rangle+\langle\boldsymbol{1},D_{u_{S}}Q(D_{v_{\overline{S}}}\boldsymbol{1}-ZD_{v_{S}}\boldsymbol{1})\rangle\nonumber \\
 & \ \ \ \ \ +\langle\boldsymbol{1},D_{u_{\overline{S}}}H(D_{v_{S}}\boldsymbol{1}-YD_{v_{\overline{S}}}\boldsymbol{1}\rangle+\langle D_{u_{\overline{S}}}\boldsymbol{1}-Q^{T}D_{u_{S}}\boldsymbol{1},ZD_{v_{S}}\boldsymbol{1}\rangle\nonumber \\
 & \,\,\,\,\,\,\,\,\,\,\,\,\,\,\,\,\,\,\,\,\,\,\text{[from equations \ref{eq:gen-pow-1}, \ref{eq:gen-pow-2}, \ref{eq:gen-pow-3}, \ref{eq:gen-pow-4} above]}\nonumber \\
 & =\langle D_{u_{S}}\boldsymbol{1},YD_{v_{\overline{S}}}\boldsymbol{1}\rangle+\langle\boldsymbol{1},D_{u_{S}}QD_{v_{\overline{S}}}\boldsymbol{1}\rangle-\langle H^{T}D_{u_{\overline{S}}}\boldsymbol{1},YD_{v_{\overline{S}}}\boldsymbol{1}\rangle-\langle\boldsymbol{1},D_{u_{S}}QZD_{v_{S}}\boldsymbol{1}\rangle\nonumber \\
 & \ \ \ \ \ +\langle\boldsymbol{1},D_{u_{\overline{S}}}HD_{v_{S}}\boldsymbol{1}\rangle+\langle D_{u_{\overline{S}}}\boldsymbol{1},ZD_{v_{S}}\boldsymbol{1}\rangle-\langle\boldsymbol{1},D_{u_{\overline{S}}}HYD_{v_{\overline{S}}}\boldsymbol{1}\rangle-\langle Q^{T}D_{u_{S}}\boldsymbol{1},ZD_{v_{S}}\boldsymbol{1}\rangle\nonumber \\
 & \leq\langle D_{u_{S}}\boldsymbol{1},YD_{v_{\overline{S}}}\boldsymbol{1}\rangle+\langle\boldsymbol{1},D_{u_{S}}QD_{v_{\overline{S}}}\boldsymbol{1}\rangle+\langle\boldsymbol{1},D_{u_{\overline{S}}}HD_{v_{S}}\boldsymbol{1}\rangle+\langle D_{u_{\overline{S}}}\boldsymbol{1},ZD_{v_{S}}\boldsymbol{1}\rangle\nonumber \\
 & \,\,\,\,\,\,\,\,\,\,\,\,\,\,\,\,\,\,\,\,\,\,\text{[since every entry of the matrices is nonnegative, and thus each of the terms above]}\nonumber \\
 & =\gamma_{S}(R)+\gamma_{S}(B)\label{eq:gamma_bound-1}
\end{align}
Thus, noting that $R^{k}$ has $u$ and $v$ as left and right eigenvectors
for any $k$, we inductively get using inequality \ref{eq:gamma_bound-1}
that 
\[
\gamma_{S}(R^{k})\leq\gamma_{S}(R)+\gamma_{S}(R^{k-1})\leq\gamma_{S}(R)+(k-1)\cdot\gamma_{S}(R)=k\cdot\gamma_{S}(R).
\]
Note that for any fixed set $S$, the denominator in the definition
of $\phi_{S}(R)$ is independent of the matrix $R$ and depends only
on the set $S$, and since the numerator is exactly $\gamma_{S}(R)/2$,
i.e. 
\[
\phi_{S}(R)=\frac{\gamma_{S}(R)}{2\cdot\sum_{i\in S}u_{i}\cdot v_{i}}
\]
we get by letting $S$ be the set that minimizes $\phi(R)$, that
\[
\phi(R^{k})\leq k\cdot\phi(R).
\]
\end{proof}

\subsection{\label{sec:Proof-of-T-norm-bound}Proof of Lemma \ref{lem:T_norm_bound}}
\begin{proof}
Let $g_{r}(k)$ denote the maximum of the absolute value of entries
at distance $r$ from the diagonal in $T^{k}$, where the diagonal
is at distance 0 from the diagonal, the off-diagonal is at distance
1 from the diagonal and so on. More formally, 
\[
g_{r}(k)=\max_{i}|T^{k}(i,i+r)|.
\]
We will inductively show that for $\alpha\leq1$, and $r\geq1$, 
\begin{equation}
g_{r}(k)\leq{k+r \choose r}\cdot\alpha^{k-r}\cdot\sigma^{r},\label{eq:lo2-1-1}
\end{equation}
where $\sigma=\|T\|_{2}$. First note that for $r=0$, since $T$
is upper triangular, the diagonal of $T^{k}$ is $\alpha^{k}$, and
thus the hypothesis holds for $r=0$ and all $k\geq1$. Further, for
$k=1$, if $r=0$, then $g_{0}(1)\leq\alpha$ and if $r\geq1$, then
$g_{r}(1)\leq\|T\|_{2}\leq\sigma$ and the inductive hypothesis holds
also in this case, since $r\geq k$ and $\alpha^{k-r}\geq1$. For
the inductive step, assume that for all $r\geq1$ and all $j\leq k-1$,
$g_{r}(j)\leq{j+r \choose r}\cdot\alpha^{j-r}\cdot\sigma^{r}$. We
will show the calculation for $g_{r}(k)$.

Since $|a+b|\leq|a|+|b|$, 
\begin{align*}
g_{r}(k) & \leq\sum_{i=0}^{r}g_{r-i}(1)\cdot g_{i}(k-1)\\
 & =g_{0}(1)\cdot g_{r}(k-1)+\sum_{i=0}^{r-1}g_{r-i}(1)\cdot g_{i}(k-1).
\end{align*}
The first term can be written as, 
\begin{align}
g_{0}(1)\cdot g_{r}(k-1) & =\alpha\cdot{k-1+r \choose r}\cdot\alpha^{k-1-r}\cdot\sigma^{r}\nonumber \\
 & \ \ \ \ \ \ \ \ \text{[using that \ensuremath{g_{0}(1)\leq\alpha} and the inductive hypothesis for the second term]}\nonumber \\
 & \leq\alpha^{k-r}\cdot\sigma^{r}\cdot{k+r \choose r}\cdot\dfrac{{k-1+r \choose r}}{{k+r \choose r}}\nonumber \\
 & \leq\frac{k}{k+r}\cdot{k+r \choose r}\cdot\alpha^{k-r}\cdot\sigma^{r}\label{eq:inproofeq-1}
\end{align}
and the second term as 
\begin{align*}
g_{r}(k) & \leq\sum_{i=0}^{r-1}g_{r-i}(1)\cdot g_{i}(k-1)\\
 & \leq\sigma\cdot\sum_{i=0}^{r-1}{k-1+i \choose i}\cdot\alpha^{k-1-i}\cdot\sigma^{i}\\
 & \ \ \ \ \ \text{[using \ensuremath{g_{r-i}(1)\leq\sigma} and the inductive hypothesis for the second term]}\\
 & \le\sigma^{r}\cdot\alpha^{k-r}\cdot{k+r \choose r}\cdot\sum_{i=0}^{r-1}\cdot\dfrac{{k-1+i \choose i}}{{k+r \choose r}}\cdot\alpha^{r-1-i}\\
 & \ \ \ \ \ \text{[using \ensuremath{\sigma^{i}\leq\sigma^{r-1}} since \ensuremath{\sigma\geq1} and \ensuremath{i\leq r-1}]}\\
 & \leq\sigma^{r}\cdot\alpha^{k-r}\cdot{k+r \choose r}\cdot\sum_{i=0}^{r-1}\cdot\dfrac{{k-1+i \choose i}}{{k+r \choose r}}\\
 & \ \ \ \ \ \text{[using \ensuremath{\alpha\leq1} and \ensuremath{i\leq r-1}]}
\end{align*}
We will now show that the quantity inside the summation is at most
$\frac{r}{k+r}$. Inductively, for $r=1$, the statement is true,
and assume that for any other $r$, 
\[
\sum_{i=0}^{r-1}\frac{{k-1+i \choose i}}{{k+r \choose r}}\leq\frac{r}{k+r}.
\]
Then we have 
\begin{align*}
\sum_{i=0}^{r}\frac{{k-1+i \choose i}}{{k+r+1 \choose r+1}} & =\frac{{k+r \choose r}}{{k+r+1 \choose r+1}}\cdot\sum_{i=0}^{r-1}\frac{{k-1+i \choose i}}{{k+r \choose r}}+\frac{{k-1+r \choose k-1}}{{k+r+1 \choose k}}\\
 & \leq\frac{r+1}{k+r+1}\cdot\frac{r}{k+r}+\frac{(r+1)\cdot k}{(k+r+1)\cdot(k+r)}\\
 & =\frac{r+1}{k+r+1}
\end{align*}
Thus we get that the second term is at most $\sigma^{r}\cdot\alpha^{k-r}\cdot{k+r \choose r}\cdot\frac{r}{k+r}$,
and combining it with the first term (equation \ref{eq:inproofeq-1}),
it completes the inductive hypothesis. Noting that the operator norm
is at most the Frobenius norm, and since $g_{r}(k)$ is increasing
in $r$ and the maximum value of $r$ is $n$, we get using equation
\ref{eq:lo2-1-1}, 
\begin{align*}
\|T^{k}\|_{2} & \leq\sqrt{\sum_{i,j}|T^{k}(i,j)|^{2}}\\
 & \leq n\cdot\sigma^{n}\cdot\alpha^{k-n}\cdot{k+n \choose n}
\end{align*}
as required. 
\end{proof}

\subsection{\label{sec:Proof-of-T-norm-1-bound}Proof of Lemma \ref{lem:T_norm_1_bound}}
\begin{proof}
Let $X=T^{k_{1}}$, where $k_{1}=\frac{c_{1}}{1-\alpha}$ for $\alpha<1$.
Then $\|X\|_{2}\leq\|T\|_{2}^{k_{1}}\leq1$, and for every $i$, 
\[
|X_{i,i}|\leq|T_{i,i}|^{k_{1}}\leq|\lambda_{m}|^{c_{1}/(1-\alpha)}\leq e^{-c_{1}}.
\]
Using Lemma \ref{lem:T_norm_bound} for $X$ with $\sigma=1$ and
$\beta=e^{-c_{1}}$, we get that for $k_{2}=c_{2}\cdot n$, 
\begin{align*}
\|X^{k_{2}}\| & \leq n\cdot{k_{2}+n \choose n}\cdot e^{-c_{1}(k_{2}-n)}\\
 & \leq n\cdot e^{n}\cdot(c_{2}+1)^{n}\cdot e^{-c_{1}(c_{2}-1)n}\\
 & \ \ \ \ \ \ \ \text{[using \ensuremath{{a \choose b}\leq\left(\frac{ea}{b}\right)^{b}}]}\\
 & =\exp\left(n\cdot\left(\frac{\ln n}{n}+\ln(c_{2}+1)+1+c_{1}-c_{1}c_{2}\right)\right)
\end{align*}
and to have this quantity less than $\epsilon$, we require 
\begin{align}
 & \exp\left(n\cdot\left(\frac{\ln n}{n}+\ln(c_{2}+1)+1+c_{1}-c_{1}c_{2}\right)\right)\leq\epsilon\nonumber \\
\Leftrightarrow & \left(\frac{1}{\epsilon}\right)^{\frac{1}{n}}\leq\exp\left(-1\cdot\left(\frac{\ln n}{n}+\ln(c_{2}+1)+1+c_{1}-c_{1}c_{2}\right)\right)\nonumber \\
\Leftrightarrow & \frac{1}{n}\ln\frac{n}{\epsilon}+1+c_{1}+\ln(c_{2}+1)\leq c_{1}c_{2}\label{eq:proofmodbound-1}
\end{align}
and we set 
\[
c_{1}=1+\frac{1}{2.51}\cdot\frac{1}{n}\ln\left(\frac{n}{\epsilon}\right)
\]
and $c_{2}=3.51$ which always satisfies inequality \ref{eq:proofmodbound-1}.
As a consequence, for 
\begin{align*}
k & =k_{1}\cdot k_{2}\\
 & =\frac{c_{1}\cdot c_{2}\cdot n}{1-\alpha}\\
 & =\dfrac{3.51\cdot n+1.385\cdot\ln\left(\frac{n}{\epsilon}\right)}{1-\alpha}
\end{align*}
we get, 
\[
\|T^{k}\|_{2}\leq\|X^{k_{2}}\|\leq\epsilon
\]
as required. 
\end{proof}

\subsection{\label{sec:Proof-of-transRtoA}Proof of Lemma \ref{lem:transRtoA}}
\begin{proof}
Let the matrix $A$ be as defined, and let $w=D_{u}^{\frac{1}{2}}D_{v}^{\frac{1}{2}}\mathbf{1}$.
Then it is easily checked that $Aw=w$ and $A^{T}w=w$. Further, 
\[
\langle w,w\rangle=\langle D_{u}^{\frac{1}{2}}D_{v}^{\frac{1}{2}}\mathbf{1},D_{u}^{\frac{1}{2}}D_{v}^{\frac{1}{2}}\mathbf{1}\rangle=\langle D_{u}\mathbf{1},D_{v}\mathbf{1}\rangle=\langle u,v\rangle=1
\]
where we used the fact that the matrices $D_{u}^{\frac{1}{2}}$ and
$D_{v}^{\frac{1}{2}}$ are diagonal, and so they commute, and are
unchanged by taking transposes. Let $S$ be any set. The condition
$\sum_{i\in S}u_{i}\cdot v_{i}\leq\frac{1}{2}$ translates to $\sum_{i\in S}w_{i}^{2}\leq\frac{1}{2}$
since $u_{i}\cdot v_{i}=w_{i}^{2}$. Thus, for any set $S$ for which
$\sum_{i\in S}u_{i}\cdot v_{i}=\sum_{i\in S}w_{i}^{2}\leq\frac{1}{2}$,
\begin{align*}
\phi_{S}(R) & =\frac{\langle\mathbf{1}_{S},D_{u}RD_{v}\mathbf{1}_{\overline{S}}\rangle}{\langle\mathbf{1}_{S},D_{u}D_{v}\mathbf{1}_{S}\rangle}\\
 & =\frac{\langle\mathbf{1}_{S},D_{u}D_{u}^{-\frac{1}{2}}D_{v}^{\frac{1}{2}}AD_{u}^{\frac{1}{2}}D_{v}^{-\frac{1}{2}}D_{v}\mathbf{1}_{\overline{S}}\rangle}{\langle\mathbf{1}_{S},D_{u}^{\frac{1}{2}}D_{v}^{\frac{1}{2}}D_{u}^{\frac{1}{2}}D_{v}^{\frac{1}{2}}\mathbf{1}_{S}\rangle}\\
 & =\frac{\langle\mathbf{1}_{S},D_{u}^{\frac{1}{2}}D_{v}^{\frac{1}{2}}AD_{u}^{\frac{1}{2}}D_{v}^{\frac{1}{2}}\mathbf{1}_{\overline{S}}\rangle}{\langle\mathbf{1}_{S},D_{u}^{\frac{1}{2}}D_{v}^{\frac{1}{2}}D_{u}^{\frac{1}{2}}D_{v}^{\frac{1}{2}}\mathbf{1}_{S}\rangle}\\
 & =\frac{\langle\mathbf{1}_{S},D_{w}AD_{w}\mathbf{1}_{\overline{S}}\rangle}{\langle\mathbf{1}_{S},D_{w}^{2}\mathbf{1}_{S}\rangle}\\
 & =\phi_{S}(A)
\end{align*}
and (1) holds. Further, since $A$ is a similarity transform of $R$,
all eigenvalues are preserved and (2) holds. For (3), consider the
matrix $H=A^{T}A$. Since $w$ is the positive left and right eigenvector
for $A$, i.e. $Aw=w$ and $A^{T}w=w$, we have $Hw=w$. But since
$A$ was nonnegative, so is $H$, and since it has a positive eigenvector
$w$ for eigenvalue 1, by Perron-Frobenius (Theorem \ref{thm:(Perron-Frobenius)},
part 2), $H$ has PF eigenvalue 1. But $\lambda_{i}(H)=\sigma_{i}^{2}(A)$,
where $\sigma_{i}(A)$ is the $i$'th largest singular value of $A$.
Thus, we get $\sigma_{1}^{2}(A)=\lambda_{1}(H)=1$, and thus $\|A\|_{2}=1$. 
\end{proof}

\subsection{\label{sec:Proof-of-mod-lambda-bound}Proof of Lemma \ref{lem:mod_lambda_bound}}
\begin{proof}
From the given $R$ with positive left and right eigenvectors $u$
and $v$ for eigenvalue 1 as stated, let $A=D_{u}^{\frac{1}{2}}D_{v}^{-\frac{1}{2}}RD_{u}^{-\frac{1}{2}}D_{v}^{\frac{1}{2}}$
and $w=D_{u}^{\frac{1}{2}}D_{v}^{\frac{1}{2}}\mathbf{1}$ as in Lemma
\ref{lem:transRtoA}. Note that $w$ is positive, and 
\[
\langle w,w\rangle=\langle D_{u}^{\frac{1}{2}}D_{v}^{\frac{1}{2}}\mathbf{1},D_{u}^{\frac{1}{2}}D_{v}^{\frac{1}{2}}\mathbf{1}\rangle=\langle u,v\rangle=1.
\]
Further, $Aw=w$ and $A^{T}w=w$.

Let $A=w\cdot w^{T}+B$. Since $(w\cdot w^{T})^{2}=w\cdot w^{T}$
and $Bw\cdot w^{T}=w\cdot w^{T}B=0$, we get 
\[
A^{k}=w\cdot w^{T}+B^{k}.
\]
Let $B=UTU^{*}$ be the Schur decomposition of $B$, where the diagonal
of $T$ contains all but the stochastic eigenvalue of $A$, which
is replaced by 0, since $w$ is both the left and right eigenvector
for eigenvalue 1 of $A$, and that space is removed in $w\cdot w^{T}$.
Further, the maximum diagonal entry of $T$ is at most $|\lambda_{m}|$
where $\lambda_{m}$ is the nontrivial eigenvalue of $A$ (or $R$)
that is maximum in magnitude. Note that if $|\lambda_{m}|=1$, then
the lemma is trivially true since $\phi(R)\geq0$, and thus assume
that $|\lambda_{m}|<1.$ Since $w\cdot w^{T}B=Bw\cdot w^{T}=0$ and
$\|A\|_{2}\leq1$ from Lemma \ref{lem:transRtoA}, we have that $\|B\|_{2}\leq1$.

Thus, using Lemma \ref{lem:T_norm_1_bound} (in fact, the last lines
in the proof of Lemma \ref{lem:T_norm_1_bound} in Appendix \ref{sec:Proof-of-T-norm-1-bound}
above), for 
\[
k\geq\dfrac{3.51\cdot n+1.385\cdot\ln\left(\frac{n}{\epsilon}\right)}{1-|\lambda_{m}|},
\]
we get that 
\[
\|B^{k}\|_{2}=\|T^{k}\|_{2}\leq\epsilon,
\]
and for $e_{i}$ being the vector with 1 at position $i$ and zeros
elsewhere, we get using Cauchy-Schwarz 
\[
|B^{k}(i,j)|=\left|\langle e_{i},B^{k}e_{j}\rangle\right|\leq\|e_{i}\|_{2}\|B\|_{2}\|e_{j}\|_{2}\leq\epsilon.
\]
Thus, for any set $S$ for which $\sum_{i\in S}w_{i}^{2}\leq\frac{1}{2}$,
let 
\[
\epsilon=c\cdot\kappa=c\cdot\min_{i}u_{i}\cdot v_{i}=c\cdot\min_{i}w_{i}^{2}\leq c\cdot w_{i}\cdot w_{j}
\]
for any $i,j$, where $c$ is some constant to be set later. Then
\begin{align*}
\phi_{S}(A^{k}) & =\frac{\langle\mathbf{1}_{S},D_{w}A^{k}D_{w}\mathbf{1}_{\overline{S}}\rangle}{\langle\mathbf{1}_{S},D_{w}D_{w}\mathbf{1}\rangle}\\
 & =\frac{\sum_{i\in S,j\in\overline{S}}A^{k}(i,j)\cdot w_{i}\cdot w_{j}}{\sum_{i\in S}w_{i}^{2}}\\
 & =\frac{\sum_{i\in S,j\in\overline{S}}\left(w\cdot w^{T}(i,j)+B^{k}(i,j)\right)\cdot w_{i}\cdot w_{j}}{\sum_{i\in S}w_{i}^{2}}\\
 & \geq\frac{\sum_{i\in S,j\in\overline{S}}\left(w_{i}\cdot w_{j}-\epsilon\right)\cdot w_{i}\cdot w_{j}}{\sum_{i\in S}w_{i}^{2}}\\
 & \geq(1-c)\frac{\sum_{i\in S}w_{i}^{2}\sum_{j\in\overline{S}}w_{j}^{2}}{\sum_{i\in S}w_{i}^{2}}\\
 & \geq\frac{1}{2}(1-c)
\end{align*}
since $\sum_{i\in\overline{S}}w_{i}^{2}\geq\frac{1}{2}.$ Thus, we
get that for 
\[
k\geq\dfrac{3.51n+1.385\cdot\ln\left(\frac{c\cdot n}{\kappa}\right)}{1-|\lambda_{m}|}
\]
the edge expansion 
\[
\phi(A^{k})\geq\frac{1}{2}(1-c),
\]
and thus using Lemma \ref{lem:pow_bound}, we get that 
\begin{align*}
\phi(A) & \geq\frac{1}{k}\cdot\phi(A^{k})\geq\frac{1-c}{2}\cdot\frac{1-|\lambda_{m}|}{3.51\cdot n+1.385\cdot\ln\left(c\cdot n\right)+1.385\cdot\ln\left(\frac{1}{\kappa}\right)}
\end{align*}
and setting $c=\frac{1}{1.4\cdot e}$, and using $\ln(e\cdot x)\leq x,$we
get that 
\begin{align*}
\phi(A) & \geq\frac{1-|\lambda_{m}|}{15\cdot n+2\cdot\ln\left(\frac{1}{\kappa}\right)}\\
 & \geq\frac{1}{15}\cdot\frac{1-|\lambda_{m}|}{n+\ln\left(\frac{1}{\kappa}\right)}
\end{align*}
\end{proof}

\subsection{\label{sec:Proof-of-real-lambda-bound}Proof of Lemma \ref{lem:real-lambda-bound}}
\begin{proof}
Let $R$ be a nonnegative matrix with PF eigenvalue 1, and let some
eigenvalue $\lambda_{r}(R)=\alpha+i\beta$, and letting the gap $g=1-\alpha$,
we can rewrite 
\[
\lambda_{r}=(1-g)+i\beta
\]
with $(1-g)^{2}+\beta^{2}\leq1$, or 
\begin{equation}
g^{2}+\beta^{2}\leq2g.\label{eq:lo3}
\end{equation}
Consider the lazy random walk $\tilde{R}=pI+(1-p)R$ for some $0\leq p\leq1$,
the eigenvalue modifies as 
\begin{align*}
\lambda_{r}(\tilde{R}) & =p+(1-p)(1-g)+i(1-p)\beta\\
 & =1-g(1-p)+i\beta(1-p)
\end{align*}
and for any set $S$, since $\phi_{S}(I)=0,$ we have 
\[
\phi_{S}(\tilde{R})=(1-p)\phi_{S}(R)
\]
or 
\begin{equation}
\phi(\tilde{R})=(1-p)\phi(R).\label{eq:finb1}
\end{equation}
Further, 
\begin{align}
1-|\lambda_{r}(\tilde{R})| & =1-\sqrt{(1-g(1-p))^{2}+(\beta(1-p))^{2}}\nonumber \\
 & =1-\sqrt{1+(g^{2}+\beta^{2})(1-p)^{2}-2g(1-p)}\nonumber \\
 & \geq1-\sqrt{1+2g(1-p)^{2}-2g(1-p)}\nonumber \\
 & \ \ \ \ \ \text{[using inequality \ref{eq:lo3}]}\nonumber \\
 & =1-\sqrt{1-2gp(1-p)}\nonumber \\
 & \geq1-e^{-gp(1-p)}\nonumber \\
 & \ \ \ \ \ \text{[using \ensuremath{1-x\leq e^{-x}}]}\nonumber \\
 & \geq1-(1-\frac{1}{2}gp(1-p))\nonumber \\
 & \ \ \ \ \ \text{[using \ensuremath{e^{-x}\leq1-\frac{1}{2}x} for \ensuremath{0\leq x\leq1}]}\nonumber \\
 & =\frac{1}{2}gp(1-p).\label{eq:lo1}
\end{align}
Thus, we have that, 
\begin{align*}
(1-\text{Re}\lambda_{2}(R))\cdot p(1-p) & \leq(1-\text{Re}\lambda_{m}(R))\cdot p(1-p)\\
 & \ \ \ \ \ \text{[since both \ensuremath{\lambda_{2}} and \ensuremath{\lambda_{m}} are nontrivial eigenvalues]}\\
 & \leq2\cdot(1-|\lambda_{m}(\tilde{R})|)\\
 & \ \ \ \ \ \text{[using inequality \ref{eq:lo1}]}\\
 & \leq2\cdot15\cdot\left(n+\ln\left(\frac{1}{\kappa}\right)\right)\cdot\phi(\tilde{R})\\
 & \ \ \ \ \ \text{[using Lemma \ref{lem:mod_lambda_bound} for \ensuremath{\tilde{R}}]}\\
 & \leq2\cdot15\cdot\left(n+\ln\left(\frac{1}{\kappa}\right)\right)\cdot(1-p)\phi(R)\\
 & \ \ \ \ \ \text{[from equation \ref{eq:finb1}]}
\end{align*}
and taking the limit as $p\rightarrow1$, we get that 
\[
\frac{1}{30}\cdot\frac{1-\text{Re}\lambda_{2}(R)}{n+\ln\left(\frac{1}{\kappa}\right)}\leq\phi(A).
\]
Further, if $A$ is doubly stochastic, then $\kappa=\frac{1}{n}$,
and using $\ln(en)\leq n$ and the last equations at the end of the
proof of Lemma \ref{lem:T_norm_1_bound} in Appendix \ref{sec:Proof-of-T-norm-1-bound}
above, we get using the same calculations that 
\[
\frac{1}{35}\cdot\frac{1-\text{Re}\lambda_{2}(R)}{n}\leq\phi(A)
\]
as required. 
\end{proof}

\section{Proofs of bounds on Mixing Time}

\subsection{\label{sec:Proof-of-mixt-sing-bound}Proof of Lemma \ref{lem:mixt-singval}}
\begin{proof}
Writing $\tau$ as shorthand for $\tau_{\epsilon}(A)$ and since $A=w\cdot w^{T}+B$
with $Bw=0$ and $B^{T}w=0$, we have that $A^{\tau}=w\cdot w^{T}+B^{\tau}$.
Let $x$ be a nonnegative vector such that $\langle w,x\rangle=\|D_{w}x\|_{1}=1$.
As discussed after Definition \ref{def:gen-Mixing-time}, this is
sufficient for bounding mixing time for all $x$. Then we have 
\begin{align*}
\|D_{w}(A^{\tau}-w\cdot w^{T})x\|_{1} & =\|D_{w}B^{\tau}x\|_{1}=\|D_{w}B^{\tau}D_{w}^{-1}y\|_{1}\leq\|D_{w}\|_{1}\|B^{\tau}\|_{1}\|D_{w}^{-1}\|_{1}\|y\|_{1}
\end{align*}
where $y=D_{w}x$ and $\|y\|_{1}=1$. Further, since $\|w\|_{2}=1$
and $\kappa=\min_{i}w_{i}^{2}$, we have $\|D_{w}\|_{1}\leq1$ and
$\|D_{w}^{-1}\|_{1}\leq\frac{1}{\sqrt{\kappa}}$, and using these
bounds to continue the inequalities above, we get 
\[
\|D_{w}(A^{\tau}-w\cdot w^{T})x\|_{1}\leq\frac{1}{\sqrt{\kappa}}\|B^{\tau}\|_{1}\leq\frac{\sqrt{n}}{\sqrt{\kappa}}\|B^{\tau}\|_{2}\leq\frac{\sqrt{n}}{\sqrt{\kappa}}\|B\|_{2}^{\tau}\leq\frac{\sqrt{n}}{\sqrt{\kappa}}\left(\sigma_{2}(A)\right)^{\tau}=\frac{\sqrt{n}}{\sqrt{\kappa}}\left(\sigma_{2}^{c}(A)\right)^{\frac{\tau}{c}}\leq\epsilon
\]
where the second inequality is Cauchy-Schwarz, the fourth inequality
used $\|B\|_{2}\leq\sigma_{2}(A)$ as shown in the proof (in Appendix
\ref{sec:Proof-of-sigmabound}) of Lemma \ref{lem:sigma-bound-phi},
and the last inequality was obtained by setting $\tau=\tau_{\epsilon}(A)=\dfrac{c\cdot\ln\left(\frac{\sqrt{n}}{\sqrt{\kappa}\cdot\epsilon}\right)}{1-\sigma_{2}^{c}(A)}$. 
\end{proof}

\subsection{\label{sec:Proof-of-mixt-phi-rel}Proof of Lemma \ref{lem:mixt-phi-rel}}
\begin{proof}
We first show that the mixing time of $R$ and $A=D_{u}^{\frac{1}{2}}D_{v}^{-\frac{1}{2}}RD_{u}^{-\frac{1}{2}}D_{v}^{\frac{1}{2}}$
are the same. Let $w=D_{u}^{\frac{1}{2}}D_{v}^{\frac{1}{2}}\mathbf{1}$,
the left and right eigenvector of $A$ for eigenvalue 1. We will show
that for every $x$ for which $\|D_{u}x\|_{1}=1$, there exists some
$y$ with $\|D_{w}y\|_{1}=1$ such that 
\[
\|D_{u}(R^{\tau}-v\cdot u^{T})x\|_{1}=\|D_{w}(A^{\tau}-w\cdot w^{T})y\|_{1}
\]
which would imply that $\tau_{\epsilon}(R)=\tau_{\epsilon}(A)$ by
definition.

Let $x$ be a vector with $\|D_{u}x\|_{1}=1$ and let $y=D_{u}^{\frac{1}{2}}D_{v}^{-\frac{1}{2}}x$.
Then since $D_{w}=D_{u}^{\frac{1}{2}}D_{v}^{\frac{1}{2}},$ we get
$\|D_{w}y\|_{1}=\|D_{u}x\|_{1}=1$. Let $R=v\cdot u^{T}+B_{R}$ and
$A=w\cdot w^{T}+B_{A}$ where $B_{A}=D_{u}^{\frac{1}{2}}D_{v}^{-\frac{1}{2}}B_{R}D_{u}^{-\frac{1}{2}}D_{v}^{\frac{1}{2}}$
and $B_{A}^{\tau}=D_{u}^{\frac{1}{2}}D_{v}^{-\frac{1}{2}}B_{R}^{\tau}D_{u}^{-\frac{1}{2}}D_{v}^{\frac{1}{2}}$.
Then 
\begin{align*}
D_{u}(R^{\tau}-v\cdot u^{T})x & =D_{u}B_{R}^{\tau}x\\
 & =D_{u}(D_{u}^{-\frac{1}{2}}D_{v}^{\frac{1}{2}}B_{A}^{\tau}D_{u}^{\frac{1}{2}}D_{v}^{-\frac{1}{2}})x\\
 & =D_{w}B_{A}^{\tau}y\\
 & =D_{w}(A^{\tau}-w\cdot w^{T})y
\end{align*}
as required. Further, from Lemma \ref{lem:transRtoA}, we have that
$\phi(R)=\phi(A)$. Thus, we show the bound for $\tau_{\epsilon}(A)$
and $\phi(A)$, and the bound for $R$ will follow. Note that if $R$
is $\frac{1}{2}$-lazy, then $A$ is also $\frac{1}{2}$-lazy, since
if $R=\frac{1}{2}I+\frac{1}{2}C$ where $C$ is nonnegative, then
\[
A=\frac{1}{2}D_{u}^{\frac{1}{2}}D_{v}^{-\frac{1}{2}}ID_{u}^{-\frac{1}{2}}D_{v}^{\frac{1}{2}}+D_{u}^{\frac{1}{2}}D_{v}^{-\frac{1}{2}}CD_{u}^{-\frac{1}{2}}D_{v}^{\frac{1}{2}}=\frac{1}{2}I+D_{u}^{\frac{1}{2}}D_{v}^{-\frac{1}{2}}CD_{u}^{-\frac{1}{2}}D_{v}^{\frac{1}{2}}.
\]

We first lower bound $\tau_{\epsilon}(A)$ using $\phi$, showing
the lower bound on mixing time in terms of the edge expansion. We
will show the bound for nonnegative vectors $x$, and by Definition
\ref{def:gen-Mixing-time} and the discussion after, it will hold
for all $x$. By definition of mixing time, we have that for any nonnegative
$x$ such that $\|D_{w}x\|_{1}=\langle w,x\rangle=1$, since $A^{\tau}=w\cdot w^{T}+B^{\tau}$,
\[
\|D_{w}(A^{\tau}-w\cdot w^{T})x\|_{1}=\|D_{w}B^{\tau}x\|_{1}\leq\epsilon
\]
and letting $y=D_{w}x$, we get that for any nonnegative $y$ with
$\|y\|_{1}=1$, we have 
\[
\|D_{w}B^{\tau}D_{w}^{-1}y\|_{1}\leq\epsilon.
\]
Plugging the standard basis vectors for $i$, we get that for every
$i$, 
\[
\sum_{j}\left|\frac{1}{w(i)}\cdot B^{\tau}(j,i)\cdot w(j)\right|=\frac{1}{w(i)}\cdot\sum_{j}\left|B^{\tau}(j,i)\right|\cdot w(j)\leq\epsilon.
\]
Thus, for any set $S$, 
\begin{equation}
\sum_{i\in S}w(i)^{2}\cdot\frac{1}{w(i)}\cdot\sum_{j}\left|B^{\tau}(j,i)\right|\cdot w(j)=\sum_{i\in S}\sum_{j}w(i)\cdot\left|B^{\tau}(j,i)\right|\cdot w(j)\leq\sum_{i\in S}w(i)^{2}\cdot\epsilon.\label{eq:proof-mixtphi-1}
\end{equation}
Thus, for any set $S$ for which $\sum_{i\in S}w_{i}^{2}\leq\frac{1}{2}$,
\begin{align*}
\phi_{S}(A^{\tau}) & =\frac{\langle\mathbf{1}_{S},D_{w}A^{\tau}D_{w}\mathbf{1}_{\overline{S}}\rangle}{\langle\mathbf{1}_{S},D_{w}^{2}\mathbf{1}\rangle}=\frac{\langle\mathbf{1}_{\overline{S}},D_{w}A^{\tau}D_{w}\mathbf{1}_{S}\rangle}{\langle\mathbf{1}_{S},D_{w}^{2}\mathbf{1}\rangle}\\
 & \ \ \ \ \text{[since \ensuremath{D_{w}A^{\tau}D_{w}} is Eulerian, i.e. \ensuremath{D_{w}A^{\tau}D_{w}\mathbf{1}=\ensuremath{D_{w}(A^{\tau})^{T}D_{w}}\mathbf{1}}]}\\
 & =\frac{\sum_{i\in S}\sum_{j\in\overline{S}}A^{\tau}(j,i)\cdot w(i)\cdot w(j)}{\sum_{i\in S}w_{i}^{2}}\\
 & =\frac{\sum_{i\in S}\sum_{j\in\overline{S}}w(j)\cdot w(i)\cdot w(i)\cdot w(j)+\sum_{i\in S}\sum_{j\in\overline{S}}B^{\tau}(j,i)\cdot w(i)\cdot w(j)}{\sum_{i\in S}w_{i}^{2}}\\
 & \geq\sum_{j\in\overline{S}}w_{j}^{2}-\frac{\sum_{i\in S}\sum_{j\in\overline{S}}|B^{\tau}(j,i)|\cdot w(i)\cdot w(j)}{\sum_{i\in S}w_{i}^{2}}\\
 & \geq\sum_{j\in\overline{S}}w_{j}^{2}-\frac{\sum_{i\in S}\sum_{j}|B^{\tau}(j,i)|\cdot w(i)\cdot w(j)}{\sum_{i\in S}w_{i}^{2}}\\
 & \geq\frac{1}{2}-\epsilon\\
 & \ \ \ \ \text{[since \ensuremath{\sum_{i\in S}w_{i}^{2}\leq\frac{1}{2}} and \ensuremath{\sum_{i}w_{i}^{2}=1}, and the second term follows from equation \ref{eq:proof-mixtphi-1}]}
\end{align*}
and thus 
\[
\phi(A^{\tau})\geq\frac{1}{2}-\epsilon,
\]
and using Lemma \ref{lem:pow_bound}, we obtain 
\[
\phi(A^{\tau})\leq\tau\cdot\phi(A),
\]
or 
\[
\frac{\frac{1}{2}-\epsilon}{\phi(A)}\leq\tau_{\epsilon}(A).
\]

We now upper bound $\tau_{\epsilon}(A)$ in terms of $\phi(A)$. From
Lemma \ref{lem:mixt-singval} for $c=2$, we have that 
\begin{equation}
\tau_{\epsilon}(A)\leq\frac{2\cdot\ln\left(\frac{n}{\kappa\cdot\epsilon}\right)}{1-\sigma_{2}^{2}(A)}=\frac{2\cdot\ln\left(\frac{n}{\kappa\cdot\epsilon}\right)}{1-\lambda_{2}(AA^{T})}.\label{eq:proof-mixtphi-2}
\end{equation}
The continuing argument is straightforward, and was shown in Fill
\cite{fill1991eigenvalue} (albeit for row-stochastic matrices), and
we reproduce it here for completeness. Since $A$ is $\frac{1}{2}$-lazy,
we have that $2A-I$ is nonnegative, has PF eigenvalue 1, and has
the same left and right eigenvector $w$ for eigenvalue 1 implying
that the PF eigenvalue is 1 by Perron-Frobenius (Theorem \ref{thm:(Perron-Frobenius)},
part 2), and also that its largest singular value is 1 from Lemma
\ref{lem:transRtoA}. Further, $\frac{1}{2}(A+A^{T})$ also has the
same properties. Thus, for any $x$, 
\begin{align*}
AA^{T} & =\frac{A+A^{T}}{2}+\frac{(2A-I)(2A^{T}-I)}{4}-\frac{I}{4}\\
\Rightarrow\langle x,AA^{T}x\rangle & \leq\langle x,\frac{A+A^{T}}{2}x\rangle+\|x\|_{2}\frac{\|(2A-I)\|_{2}\|(2A^{T}-I)\|_{2}}{4}\|x\|_{2}-\frac{\|x\|_{2}^{2}}{4}\leq\langle x,\frac{A+A^{T}}{2}x\rangle\\
\Rightarrow\max_{x\perp w}\langle x,AA^{T}x\rangle & \leq\max_{x\perp w}\langle x,\frac{A+A^{T}}{2}x\rangle.
\end{align*}
where the last implication followed by the variational characterization
of eigenvalues since $AA^{T}$ and $\frac{1}{2}(A+A^{T})$ are symmetric,
and thus 
\[
\lambda_{2}(AA^{T})\leq\lambda_{2}\left(\frac{A+A^{T}}{2}\right)
\]
which from equation \ref{eq:proof-mixtphi-2}, gives

\[
\tau_{\epsilon}(A)\leq\frac{2\cdot\ln\left(\frac{n}{\kappa\cdot\epsilon}\right)}{1-\sigma_{2}^{2}(A)}=\frac{2\cdot\ln\left(\frac{n}{\kappa\cdot\epsilon}\right)}{1-\lambda_{2}(AA^{T})}\leq\frac{2\cdot\ln\left(\frac{n}{\kappa\cdot\epsilon}\right)}{1-\lambda_{2}\left(\frac{A+A^{T}}{2}\right)}\leq\frac{4\cdot\ln\left(\frac{n}{\kappa\cdot\epsilon}\right)}{\phi^{2}\left(\frac{A+A^{T}}{2}\right)}=\frac{4\cdot\ln\left(\frac{n}{\kappa\cdot\epsilon}\right)}{\phi^{2}(A)}
\]
where the second last inequality follows from Cheeger's inequality
\ref{thm:gen-Cheeger's-inequalities} for the symmetric matrix $(A+A^{T})/2$. 
\end{proof}

\subsection{\label{sec:Proof-of-mixt-lambda2}Proof of Lemma \ref{lem:mixt-lambda2}}
\begin{proof}
Since the mixing time (as shown in the proof of Lemma \ref{lem:mixt-phi-rel}
in Appendix \ref{sec:Proof-of-mixt-phi-rel}), eigenvalues, edge expansion,
and value of $\kappa$ for $R$ and $A=D_{u}^{\frac{1}{2}}D_{v}^{-\frac{1}{2}}RD_{u}^{-\frac{1}{2}}D_{v}^{\frac{1}{2}}$
are the same, we provide the bounds for $A$ and the bounds for $R$
follow. We also restrict to nonnegative vectors, the bound for general
vectors follows by the triangle inequality as discussed after Definition
\ref{def:gen-Mixing-time}.

The lower bound on $\tau_{\epsilon}(R)$ follows by combining the
lower bound on $\tau_{\epsilon}(R)$ in terms of $\phi(R)$ in Lemma
\ref{lem:mixt-phi-rel}, and the upper bound on $\phi(R)$ in terms
of $1-\text{Re}\lambda_{2}(R)$ in Theorem \ref{thm:gen-phi-lambda2}.
For the upper bound on $\tau_{\epsilon}(R)$, similar to the proof
of Lemma \ref{lem:mixt-singval}, we have for any nonnegative vector
$x$ with $\langle w,x\rangle=1$, for $A=w\cdot w^{T}+B$, 
\[
\|D_{w}(A^{\tau}-w\cdot w^{T})x\|_{1}\leq\sqrt{\frac{n}{\kappa}}\cdot\|B^{\tau}\|_{2}
\]
and having 
\[
\|B^{\tau}\|_{2}\leq\frac{\epsilon\sqrt{\kappa}}{\sqrt{n}}
\]
is sufficient. Let $T$ be the triangular matrix in the Schur form
of $B$, from the proof of Lemma \ref{lem:T_norm_1_bound} in Appendix
\ref{sec:Proof-of-T-norm-1-bound}, we have that for 
\[
k\geq\dfrac{3.51n+1.385\ln\left(\frac{n}{\delta}\right)}{1-|\lambda_{m}(A)|},
\]
the norm 
\[
\|B^{k}\|_{2}\leq\delta,
\]
and thus setting $\delta=\frac{\epsilon\sqrt{\kappa}}{\sqrt{n}}$,
we get that 
\[
\tau_{\epsilon}(A)\leq\dfrac{3.51n+1.385\ln\left(n\cdot\frac{\sqrt{n}}{\sqrt{\kappa}\cdot\epsilon}\right)}{1-|\lambda_{m}(A)|}.
\]
Further, since $A$ is $\frac{1}{2}$-lazy, $2A-I$ is also nonnegative,
with the same positive left and right eigenvector $w$ for PF eigenvalue
1, thus having largest singular value 1, and thus every eigenvalue
of $2A-I$ has magnitude at most 1. Similar to the proof of Lemma
\ref{lem:real-lambda-bound} in Appendix \ref{sec:Proof-of-real-lambda-bound},
if $\lambda_{r}=a+i\cdot b$ is any eigenvalue of $A$, the corresponding
eigenvalue in $2A-I$ is $2a-1+i\cdot2b$, whose magnitude is at most
1, giving $(2a-1)^{2}+4b^{2}\leq1$ or $a^{2}+b^{2}\leq a$. It further
gives that 
\[
1-|\lambda_{r}|=1-\sqrt{a^{2}+b^{2}}\geq1-\sqrt{1-(1-a)}\geq1-e^{\frac{1}{2}(1-a)}\geq\frac{1}{4}(1-a)
\]
or 
\[
1-|\lambda_{m}|\geq\frac{1}{4}(1-\text{Re}\lambda_{m})\geq\frac{1}{4}(1-\text{Re}\lambda_{2}),
\]
which gives 
\begin{align*}
\tau_{\epsilon}(A) & \leq4\cdot\dfrac{3.51n+1.385\ln\left(n\cdot\frac{\sqrt{n}}{\sqrt{\kappa}\cdot\epsilon}\right)}{1-\text{Re}\lambda_{2}(A)}\\
 & \leq20\cdot\frac{n+\ln\left(\dfrac{1}{\kappa\cdot\epsilon}\right)}{1-\text{Re}\lambda_{2}(A)}
\end{align*}
completing the proof. 
\end{proof}

\subsection{\label{sec:Proof-of-mixt-AAT}Proof of Lemma \ref{lem:tau-A-AAT}}
\begin{proof}
Since $\phi(A)=\phi(M)$ and each have the same left and right eigenvector
$w$ for PF eigenvalue 1, the lower bound on $\tau_{\epsilon}(A)$
in terms of $\tau_{\epsilon}(M)$ follows immediately from Lemma \ref{lem:mixt-phi-rel},
since 
\[
\sqrt{\tau_{\epsilon}(M)}\leq\ln^{\frac{1}{2}}\left(\frac{n}{\kappa\cdot\epsilon}\right)\cdot\frac{1}{\phi(M)}=\ln^{\frac{1}{2}}\left(\frac{n}{\kappa\cdot\epsilon}\right)\cdot\frac{1}{\phi(A)}\leq\ln^{\frac{1}{2}}\left(\frac{n}{\kappa\cdot\epsilon}\right)\cdot\frac{2\tau_{\epsilon}(A)}{1-2\epsilon}.
\]

For the upper bound, we first define a new quantity for \emph{positive
semidefinite }nonnegative matrices $M$ with PF eigenvalue 1 and $w$
as the corresponding left and right eigenvector. Let $T_{\epsilon}(M)$
be defined as the smallest number $k$ for which 
\[
\|M^{k}-w\cdot w^{T}\|_{2}=\epsilon.
\]

Since $M$ is symmetric, we can write $M=w\cdot w^{T}+UDU^{*}$ where
the first column of the unitary $U$ is $w$, and the diagonal matrix
$D$ contains all eigenvalues of $M$ except the eigenvalue 1 which
is replaced by 0. Further, since $M$ is positive semidefinite, $\lambda_{2}(M)$
is the second largest eigenvalue of $M$, then for every $i>2$, $0\leq\lambda_{i}(M)\leq\lambda_{2}(M)$.
Thus we have 
\[
\|M^{k}-w\cdot w^{T}\|_{2}=\|UD^{k}U^{*}\|_{2}=\|D^{k}\|_{2}=\lambda_{2}^{k}(M)
\]
and 
\[
T_{\epsilon}(M)=\frac{\ln\left(\frac{1}{\epsilon}\right)}{\ln\left(\frac{1}{\lambda_{2}(M)}\right)}.
\]
Further, for the eigenvector $y$ of $M$ corresponding to $\lambda_{2}$,
we have for $k=T_{\epsilon}(M)$, 
\[
(M^{k}-w\cdot w^{T})y=\lambda_{2}^{k}\cdot y=\epsilon\cdot y.
\]
Setting $x=\frac{y}{\|D_{w}y\|_{1}}$, we have $\|D_{w}x\|_{1}=1$,
and we get 
\[
\|D_{w}(M^{k}-w\cdot w^{T})x\|_{1}=\left\Vert D_{w}\frac{\epsilon\cdot y}{\|D_{w}y\|_{1}}\right\Vert _{1}=\epsilon,
\]
which implies that 
\begin{equation}
\tau_{\epsilon}(M)\geq k=T_{\epsilon}(M),\label{eq:proof-AAT-1}
\end{equation}
since there is some vector $x$ with $\|D_{w}x\|=1$ that has $\|D_{w}(M^{k}-w\cdot w^{T})x\|_{1}=\epsilon$
and for every $t<k$, it is also the case that $\|D_{w}(M^{t}-w\cdot w^{T})x\|_{1}>\epsilon$.

Now we observe that since $A$ is $\frac{1}{2}$-lazy with PF eigenvalue
1, and the same left and right eigenvector $w$ for eigenvalue 1,
we have that $M=\frac{1}{2}(A+A^{T})$ is positive semidefinite. We
continue to bound $\tau_{\epsilon}(A)$ similar to the proof method
(in Appendix \ref{sec:Proof-of-mixt-sing-bound}) used for Lemma \ref{lem:mixt-singval}.
For any $t$ and $x$ with $\|D_{w}x\|_{1}=1$, we have that 
\[
\|D_{w}(A^{t}-w\cdot w^{T})x\|_{1}\leq\sqrt{\frac{n}{\kappa}}\sigma_{2}(A)^{t}=\sqrt{\frac{n}{\kappa}}\sigma_{2}^{2}(A)^{\frac{t}{2}}
\]
and thus, 
\[
\tau_{\epsilon}(A)\leq\frac{2\cdot\ln\left(\frac{\sqrt{n}}{\sqrt{\kappa}\cdot\epsilon}\right)}{\ln\left(\frac{1}{\sigma_{2}^{2}(A)}\right)}
\]
and since $A$ is $\frac{1}{2}$-lazy, as shown in the proof of Lemma
\ref{lem:mixt-phi-rel} in Appendix \ref{sec:Proof-of-mixt-phi-rel},
\[
\sigma_{2}^{2}(A)=\lambda_{2}(AA^{T})\leq\lambda_{2}(M),
\]
giving 
\[
\tau_{\epsilon}(A)\leq\frac{2\cdot\ln\left(\frac{\sqrt{n}}{\sqrt{\kappa}\cdot\epsilon}\right)}{\ln\left(\frac{1}{\lambda_{2}(M)}\right)}=\frac{2\cdot\ln\left(\frac{\sqrt{n}}{\sqrt{\kappa}\cdot\epsilon}\right)}{\ln\left(\frac{1}{\epsilon}\right)}\cdot\frac{\ln\left(\frac{1}{\epsilon}\right)}{\ln\left(\frac{1}{\lambda_{2}(M)}\right)}=\frac{2\cdot\ln\left(\frac{\sqrt{n}}{\sqrt{\kappa}\cdot\epsilon}\right)}{\ln\left(\frac{1}{\epsilon}\right)}\cdot T_{\epsilon}(M)\leq\frac{2\cdot\ln\left(\frac{\sqrt{n}}{\sqrt{\kappa}\cdot\epsilon}\right)}{\ln\left(\frac{1}{\epsilon}\right)}\cdot\tau_{\epsilon}(M)
\]
where the last inequality followed from equation \ref{eq:proof-AAT-1}. 
\end{proof}

\subsection{\label{sec:Proof-of-mixt-contchain}Proof of Lemma \ref{lem:mixt-contchain}}
\begin{proof}
We will first find the mixing time of the operator $\exp\left(\frac{R-I}{2}\right)$,
and the mixing time of the operator $\exp(R-I)$ will simply be twice
this number. By expansion of the $\exp$ function, it follows that
$\exp\left(\frac{R-I}{2}\right)$ has PF eigenvalue 1, and $u$ and
$v$ as the corresponding left and right eigenvectors. Further,

\[
\exp\left(\frac{R-I}{2}\right)=e^{-\frac{1}{2}}\left(I+\sum_{i\geq1}\frac{1}{i!}\frac{R^{i}}{2^{i}}\right)
\]
which is $\frac{1}{2}$-lazy due to the first term since $e^{-\frac{1}{2}}\geq\frac{1}{2}$
and all the other terms are nonnegative. Further, for any set $S$
for which $\sum_{i\in S}u_{i}v_{i}\leq\frac{1}{2}$, let $\delta=\frac{1}{2}$,
then 
\begin{align*}
\phi_{S}\left(\exp\left(\frac{R-I}{2}\right)\right) & =e^{-\delta}\cdot\frac{\langle\mathbf{1}_{S},D_{u}\exp\left(\frac{R}{2}\right)D_{v}\mathbf{1}_{\overline{S}}\rangle}{\langle\mathbf{1}_{S},D_{u}D_{v}\mathbf{1}\rangle}\\
 & =e^{-\delta}\cdot\sum_{i\geq1}\frac{\delta^{i}}{i!}\cdot\frac{\langle\mathbf{1}_{S},D_{u}R^{i}D_{v}\mathbf{1}_{\overline{S}}\rangle}{\langle\mathbf{1}_{S},D_{u}D_{v}\mathbf{1}\rangle}\\
 & \ \ \ \ \text{[since \ensuremath{\langle\mathbf{1}_{S},D_{u}ID_{v}\mathbf{1}_{\overline{S}}\rangle=0}]}\\
 & =e^{-\delta}\cdot\sum_{i\geq1}\frac{\delta^{i}}{i!}\cdot\phi_{S}(R^{i})\\
 & \ \ \ \ \text{[since \ensuremath{R^{i}} also has \ensuremath{u} and \ensuremath{v} as the left and right eigenvectors for eigenvalue 1]}\\
 & \leq e^{-\delta}\cdot\sum_{i\geq1}\frac{\delta^{i}}{i!}\cdot i\cdot\phi_{S}(R)\\
 & \ \ \ \ \text{[using Lemma \ref{lem:pow_bound}]}\\
 & =e^{-\delta}\cdot\delta\cdot\phi_{S}(R)\sum_{i\geq1}\frac{\delta^{i-1}}{(i-1)!}\\
 & =e^{-\delta}\cdot\delta\cdot\phi_{S}(R)\cdot e^{\delta}\\
 & =\delta\cdot\phi_{S}(R)
\end{align*}
and thus, 
\begin{equation}
\phi\left(\exp\left(\frac{R-I}{2}\right)\right)\leq\frac{1}{2}\phi(R).\label{eq:mixt-contchain-1}
\end{equation}
Moreover, considering the first term in the expansion, we get 
\begin{align*}
\phi_{S}\left(\exp\left(\frac{R-I}{2}\right)\right) & =e^{-\delta}\sum_{i\geq1}\frac{\delta^{i}}{i!}\cdot\phi_{S}(R^{i})\geq e^{-\delta}\cdot\delta\cdot\phi_{S}(R)
\end{align*}
or 
\begin{equation}
\phi\left(\exp\left(\frac{R-I}{2}\right)\right)\geq\frac{3}{10}\cdot\phi(R).\label{eq:mixt-contchain-2}
\end{equation}
Since $\exp\left(\frac{R-I}{2}\right)$ has left and right eigenvectors
$u$ and $v$, and is $\frac{1}{2}$-lazy, we get from Lemma \ref{lem:mixt-phi-rel}
and equations \ref{eq:mixt-contchain-1} and \ref{eq:mixt-contchain-2}
that 
\[
\frac{1}{2}\cdot\frac{\frac{1}{2}-\epsilon}{\frac{1}{2}\phi(R)}\leq\tau_{\epsilon}\left(\exp(R-I)\right)\leq2\cdot\frac{4\cdot\ln\left(\frac{n}{\kappa\cdot\epsilon}\right)}{\left(\frac{3}{10}\right)^{2}\phi^{2}(R)}\leq\frac{100\cdot\ln\left(\frac{n}{\kappa\cdot\epsilon}\right)}{\phi^{2}(R)}
\]
giving the result. 
\end{proof}

\section{\label{sec:Intuition-behind-the-construction}Intuition behind the
construction }

The main idea behind the construction in Section \ref{sec:Construction-of-non-expanding}
is to systematically reduce the space of doubly stochastic matrices
under consideration, while ensuring that we still have matrices that
we care about in our space. We explain some of the key ideas that
led to the construction.\\

\noindent \textbf{All eigenvalues 0. }Consider de Bruijn graphs. As
stated in Section \ref{subsec:Known-Constructions}, all their nontrivial
eigenvalues are 0, yet they have small expansion. This suggests the
question whether de Bruijn matrices have the \emph{minimum} expansion
amongst \emph{all} doubly stochastic matrices that have eigenvalue
$0$ (which turned out to be far from the case). As we see in the
first steps in the proof of Lemma \ref{lem:T_norm_1_bound} in Appendix
\ref{sec:Proof-of-T-norm-1-bound}, if for a doubly stochastic matrix
$A$, every nontrivial eigenvalue has magnitude at most $1-c$ for
some constant $c$, then powering just $O(\log n)$ times will make
the diagonal entries inverse polynomially small in magnitude, and
thus it would seem that the matrix should have behavior similar to
matrices with all eigenvalues 0. %
{} Thus, the starting point of the construction is to consider only
doubly stochastic matrices that have all eigenvalues 0. Note that
this also helps us to restrict to real (i.e., orthogonal) diagonalizing
unitaries $U$, and real triangular matrices $T$ in the Schur form.
\\

\noindent \textbf{Schur block that looks like a Jordan block.} The
next observation again comes from the proof of Lemma \ref{lem:T_norm_1_bound}
(in Appendix \ref{sec:Proof-of-T-norm-1-bound}). Note that after
writing $A=J+B$, the main idea was to observe that if $B^{m}\approx0$,
then $\phi(A^{m})\approx\phi(J)$. Since $B^{m}=UT^{m}U^{*}$ where
$T$ is upper triangular, this is equivalent to requiring that $T^{m}\approx0$.
As we have assumed that all nontrivial eigenvalues of $T$ are 0,
it means the entire diagonal of $T$ is 0. How large does $m$ have
to be to ensure that $T^{m}\approx0$? It is shown in Lemma \ref{lem:T_norm_1_bound}
that for $m\approx O(n)$, $T^{m}\approx0$. Observe that for Lemma
\ref{lem:T_norm_1_bound}, $\Omega(n)$ is indeed necessary, considering
the case of nilpotent $T$, with all 0 entries, and 1 on every entry
above the diagonal. But can a doubly stochastic matrix have such a
Schur form? Such matrices would at least be sufficient on the outset,
to ensure that Lemma \ref{lem:T_norm_1_bound} is tight, hopefully
giving us the kind of bound we need. Thus, our second restriction
is the following: we assume that every entry of $T$ is 0, and every
off-diagonal entry (entries $T(i,i+1)$ for $2\leq i\leq n-1$) is
$r$. \textit{\emph{We want to }}\textit{maximize}\textit{\emph{ $r$
and still have some unitary that transforms $T$ to a doubly stochastic
matrix}}\textit{.} Note that $T$ will have only 0 in the entries
of the first row and column since we have diagonalized-out the projection
onto the stochastic eigenvector (i.e., $J$). In any upper triangular
matrix $T$ in which the diagonal has zeros, the off-diagonal entries
\emph{affect} the entries in powers of $T$ the most, since entries
far from the diagonal will become ineffective after a few powers of
$T$. Thus, choosing $T$ with the non-zeros pattern of a Jordan block
will not be far from optimal. \\

\noindent \textbf{Restricting to hermitian unitaries with a uniform
row vector.} Next we make several observations about the unitary $U$,
where $A=J+UTU^{*}$. Note that the first column of $U$ is the vector
$\frac{1}{\sqrt{n}}\mathbf{1}$, since it is an eigenvector of $A$.
The choice of $U$ constrains the value of $r$ that we can use in
$T$ since it must lead to $A$ becoming a doubly stochastic matrix.
From considering the optimal $3\times3$ and $4\times4$ cases, which
although, clearly, do not give any information about relation between
$\phi$ and $\text{Re}\lambda_{2}$ or $\Gamma$ (since it could be
off by large constant factors), we note that they do give information
about the \emph{unitary} $U,$ since it is a \emph{rotation} and is
not a numeric value like $\phi$ or $\text{Re}\lambda_{2}$. From
these cases, we make the crucial (albeit empirical) observation: the
\emph{direction} in which $r$ can be maximized (given our constraints)
is such that the first \emph{row} of $U$ is also $\frac{1}{\sqrt{n}}\mathbf{1}$,
and this is the second constraint we impose on $U$. Given that the
unitaries have the \emph{same }vector in the first row and column,
our next observation (again from considering certain specific cases)
is that since the unitary preserves the all $1$'s eigenvector, and
since $T$ has \emph{exactly the same value }above the diagonal by
construction and all other entries are 0, $U$ is performing only
2 types of \emph{actions }on $T$. This type of feature is provided
by unitaries that are hermitian, and this is the next assumption we
make -- we restrict to unitaries that are hermitian. \\

\noindent \textbf{Optimizing for $\phi$ and $r$. }The restrictions
adopted so far on the unitary $U$ and the Schur matrix $T$ imply
a sequence of inequalities to ensure that $A=J+UTU^{*}$ is doubly
stochastic. Subject to the constraints provided by these inequalities,
we aim to \emph{minimize} $\phi$ and \emph{maximize} $r$. Due to
our restrictions on $U$, the cut $S=\{1\}$ in the resulting matrix
$A$ is special, and we aim to \emph{minimize} the edge expansion
$1-A_{1,1}$ of this cut. With the set of possible values that $r$
can take, we note that a set of extreme points of the resulting optimization
problem of minimizing $1-A_{1,1}$ or maximizing $A_{1,1}$ are obtained
if we \emph{force} the values of all the entries $A_{1,i}$ for $3\leq i\leq n$
to 0. We then maximize $r$ for the resulting matrix (indeed, there
are exactly two possible doubly stochastic matrices at this point),
and the result is the construction given in Section \ref{sec:Construction-of-non-expanding}.
\end{document}